\renewcommand{\mathbb}{\mathds}
\theoremstyle{plain}
\newtheorem{thm}{Theorem}
\newtheorem{lem}[thm]{Lemma}
\newtheorem{cor}[thm]{Corollary}
\newtheorem{prop}[thm]{Proposition}
\theoremstyle{remark}
\newtheorem{rem}{Remark}
\newtheorem{exm}{Example}
\theoremstyle{definition}
\newtheorem{defn}{Definition}
\newtheorem{assu}{Assumption}
\newtheorem*{assu*}{Assumption}
\DeclareMathOperator{\Expect}{\mathbb E}
\DeclareMathOperator{\Law}{Law}
\DeclareMathOperator{\Var}{Var}
\DeclareMathOperator{\refc}{rc}
\DeclareMathOperator{\sync}{sc}
\DeclareMathOperator{\Id}{Id}
\newcommand{\dd}{\mathop{}\!\mathrm{d}}
\newcommand{\1}{\mathbb{1}}
\newcommand{\din}{d_\textnormal{in}}
\title{Self-interacting approximation\\
to McKean--Vlasov long-time limit:\\
a Markov chain Monte Carlo method}
\author[1,2]{Kai Du}
\author[3]{Zhenjie Ren}
\author[4]{Florin Suciu}
\author[5]{Songbo Wang}
\affil[1]{Shanghai Center for Mathematical Sciences,
Fudan University, Shanghai, China}
\affil[2]{Shanghai Artificial Intelligence Laboratory, Shanghai, China}
\affil[3]{LaMME, Université d'Évry, Université Paris-Saclay, Évry, France}
\affil[4]{CEREMADE, Université Paris-Dauphine, Université PSL, Paris, France}
\affil[5]{CMAP, École polytechnique, IP Paris, Palaiseau, France}
\begin{document}

\maketitle

\begin{abstract}
For a certain class of McKean--Vlasov processes,
we introduce proxy processes that substitute
the mean-field interaction with self-interaction,
employing a weighted occupation measure.
Our study encompasses two key achievements.
First, we demonstrate the ergodicity of the self-interacting dynamics,
under broad conditions, by applying the reflection coupling method.
Second, in scenarios where the drifts are negative intrinsic gradients
of convex mean-field potential functionals,
we use entropy and functional inequalities to demonstrate that
the stationary measures of the self-interacting processes approximate
the invariant measures of the corresponding McKean--Vlasov processes.
As an application, we show how to learn the optimal weights
of a two-layer neural network by training
a single neuron.
\end{abstract}

\section{Introduction}

In this paper we develop a novel method to approximate the invariant measure
of the non-degenerate McKean--Vlasov dynamics
\begin{equation}
\label{eq:mkv}
\dd X_t = b\bigl(\Law(X_t), X_t\bigr) \dd t + \dd B_t,
\end{equation}
where $B$ is a standard Brownian motion in $\mathbb{R}^d$.
The McKean--Vlasov dynamics characterize
the mean field limit of interacting particles, and they
have widespread applications, encompassing fields such as
granular materials \cite{BCCPGranular, BGGGranular, CMVKinetic},
mathematical biology \cite{KellerSegelChemotaxis},
statistical mechanics \cite{MartzelAslangul}, and synchronization
of oscillators \cite{KuramotoOscillators}.
More recently, there has been a growing interest in the role of such dynamics
in the context of mean field optimization
for training neural networks
\cite{MMNMF,ChizatBachGlobal,HRSS,NWSPDA,%
NWSConvexMFL,ChizatMFL,CKRGame}.

In order to simulate the invariant measure of \eqref{eq:mkv},
we turn to the corresponding $N$-particle approximation, i.e., the dynamics
\begin{equation}
\label{eq:mkv-ps}
\dd X^i_t = b\biggl(\frac1N\sum_{i=1}^N \delta_{X^i_t}, X^i_t\biggr) \dd t
+ \dd B^i_t,
\qquad\text{for}~i = 1,\ldots,N,
\end{equation}
where $(B^i)_{1\leqslant i \leqslant N}$ are independent Brownian motions.
It is expected that the empirical measure $\frac1N\sum_{i=1}^N \delta_{X^i_t}$
of the $N$-particle system can approximate the McKean--Vlasov long-time limit
when $N$ and $t$ are both large enough.
For fixed $N$, to ensure control over the distance between
the McKean--Vlasov dynamics in \eqref{eq:mkv}
and the $N$-particle system in \eqref{eq:mkv-ps},
throughout the entire time horizon,
the literature has proposed various uniform-in-time propagation of chaos
results under different scenarios, see for example,
\cite{GuillinMonmarcheKinetic, DEGZElementary,
DelarueTseUniformPOC, LLFSharp, ulpoc}.

When the drift $b$ does not depend on the marginal distribution, $\Law(X_t)$,
the diffusion process is Markovian. Under general conditions,
we can leverage Birkhoff's ergodic theorem \cite{BirkhoffErgodic}
to approximate its invariant measure using the occupation measure
\[\overline m_t \coloneqq \frac1t \int_0^t \delta_{X_s}\dd s,\]
as $t\rightarrow\infty$.
In scenarios where the drift takes the form of gradient $b(x) = -\nabla U(x)$,
this ergodic property of the Markov diffusion lays the groundwork
for various Markov chain Monte Carlo methods,
such as Metropolis adjusted Langevin algorithm
\cite{RobertsTweedie,RobertsRosenthal}
and unadjusted Langevin algorithm \cite{DurmusMoulinesNonasymptotic}.
Motivated by the Markovian ergodicity, the recent paper \cite{DJLEmpApprox}
studied the following self-interacting process:
\begin{equation}
\label{eq:uniform-si}
\dd X_t = b(\overline m_t, X_t) \dd t + \dd B_t,
\end{equation}
where the dependence on the marginal distribution
in McKean-Vlasov diffusion \eqref{eq:mkv}
is replaced by the occupation measure,
that is, the empirical mean of the trajectory $(X_s)_{s \in [0,t]}$.
In \cite{DJLEmpApprox}, the authors successfully demonstrated that,
in the regime of weak interaction,
where the dependence on the marginal distribution is sufficiently small,
the occupation measures $(\overline m_t)_{t\geqslant 0}$
of the self-interacting process \eqref{eq:uniform-si}
also converge towards the invariant measure of \eqref{eq:mkv}
as $t\rightarrow\infty$.
Remarkably, from a practical point of view,
simulating the occupation measure of the self-interacting process
\eqref{eq:uniform-si} only requires a single particle,
which distinguishes it from the conventional $N$-particle approximation
\eqref{eq:mkv-ps}.
It is worth noting that the investigation into the long-time behavior
of the self-interacting diffusions can be traced back
to the pioneering works of Cranston and Le Jan
\cite{CranstonLeJanSIDiffusions}
and Raimond \cite{RaimondSIDiffusions}.

Building upon the discovery in \cite{DJLEmpApprox},
this paper ventures into uncharted territory,
where the mean field interaction need not to be inherently weak.
We propose to study the self-interacting particle
with exponentially decaying dependence on its trajectory:
\begin{equation}
\label{eq:si}
\begin{aligned}
\dd X_t &= b ( m_t, X_t ) \dd t + \dd B_t, \\
\dd m_t &= \lambda(\delta_{X_t} - m_t) \dd t,
\end{aligned}
\end{equation}
where $\lambda$ is a positive constant.
Integrating the second equation of \eqref{eq:si}, we find
\[
m_t = e^{-\lambda t} m_0
+ \int_0^t \lambda e^{-\lambda (t-s)} \delta_{X_s} \dd s,
\]
that is to say, the measure $m_t$ is
an exponentially weighted occupation measure with emphasis on the recent past.
The dynamics \eqref{eq:si} is a time-homogeneous Markov process
and we show its exponential ergodicity in the first part of the paper.
Although the state space where the random variable $(X_t, m_t)$ lives
is infinite-dimensional,
we have a non-degenerate noise in the $X$ component
and an almost sure contraction in the $m$ component,
which render such ergodicity possible.
Under suitable conditions for the drift $b$,
we show in Theorem~\ref{thm:si-contraction}
by a \emph{reflection coupling} approach that
the Markov process is contractive for a Wasserstein distance.
This implies that the stationary measure $\rho^\lambda$
for the Markov process exists, and is unique and globally attractive.
Notably, the conditions that we impose on $b$ do not imply
the uniqueness of invariant measure for the McKean--Vlasov
\eqref{eq:mkv},
and cover the case of the ferromagnetic Curie--Weiss model
in Section~\ref{sec:curie-weiss}.
Here, we also remark that
the exact weight for the measure $m_t$ is not important
and we work with a time-homogeneous Markov structure
only for convenience.
We could, for example, take the alternative weighting
\[
\frac{\lambda}{1 - e^{-\lambda t}} \int_0^t e^{-\lambda(t-s)}\delta_{X_s}\dd s
\]
to remove the dependency on the initial value.

We then proceed to investigate properties of the stationary $\rho^\lambda$
in Theorem~\ref{thm:dist-si-stationary-mkv-invariant}.
A crucial feature of the self-interacting process \eqref{eq:si}
is that when $\lambda \to 0$, the dynamics of the measure $m_t$ becomes slow,
while the rate of the $X_t$ dynamics remains roughly unchanged.
Suppose additionally that for some
mean field energy functional $F : \mathcal P(\mathbb R^d) \to \mathbb R$,
the drift is its negative intrinsic gradient:
\[
b(m,x) = -D_m F(m,x) = - \nabla_x \frac{\delta F}{\delta m}(m,x).
\]
The double time-scale structure allows us to speculate that
$\Law(X_t)$ rapidly relaxes to the local-in-time equilibrium
\[
\hat m_t \coloneqq
\frac{\exp \bigl( - 2 \frac{\delta F}{\delta m}(m_t,x)\bigr)\dd x}
{\int\exp \bigl( - 2 \frac{\delta F}{\delta m}(m_t,x')\bigr)\dd x'}
\]
so that the measure evolution is effectively described by
\begin{equation}
\label{eq:efp}
\partial_t m_t = \lambda (\hat m_t - m_t).
\end{equation}
This dynamics is called \emph{entropic fictitious play}
in a previous work of F.~Chen and two of the authors \cite{efp}
and this point of view plays a key role in various literatures,
notably the series of works of Benaïm, Ledoux and Raimond
\cite{BLRSIDiffusions,BRSIDiffusions2,BRSIDiffusions3,BRSIDiffusions4}
and the article of Kleptsyn and Kurtzmann \cite{KKErgodicitySI}.
The main novelty of our method is that we provide a \emph{quantitative}
justification of the intuition presented above,
and we are no longer restricted to the case of two-body interaction potentials.
More precisely, letting $(X,m)$ be a random variable
distributed as the stationary measure $\rho^\lambda$,
we provide an entropy bound
in Proposition~\ref{prop:rho-hat-rho-entropy}
that measures in a way the distance between
the conditional distribution $\Law(X|m)$ and $\hat m$,
relying crucially on the log-Sobolev inequality for $\hat m$.
This method requires unfortunately a finite-dimensional dependency
of the mean field in the energy functional $F$,
which we explain in Remark~\ref{rem:finite-dim} in detail.
The entropy bound,
together with an inherent gradient structure of the dynamics,
is then used in the rest of the proof of
Theorem~\ref{thm:dist-si-stationary-mkv-invariant}
to show that the random measure $m$ solves approximately
the stationary condition for the entropic fictitious play \eqref{eq:efp}:
\[
\hat m = m.
\]
In the case that the energy $F$ is convex,
the equation above has a unique solution $m_*$, which is also
the invariant measure for the McKean--Vlasov dynamics \eqref{eq:mkv},
and we show that the stationary measure $\rho^\lambda$
is in fact close to $m_* \otimes \delta_{m_*}$ for small $\lambda$.

The self-interacting dynamics \eqref{eq:si}
can be also thought as an intermediate scheme
between the entropic fictitious play \eqref{eq:efp},
which corresponds to the limit $\lambda \to 0$,
and the linear dynamics
\[
\dd X_t = b(\delta_{X_t}, X_t) \dd t + \dd B_t,
\]
which corresponds to the limit $\lambda \to \infty$.
From a computational point of view,
the linear dynamics is easy to sample and relaxes rapidly,
but in the long time does not yield the McKean--Vlasov's long-time limit.
The entropic fictitious play reaches high precision in the long time,
but at each time step, it requires usually a costly Monte Carlo run
to sample the Gibbs measure $\hat m_t$.
The self-interacting dynamics lies exactly in between
by mixing the two time scales.

As a final note on the terminology,
although the words ``stationary'' and ``invariant'' have almost identical
meanings in the context of stochastic process,
we always say ``invariant measure'' when referring to the McKean--Vlasov
process \eqref{eq:mkv},
and ``stationary measure'' when referring to the self-interacting process
\eqref{eq:si}.
We hope this artificial distinction would reduce possible confusions
for the readers.

\bigskip

The rest of the paper is organized as follows.
The main results are stated in Section~\ref{sec:main-results}.
Before moving to their proofs, we apply our results
to the training of a two-layer neural network in Section~\ref{sec:numerical}
and to a Curie--Weiss model for ferromagnets in Section~\ref{sec:curie-weiss}.
Ergodicity of the self-interacting dynamics,
i.e., Theorem~\ref{thm:si-contraction},
is proved in Section~\ref{sec:si-contraction}.
In Section~\ref{sec:gd},
we prove Theorem~\ref{thm:dist-si-stationary-mkv-invariant},
which characterizes the stationary measure
of the self-interacting process.
Finally, a technical result and its proof, and the numerical algorithm
are included in the appendices.

\section{Main results}
\label{sec:main-results}

We state and discuss our main results in this section.
First, we study the contractivity of the self-interacting process \eqref{eq:si},
and in particular, the exponential convergence
to its unique stationary measure is obtained.
Then, focusing on the gradient case,
we quantify the distance between the self-interacting stationary measure
and the corresponding McKean--Vlasov invariant measure.
Finally, applying both the results, we propose an annealing scheme
so that the self-interacting dynamics
converges to the McKean--Vlasov invariant measure.

To avoid extra assumptions on the drift functional $b$,
we will always assume the existence and the uniqueness of strong solution
to the self-interacting process \eqref{eq:si}
without explicitly mentioning it in the rest of the paper.

\begin{assu*}
Given any filtered probability space
supporting a Brownian motion $(B_t)_{t \geqslant 0}$
and satisfying the usual conditions,
for any initial conditions $(X_0, m_0)$
measurable to the initial $\sigma$-algebra
and taking value in $\mathbb R^d \times \mathcal P(\mathbb R^d)$,
there exists a unique adapted stochastic process $(X_t, m_t)_{t \geqslant 0}$
such that for all $t \geqslant 0$,
\begin{align*}
X_t &= \int_0^t b(m_s, X_s) \dd s + B_t + X_0, \\
m_t &= \lambda \int_0^t (\delta_{X_s} - m_s) \dd s + m_0.
\end{align*}
\end{assu*}

One may easily find various sufficient conditions for the assumption above.
For example, if
$b : \mathcal P(\mathbb R^d) \times \mathbb R^d \to \mathbb R^d$
is $W_1$-Lipschitz continuous in measure and Lipschitz continuous in space,
then by Cauchy--Lipschitz arguments, we know that the assumption is satisfied.

\subsection{Contractivity of the self-interacting diffusion}

We first present the results on the contractivity of
the self-interacting dynamics \eqref{eq:si},
from which follows the convergence to its unique stationary measure.
We start with two basic definitions. First, we define moduli of continuity.

\begin{defn}[Modulus of continuity]
\label{defn:moc}
We say that $\omega : [0,\infty) \to [0,\infty)$
is a modulus of continuity if it satisfies the following properties:
\begin{itemize}
\item $\omega(0) = 0$;
\item $\omega$ is continuous and non-decreasing;
\item for all $h$, $h' \geqslant 0$, we have
$\omega(h+h') \leqslant \omega(h) + \omega(h')$.
\end{itemize}
\end{defn}

Note that a modulus of continuity necessarily has at most linear growth
according to our definition.
We also introduce the notion of semi-monotonicity
following Eberle \cite{EberleReflectionCoupling}.

\begin{defn}[Semi-monotonicity]
\label{defn:semi-monotone}
We say that $\kappa : (0, \infty) \to \mathbb R$
is a semi-monotonicity function
for a vector field $v : \mathbb R^d \to \mathbb R^d$ if
\[
\bigl( v(x) - v(x') \bigr) \cdot (x - x')
\leqslant - \kappa(|x - x'|) |x - x'|^2
\]
holds for every $x$, $x' \in \mathbb R^d$ with $x' \neq x$.
We say $\kappa$ is a \emph{uniform} semi-monotonicity function
of a family of vector fields if it is a semi-monotonicity function
of each member.
\end{defn}

In this subsection, we impose the following assumption on the drift
of the McKean--Vlasov dynamics \eqref{eq:mkv}.

\begin{assu}
\label{assu:si-contraction}
The drift $b$ satisfies the following conditions:
\begin{enumerate}
\item For any fixed $m \in \mathcal P(\mathbb R^d)$,
the vector field $x \mapsto b(m,x)$ is uniformly equicontinuous
and has a uniform semi-monotonicity function $\kappa_b$, given by
$\kappa_b (x) = \kappa_0 - M_b/x$
for some $\kappa_0 > 0$ and $M_b \geqslant 0$.
\item There exist a bounded modulus of continuity
$\omega : [0,\infty)\to[0, M_\omega]$
and a constant $L \geqslant 0$ such that
\[
|b(m,x) - b(m',x)| \leqslant LW_\omega(m,m')
\]
for every $m$, $m' \in \mathcal P(\mathbb R^d)$
and every $x \in \mathbb R^d$.
Here $W_\omega$ is the Wasserstein distance
\[
W_\omega(m,m') = \inf_{\pi \in \Pi(m, m')} \int \omega(\lvert x - x'\rvert)
\pi (\dd x\dd x').
\]
\end{enumerate}
\end{assu}

Using reflection coupling, we derive the following result.
\begin{thm}
\label{thm:si-contraction}
Suppose Assumption~\ref{assu:si-contraction} hold.
Let $(X_t, m_t)_{t \geqslant 0}$, $(X'_t, m'_t)_{t \geqslant 0}$
be two processes following the dynamics \eqref{eq:si} for some $\lambda > 0$
such that the first marginals of their initial values $X_0$, $X'_0$
have finite first moments.
Define the following metric on $\mathbb R^d \times \mathcal P(\mathbb R^d)$:
\[
d_{\lambda} \bigl( (x,m), (x',m') \bigr)
= \lvert x - x' \rvert + \frac{2L}{\lambda} W_\omega(m, m')
\]
and denote the corresponding Wasserstein distance on
$\mathcal P_1\bigl(\mathbb R^d \times \mathcal P(\mathbb R^d)\bigr)$
by $W_{d_{\lambda}}$.
Then, we have
\[
W_{d_{\lambda}} \bigl( \Law(X_t, m_t), \Law(X'_t, m'_t) \bigr)
\leqslant
C e^{-ct} W_{d_{\lambda}} \bigl( \Law(X_0, m_0), \Law(X'_0, m'_0) \bigr),
\]
where the constants $C$, $c$ are given by
\begin{align*}
C &= 1 + \frac{2M}{\sqrt{K_0}} \exp \biggl( \frac{M^2}{4K_0} \biggr), \\
c &= \Biggl( \frac 1{K_0} + \frac{2M}{K_0^{3/2}}
\exp \biggl( \frac{M^2}{4K_0} \biggr) \Biggr)^{\!-1}
\end{align*}
for $M = M_b + 2L M_\omega$
and $K_0 = \min \bigl(\kappa_0, \frac{\lambda}{2}\bigr)$.
\end{thm}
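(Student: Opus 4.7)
The plan is to couple the $X$-components by Eberle's reflection trick while tracking the $m$-components by a deterministic comparison in the space of measures. First, on a single filtered probability space carrying a Brownian motion $B$, drive $(X,m)$ by $B$ and drive $(X',m')$ by the reflected Brownian motion $\dd B'_t = (I - 2 e_t e_t^{\top}) \dd B_t$ with $e_t = (X_t - X'_t)/|X_t - X'_t|$ before the meeting time $\tau$, and synchronously afterwards. Writing $r_t = |X_t - X'_t|$, Itô's formula gives, before $\tau$,
\[
\dd r_t = e_t \cdot \bigl[b(m_t, X_t) - b(m'_t, X'_t)\bigr] \dd t + 2 \dd W_t
\]
for a scalar Brownian motion $W$; by the semi-monotonicity and the measure-Lipschitz assumption the radial drift is bounded by $-\kappa_b(r_t) r_t + L W_\omega(m_t, m'_t)$.

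For the measure component I would use the mild formulation $m_t = e^{-\lambda t} m_0 + \int_0^t \lambda e^{-\lambda(t-s)} \delta_{X_s} \dd s$ combined with the convexity of $W_\omega$ under mixtures and the subadditivity of $\omega$ to obtain the pathwise upper envelope
\[
W_\omega(m_t, m'_t) \leqslant \tilde W_t := e^{-\lambda t} W_\omega(m_0, m'_0) + \int_0^t \lambda e^{-\lambda(t-s)} \omega(r_s) \dd s,
\]
where $\tilde W$ itself solves the linear ODE $\dd \tilde W_t = \lambda(\omega(r_t) - \tilde W_t)\dd t$. Replacing $W_\omega(m_t,m'_t)$ by the smoother $\tilde W_t$ is what allows classical Itô calculus to proceed.

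I would then follow Eberle to construct a concave $C^2$ function $f : [0, \infty) \to [0, \infty)$ with $f(0) = 0$ and $\phi_{\min} \leqslant f'(r) \leqslant 1$ such that
\[
2 f''(r) - K_0 r f'(r) + M \leqslant - c f(r), \qquad r > 0,
\]
where $K_0 = \min(\kappa_0, \lambda/2)$, $M = M_b + 2LM_\omega$, and $c$ is the rate of the theorem; the constant $M$ absorbs both the non-monotone contribution $M_b$ of $\kappa_b$ and the uniform bound $2L M_\omega$ on $2L\omega(r)$. Set $\Psi_t = f(r_t) + (2L/\lambda)\tilde W_t$. Applying Itô to $f(r_t)$ and combining with the ODE for $\tilde W_t$, using $\kappa_b(r)r \geqslant K_0 r - M_b$, $f' \leqslant 1$, $\omega \leqslant M_\omega$ and $W_\omega(m_t,m'_t) \leqslant \tilde W_t$, the feedback terms cancel into
\[
\dd \Psi_t \leqslant -c\,\Psi_t \dd t + 2 f'(r_t) \1_{t < \tau} \dd W_t,
\]
the condition $c \leqslant \lambda/2$ being precisely what matches the $-L \tilde W_t$ leftover to $-c (2L/\lambda)\tilde W_t$. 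Taking expectations yields $\Expect \Psi_t \leqslant e^{-ct} \Expect \Psi_0$. The sandwich $\phi_{\min}(r + (2L/\lambda)\tilde W) \leqslant \Psi \leqslant r + (2L/\lambda)\tilde W$, together with $W_\omega(m_t,m'_t) \leqslant \tilde W_t$ and $\tilde W_0 = W_\omega(m_0,m'_0)$, then converts this into the metric $d_\lambda$ up to the factor $C = 1/\phi_{\min}$; passing to the infimum over couplings of the initial laws delivers the claimed $W_{d_\lambda}$-contraction.

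The hard part is the construction of $f$ and the tight tracking of its constants. Eberle's intrinsic rate is throttled by the drift magnitude $M$ at the critical radius $R_0 = M/K_0$ where the semi-monotonicity turns negative, while the external rate $\lambda/2$ coming from $\tilde W$ caps $c$ at $\lambda/2$; the choice $K_0 = \min(\kappa_0, \lambda/2)$ encodes exactly this trade-off. The explicit constants $c$ and $C$ in the theorem arise from evaluating Eberle's exponential weight, essentially $\phi(r) = \exp(K_0 r^2/4 - M r/2)$, together with its integral on $[0, R_0]$, which produces the factor $\exp(M^2/(4K_0))$ in both $C$ and $c$.
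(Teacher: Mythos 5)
Your overall architecture (Eberle-type concave $f$, exponentially weighted tracking of the measure distance, combined Lyapunov functional, extraction of the constants $K_0 = \min(\kappa_0,\lambda/2)$ and $M = M_b + 2LM_\omega$) is the right one, and your decomposition $\Psi_t = f(|\delta X_t|) + \tfrac{2L}{\lambda}\tilde W_t$ is an admissible alternative to the paper's choice $f\bigl(|\delta X_t| + \tfrac{2L}{\lambda}W_\omega(m_t,m'_t)\bigr)$; both can deliver rates of the claimed form. But there is a genuine gap in the coupling.

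You use a hard-cutoff reflection coupling: full reflection before the meeting time $\tau$, synchronous afterwards. Unlike the purely Markovian case, the $X$-processes do \emph{not} stay coalesced after $\tau$, because the drifts $b(m_t,\cdot)$ and $b(m'_t,\cdot)$ still differ as long as $m_t \neq m'_t$. Once $r_t = |X_t - X'_t|$ becomes positive again under synchronous coupling, there is no reflection noise, hence no $2f''(r_t)\,\dd t$ term, and the concave function $f$ alone cannot absorb the constant $M_b$ coming from the semi-monotonicity $\kappa_b(r) = \kappa_0 - M_b/r$: for small $r > 0$ one gets a positive leftover of order $M_b$ on the right-hand side, and the claimed inequality $\dd\Psi_t \leqslant -c\Psi_t\,\dd t + \text{mart.}$ fails. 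Re-igniting reflection each time $r$ moves away from zero requires a concatenation-of-stopping-times argument (or, as the paper does, a smooth reflection/synchronous interpolation $\refc,\sync$ depending on a parameter $n$, with the limit $n\to\infty$ taken at the end); you do not address this.

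Related to this, you never invoke the uniform equicontinuity of $x\mapsto b(m,x)$, which is the first bullet of Assumption~\ref{assu:si-contraction}. The paper needs it precisely in the region where the reflection intensity is not full ($|\delta X_t| < 2n^{-1}$), to replace the semi-monotonicity bound $-\kappa_0 r + M_b$ (which stays bounded away from zero) by $\omega_b(|\delta X_t|) = o(1)$ and hand control to the $-\tfrac{\lambda}{2}$ contraction coming from the $m$-component. Without this, there is no way to close the estimate near $r = 0$. Since your argument never distinguishes the small-$r$ regime from the large-$r$ regime, this omission is the concrete place where the proof breaks.

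Two small further notes. First, to reproduce the theorem's constants you must build $f$ from the \emph{combined} rate $K_0 = \min(\kappa_0,\lambda/2)$ even though in your splitting only $\kappa_0$ appears in the $X$-drift; otherwise you would obtain a different (possibly better but mismatching) prefactor. Second, your asserted sandwich $f'\leqslant 1$ and $C = 1/\phi_{\min}$ does not match the paper's normalization: the paper's $f$ has $f'(0)$ possibly much larger than $1$, and $C = K_0 f'(0) = 1 + \tfrac{2M}{\sqrt{K_0}}\exp(M^2/4K_0)$; a renormalized $f$ would change the rate extracted from the ODE. These are bookkeeping issues, but you would need to redo the constant tracking carefully once the coupling near $r=0$ is fixed.
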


The proof of Theorem~\ref{thm:si-contraction} is postponed
to Section~\ref{sec:si-contraction}.

\begin{rem}[On the assumption]
\label{rem:assum-si-contraction}
The first condition on the semi-monotonicity
of the vector field $x \mapsto b(m,x)$
is stronger than those used in \cite{EberleReflectionCoupling},
in that we require a more gentle singularity in $\kappa(x)$
for $x$ close to $0$.
This is because in this work, we are concerned with a good convergence rate
when the parameter $\lambda \to 0$
(see the following remark for more discussions)
and it will become clear in the proof that this stronger requirement
on the semi-monotonicity is necessary for our purpose.
Nevertheless, this condition is not too difficult to fulfill.
Upon defining $b_0(x) = - \kappa_0 x$ and $b_1(m,x) = b(m,x) - b_0(x)$,
the first condition of Assumption~\ref{assu:si-contraction} is equivalent to
\[
\bigl( b_1(m,x) - b_1(m,x'), x - x'\bigr) \leqslant M_b,
\]
and this holds true
when $\sup_{(m,x)\in\mathcal P(\mathbb R^d)\times\mathbb R^d}
\lvert b_1(m,x) \rvert \leqslant M_b / 2$ in particular.
\end{rem}

\begin{rem}[Rate of convergence]
\label{rem:si-contraction-rate}
We discuss the rate of convergence across three parameter ranges.

In the first scenario, when the drift $b$ is $\kappa_0$-strongly monotone
for some $\kappa_0 > 0$, i.e., $M_b = 0$,
and when there is no mean field interaction ($L = 0$),
we have $M = 0$ and $K_0 = \min\bigl(\kappa_0, \frac{\lambda}{2}\bigr)$.
Consequently, $C = 1$ and $c = \min\bigl(\kappa_0, \frac{\lambda}{2}\bigr)$.
It is worth noting that under these conditions,
the component $X$ exhibits exponential contraction with a rate of $\kappa_0$,
while $m$ contracts at a rate no greater than $\lambda$.
In this case, the best contraction rate for the joint process
is $\min(\kappa_0, \lambda)$.
Thus our method yields a contraction rate that
remains at least half of the optimal one.

In the second scenario, when $\lambda$ is small but non-zero
(with self-interaction),
we obtain $c \sim 2M\lambda^{3/2}\exp(-M^2\!/2\lambda)$
and $C \sim 2M\lambda^{-1/2}\exp(-M^2\!/2\lambda)$.
We note that such exponentially slow convergence also arises
in the kinetic Langevin process as the damping parameter approaches zero;
see Eberle, Guillin and Zimmer \cite[Section~2.6]{EGZCoupling}
for further discussion.

Finally, for $\lambda > 2\kappa_0$, the contractivity constants $C$, $c$
become independent of $\lambda$, consistent with the intuition that
the self-interacting diffusion becomes linear in the large $\lambda$ limit.
\end{rem}

Now we discuss a few examples satisfying Assumption~\ref{assu:si-contraction}.

\begin{exm}[Two-body interaction]
\label{exm:two-body}
Consider $b (m,x) = b_0(x) + b_1(m,x)$, where
\[
b_1 (m,x) = \int K (x,x') m(\dd x').
\]
Suppose furthermore that
\[
\sup_{x\in\mathbb R^d}\lVert K(x,\cdot)\rVert_{W^{1,\infty}}
= \sup_{x\in\mathbb R^d}\max\bigl(\lVert K(x,\cdot)\rVert_{L^\infty},
\lVert \nabla K(x,\cdot)\rVert_{L^\infty}\bigr) \leqslant M,
\]
that is to say, the mapping $y \mapsto K(x,y)$ is $M$-bounded
and $M$-Lipschitz continuous for every $x$.
Thus, we have
\[
|b(m,x) - b(m',x)|
\leqslant \biggl| \int K(x, x') (m-m')(\dd x') \biggr|
\leqslant M W_{\omega} (m,m')
\]
for the modulus of continuity $\omega(x) = \min(x,2)$.
Therefore Assumption~\ref{assu:si-contraction}
is satisfied once $b_0$ is uniformly Lipschitz
and has a semi-monotonicity function
$\kappa_0(x) = \kappa_0 - M_1 / x$
for some $\kappa_0 > 0$ and $M_1 \geqslant 0$.
\end{exm}

We can generalize the example above to drifts that depend on the measure
in a non-linear way.

\begin{exm}[$\mathcal C^1$ functional]
\label{exm:c1}
Suppose $m \mapsto b(m,x)$ is $\mathcal C^1$ differentiable
in the sense that there exists a continuous and bounded mapping
$\frac{\delta b}{\delta m} : \mathcal P(\mathbb R^d)
\times \mathbb R^d \times \mathbb R^d \to \mathbb R$
such that
\[
b(m,x) - b(m',x) =
\int_0^1\!\!\int \frac{\delta b}{\delta m} \bigl((1-t)m+tm',x,x'\bigr)
(m-m')(\dd x') \dd t
\]
for all $m$, $m' \in \mathcal P(\mathbb R^d)$ and $x\in\mathbb R^d$.
If $\sup_{m,x} \bigl\Vert \frac{\delta b}{\delta m}(m,x,\cdot)
\bigr\Vert_{W^{1,\infty}}$
is finite and the vector fields $x \mapsto b(m,x)$
are uniformly Lipschitz and share a uniform semi-monotonicity function
$\kappa_0(x) = \kappa_0 - M_1 / x$
for some $\kappa_0 > 0$ and $M_1 \geqslant 0$,
then by the same argument as in Example~\ref{exm:two-body},
Assumption~\ref{assu:si-contraction} is satisfied.
\end{exm}

We now examine the stationary measure of the self-interacting process
\eqref{eq:si}.

\begin{defn}
\label{defn:si-stationary}
We call a probability measure
$P \in \mathcal{P}\bigl(\mathbb{R}^d \times \mathcal{P}(\mathbb{R}^d)\bigr)$
stationary to the self-interacting diffusion \eqref{eq:si}
if the stochastic process $(X_t, m_t)_{t \geqslant 0}$ with initial value
$\Law(X_0, m_0) = P$ satisfies $\Law(X_t, m_t) = P$ for all $t \geqslant 0$.
\end{defn}

The definition above makes sense since we have assumed
the existence and uniqueness of strong solution.

By the Banach fixed point theorem in the metric space
$\mathcal P_1\bigl(\mathbb R^d \times \mathcal P(\mathbb R^d)\bigr)$
and standard arguments,
Theorem~\ref{thm:si-contraction} implies
the existence and uniqueness
of stationary measure of the self-interacting process \eqref{eq:si}.

\begin{cor}
\label{cor:si-stationary-exist-unique}
Under Assumption~\ref{assu:si-contraction},
for every $\lambda > 0$,
there exists a unique stationary measure
of the self-interacting diffusion \eqref{eq:si}
in $\mathcal P\bigl(\mathbb R^d \times \mathcal P(\mathbb R^d)\bigr)$
whose first marginal distribution on $\mathbb{R}^d$ has finite first moment.
\end{cor}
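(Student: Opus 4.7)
The plan is to reduce this to the Banach fixed point theorem applied to the Markov semigroup induced by \eqref{eq:si} on a suitable complete metric space of measures, using the contraction estimate from Theorem~\ref{thm:si-contraction}.

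First, I would set up the state space. Let $\mathcal{Q}_\lambda$ denote the set of probability measures $\mu \in \mathcal{P}\bigl(\mathbb{R}^d \times \mathcal{P}(\mathbb{R}^d)\bigr)$ whose first marginal on $\mathbb{R}^d$ has finite first moment, equipped with the Wasserstein distance $W_{d_\lambda}$ introduced in Theorem~\ref{thm:si-contraction}. Since $\omega$ is bounded by $M_\omega$, we have $W_\omega \leqslant M_\omega$, and hence $d_\lambda\bigl((x,m),(x',m')\bigr) \leqslant |x-x'| + 2LM_\omega/\lambda$; thus $W_{d_\lambda}(\mu,\nu)$ is finite as soon as both $x$-marginals have finite first moment, so $W_{d_\lambda}$ is a genuine metric on $\mathcal{Q}_\lambda$. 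I would then check completeness: this follows from the standard completeness of Wasserstein spaces once one verifies that $\bigl(\mathbb{R}^d \times \mathcal{P}(\mathbb{R}^d), d_\lambda\bigr)$ is a complete separable metric space, using that $\mathcal{P}(\mathbb{R}^d)$ endowed with $W_\omega$ is Polish (since $\omega$ is a bounded continuous concave-like modulus on a Polish base, the topology induced by $W_\omega$ is the weak topology, which is Polish on $\mathcal{P}(\mathbb{R}^d)$).

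Next, I would define the semigroup $P_t$ on $\mathcal{Q}_\lambda$ by $P_t\mu = \Law(X_t,m_t)$ where $(X_0,m_0) \sim \mu$; this is well-defined by the standing existence/uniqueness assumption. To ensure $P_t$ leaves $\mathcal{Q}_\lambda$ invariant, I need a moment bound: from the SDE and the semi-monotonicity of $b(m,\cdot)$ (which gives a linear-plus-bounded drift after splitting off $-\kappa_0 x$), a standard Grönwall argument on $\Expect|X_t|$ shows that the first moment remains finite for all $t$. Theorem~\ref{thm:si-contraction} then yields the key estimate
\[
W_{d_\lambda}(P_t\mu, P_t\nu) \leqslant C e^{-ct} W_{d_\lambda}(\mu,\nu)
\qquad \text{for all } \mu,\nu \in \mathcal{Q}_\lambda,
\]
by coupling $(X_0,m_0) \sim \mu$ and $(X'_0,m'_0) \sim \nu$ optimally for $W_{d_\lambda}$ and running the dynamics driven by the \emph{same} Brownian motion (which only improves the coupling bound compared to the specific reflection coupling used to prove the theorem).

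Choosing $t_0$ large enough that $C e^{-c t_0} < 1$, the map $P_{t_0}$ is a strict contraction on the complete metric space $(\mathcal{Q}_\lambda, W_{d_\lambda})$, and the Banach fixed point theorem produces a unique $\rho^\lambda \in \mathcal{Q}_\lambda$ with $P_{t_0}\rho^\lambda = \rho^\lambda$. To upgrade this to stationarity under the entire semigroup, I would use the semigroup property: for any $s \geqslant 0$, $P_{t_0}(P_s \rho^\lambda) = P_s(P_{t_0}\rho^\lambda) = P_s \rho^\lambda$, so $P_s \rho^\lambda$ is also a fixed point of $P_{t_0}$ in $\mathcal{Q}_\lambda$ (moment-preservation guarantees $P_s\rho^\lambda \in \mathcal{Q}_\lambda$), hence equals $\rho^\lambda$ by uniqueness of the fixed point. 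Conversely, any stationary measure in $\mathcal{Q}_\lambda$ is automatically a fixed point of $P_{t_0}$, so it coincides with $\rho^\lambda$.

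The only step that requires any care beyond routine bookkeeping is verifying completeness of $(\mathcal{Q}_\lambda, W_{d_\lambda})$ and the invariance of $\mathcal{Q}_\lambda$ under $P_t$; the rest is a direct application of Banach's theorem together with Theorem~\ref{thm:si-contraction}. I do not expect any genuine obstacle, since the contraction estimate has already absorbed all the analytic difficulty.
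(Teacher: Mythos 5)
Your argument is exactly the ``Banach fixed point theorem plus standard arguments'' route that the paper takes, with the bookkeeping (completeness of the Wasserstein space over the Polish base $(\mathbb{R}^d\times\mathcal P(\mathbb R^d), d_\lambda)$, moment propagation, contraction of $P_{t_0}$ for large $t_0$, and the semigroup commutation trick to pass from a fixed point of $P_{t_0}$ to stationarity) filled in correctly. One small but worth-noting misstep: the parenthetical claim that running the two copies with the same Brownian motion ``only improves the coupling bound compared to the reflection coupling'' is both unnecessary and false in general --- under Assumption~\ref{assu:si-contraction} the drift need not be uniformly monotone (the $-M_b/x$ term allows non-contractive behavior at short range), and that is precisely why reflection coupling is required; synchronous coupling would not give the exponential contraction. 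You do not need any coupling choice here at all: Theorem~\ref{thm:si-contraction} bounds the Wasserstein distance between the laws $\Law(X_t,m_t)$ and $\Law(X'_t,m'_t)$ for arbitrary initial laws with finite first moments, so $W_{d_\lambda}(P_t\mu,P_t\nu)\leqslant Ce^{-ct}W_{d_\lambda}(\mu,\nu)$ follows directly by taking initial laws $\mu$ and $\nu$.
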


Finally, we note that although Theorem~\ref{thm:si-contraction},
along with Corollary~\ref{cor:si-stationary-exist-unique},
implies that the self-interacting process \eqref{eq:si}
converges to its stationary measure exponentially,
its assumptions are not sufficient to establish
the uniqueness of invariant measure
of the McKean--Vlasov process \eqref{eq:mkv},
as illustrated by the Curie--Weiss example in Section~\ref{sec:curie-weiss}.
So in general, there is no hope that
the self-interacting stationary measure approximates
the McKean--Vlasov invariant measure.

\subsection{Stationary measure in the gradient case}\label{subsec:gradientcase}

In this subsection,
we study the stationary measure of the self-interacting dynamics \eqref{eq:si},
provided that the drift $b$ is the negative intrinsic gradient
of a finite-dimensional mean field functional,
whose precise meaning will be explained in the following.
In particular, we aim at proving that, in this case,
the stationary measure of \eqref{eq:si} provides an approximation to
the invariant measure of the McKean--Vlasov dynamics \eqref{eq:mkv}.
We fix a positive $\lambda$ in this subsection.

The first assumption in the subsection is that the drift $b$ corresponds to
a gradient descent whose dependency in the mean field is finite-dimensional.

\begin{assu}[Finite-dimensional mean field]
\label{assu:finite-dim-gd}
For a closed convex set $\mathcal K \subset \mathbb R^D$,
there exists a function
\[
\ell = (\ell^1,\ldots, \ell^D) \in \mathcal C^1(\mathbb R^d; \mathcal K)
\]
whose gradient is of at most linear growth,
and a function $\Phi \in \mathcal C^2(\mathbb R^{D}; \mathbb R)$
whose Hessian $\nabla^2\Phi$ is bounded, such that the drift term $b$ reads
\begin{align*}
b(m,x)
&= - \nabla \Phi(\langle \ell, m\rangle) \cdot \nabla \ell(x)
= - \nabla \Phi \biggl( \int \ell (x) m(\dd x) \biggr) \cdot \nabla \ell (x) \\
&= - \sum_{\nu = 1}^D \nabla_\nu \Phi \biggl( \int \ell (x) m(\dd x) \biggr)
\nabla \ell^\nu (x).
\end{align*}
In other words, for the mean field functional
$F : \mathcal P_2(\mathbb R^d) \to \mathbb R$ defined by
\[
F(m) = \Phi (\langle \ell, m\rangle)
= \Phi \biggl( \int \ell(x') m(\dd x') \biggr),
\]
the drift $b$ is the negative intrinsic derivative:
\[
b(m,x) = - D_m F(m,x) = - \nabla_x \frac{\delta F}{\delta m}(m,x).
\]
\end{assu}

We shall also impose the following conditions
on a family of probability measures related to $\Phi$ and $\ell$.

\begin{assu}[Uniform LSI]
\label{assu:m-hat-uniform-lsi}
The probability measures $(\hat m_{y})_{y \in \mathcal K}$
on $\mathbb R^d$ determined by
\[
\hat m_y(\dd x)
\propto \exp \bigl( - 2 \nabla \Phi(y) \cdot \ell(x) \bigr) \dd x
\]
are well defined and satisfy a
uniform $C_\textnormal{LS}$-logarithmic Sobolev inequality
for some $C_\textnormal{LS} \geqslant 0$.
That is to say, for all regular enough probability measure
$m \in \mathcal P(\mathbb R^d)$ and all $y \in \mathcal K$, we have
\[
\int \log \frac{\dd m}{\dd \hat m_y} \dd m
\eqqcolon H(m | \hat m_y)
\leqslant \frac{C_\textnormal{LS}}{4} I(m | \hat m_y)
\coloneqq \frac{C_\textnormal{LS}}{4}
\int \biggl| \nabla \log \frac{\dd m}{\dd \hat m_y} \biggr|^2 \hat m_y,
\]
where $\dd m/\!\dd \hat m_y$ is the Radon--Nikod\'ym derivative
between measures.
\end{assu}

\begin{rem}
\label{rem:lsi}
As mentioned in the introduction,
the uniform log-Sobolev inequality is crucial to our method
as it is used to obtain the entropy estimate
in Proposition~\ref{prop:rho-hat-rho-entropy}.
This condition is often perceived as a strong one,
but still it can be verified if for example
\[
2 \nabla\Phi (y) \cdot \ell(x)
= U(x) + G(y,x)
\]
for some strongly convex $U : \mathbb R^d \to \mathbb R$
and some bounded $G : \mathcal K \times \mathbb R \to \mathbb R$.
Indeed, in this case $\hat m_y$ is a uniformly bounded perturbation
of the strongly log-concave measure
$e^{-U(x)}\dd x\big/\!\int e^{-U(x')}\dd x'$,
so it satisfies a uniform log-Sobolev inequality
by the Bakry--Émery condition \cite{BakryEmeryHypercontractives}
and the Holley--Stroock perturbation \cite{HolleyStroockLSI}.
We note that this condition has also been exploited
recently to obtain long-time behaviors
of mean field Langevin \cite{ChizatMFL,NWSConvexMFL}
and its particle systems \cite{ulpoc,uklpoc}.
\end{rem}

Finally, we assume that the following quantitative bound
on $\Phi$ and $\ell$.

\begin{assu}[Bound]
\label{assu:M2}
The following quantity is finite:
\begin{align*}
M_2 &\coloneqq \sup_{x \in \mathbb R^d, y \in \mathbb R^{D}}
\bigl\lvert \nabla^2 \Phi(y)^{1/2} \nabla \ell(x) \bigr\rvert^2 \\
&= \sup_{x \in \mathbb R^d, y \in \mathbb R^{D}}
\sup_{\substack{a \in \mathbb R^d\\ \lvert a\rvert =1}}
a^\top\nabla\ell(x)^\top \nabla^2 \Phi(y) \nabla \ell(x) a.
\end{align*}
\end{assu}

\begin{rem}
Under the three assumptions above, if $\Phi$ is additionally convex,
then there exists a unique invariant measure
$m_*$ of the McKean--Vlasov dynamics \eqref{eq:mkv}
by \cite[Proposition~4.4 and Corollary 4.8]{ulpoc},
and the convergence to the invariant measure is exponentially fast
\cite[Theorem 2.1]{ulpoc}.
In fact, the convexity of $\Phi$ implies precisely the functional convexity
of the mean field energy $F$ considered in \cite{ulpoc}.
\end{rem}

The main discovery of this paper characterizes the distance between
the stationary measure $P$ of the self-interacting dynamics \eqref{eq:si}
and $m_* \otimes \delta_{m_*}$.

\begin{thm}
\label{thm:dist-si-stationary-mkv-invariant}
Let Assumptions~\ref{assu:finite-dim-gd},
\ref{assu:m-hat-uniform-lsi} and \ref{assu:M2} hold true.
Suppose that $P = P^\lambda \in \mathcal P_4\bigl(
\mathbb R^d \times \mathcal P_4(\mathbb R^d)\bigr)$
is a stationary measure of the self-interacting process \eqref{eq:si}
in the sense of Definition~\ref{defn:si-stationary}
that has finite fourth moment:
\[
\iint \biggl( |x|^4 + \int |x'|^4 m(\dd x') \biggr) P(\dd x\dd m) < \infty.
\]
Let $(X, m)$ be a random variable distributed as $P$.
Denote by $\rho^1$ and $\rho^2$ the probability distributions
of the random variables $X$ and $\langle \ell, m\rangle = \int \ell(x) m(\dd x)$
respectively.

\begin{enumerate}
\item
Suppose $\Phi$ is convex.
Denote in the case by $m_*$ the unique invariant measure of the McKean--Vlasov
dynamics \eqref{eq:mkv}.
Define
\begin{align*}
y_* &\coloneqq \langle\ell, m_*\rangle = \int \ell(x)m_*(\dd x),\\
\lambda_0 &\coloneqq
\frac{1}{48M_2 C_\textnormal{LS}^2
\Bigl( 1 + 2 M_2C_\textnormal{LS}
\bigl( M_2^2C_\textnormal{LS}^2+1\bigr)^{1/2} \Bigr)}, \\
H &\coloneqq
\frac{C_\textnormal{LS}(D + 24 M_2C_\textnormal{LS}d)\lambda}
{2 - 96 M_2 C_\textnormal{LS}^2
\Bigl( 1 + 2 M_2C_\textnormal{LS}
\bigl( M_2^2C_\textnormal{LS}^2+1\bigr)^{1/2} \Bigr) \lambda}.
\end{align*}
Then, for
$\lambda \in (0, \lambda_0)$, we have
\begin{align*}
v(\rho^2)&\coloneqq \Expect \biggl[
\iiint_0^1
\frac{\delta^2 F}{\delta m^2}
\bigl((1-t)m + tm_*, x', x''\bigr)
\dd t\,(m - m_*)^{\otimes 2}(\dd x'\dd x'')
\biggr] \\
&= \iint_0^1
(y - y_*)^\top\nabla^2\Phi\bigl((1-t)y+ty_*\bigr)(y-y_*)
\dd t\, \rho^2(\dd y) \\
&\leqslant 4M_2C_\textnormal{LS}\bigl(M_2^2C_\textnormal{LS}^2+1\bigr)H,
\end{align*}
and
\begin{align*}
W_2^2 (\rho^1,m_*) &\leqslant
\Bigl( 2C_\textnormal{LS} + 4M_2C_\textnormal{LS}^2
\bigl(M_2^2C_\textnormal{LS}^2+1\bigr)^{1/2}\Bigr) H, \\
\lVert \rho^1 - m_*\rVert_\textnormal{TV}^2 &\leqslant
\Bigl( 4 + 8M_2C_\textnormal{LS}
\bigl(M_2^2C_\textnormal{LS}^2+1\bigr)^{1/2}\Bigr) H.
\end{align*}

\item
If in addition to the convexity of $\Phi$,
\[
M_1 \coloneqq \sup_{x \in \mathbb R^d, y \in \mathbb R^D}
\ell(x)^\top \nabla^2\Phi(y) \ell(x) < \infty,
\]
then the three inequalities above hold also
for all $\lambda \in (0, \infty)$, with $H$ replaced by
\[
H' \coloneqq \frac{C_\textnormal{LS}}{2}(D + 2M_1) \lambda.
\]

\item
If $\Phi$ is concave, then
for $\hat y \coloneqq \langle \ell, \hat m_y \rangle$,
we have
\[
-\int (\hat y-y)^\top \nabla^2\Phi(y) (\hat y-y) \rho^2(\dd y)
\leqslant \frac{M_2C_\textnormal{LS}^2D}{2} \lambda.
\]
\end{enumerate}
\end{thm}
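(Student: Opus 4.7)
The plan is the three-step program announced in the introduction. I first use Proposition~\ref{prop:rho-hat-rho-entropy} to show that the conditional law $\Law(X\vert m)$ is close to the local Gibbs measure $\hat m_y$ (with $y = \langle\ell,m\rangle$) on average under $\rho^\lambda$; next exploit the stationarity of $\rho^\lambda$ against a Bregman-type Lyapunov to produce an approximate fixed-point identity for $y$; and finally invoke convexity (resp.\ concavity) of $\Phi$ to translate this into the stated bounds on $v(\rho^2)$, $W_2^2(\rho^1, m_*)$, and $\lVert \rho^1 - m_*\rVert_\textnormal{TV}^2$.

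Concretely, in Step~1 Proposition~\ref{prop:rho-hat-rho-entropy} yields a control $\Expect[H(\Law(X\vert m)\,\vert\,\hat m_y)] \lesssim \lambda$. Talagrand's T2 inequality (from the uniform LSI) upgrades this to an $L^2$-Wasserstein bound and Pinsker to a TV one; combined with the at-most-linear growth of $\nabla\ell$ and the fourth moment hypothesis on $P$, I control $\ell(X) - \hat y$ in the $\nabla^2\Phi$-weighted $L^2$ norm. In Step~2, applying the chain rule to the Bregman Lyapunov
\[
V(m) = \Phi(\langle\ell,m\rangle) - \Phi(y_*) - \nabla\Phi(y_*)^\top(\langle\ell,m\rangle - y_*)
\]
along the finite-variation flow $\dot m_t = \lambda(\delta_{X_t} - m_t)$ gives $\frac{\dd}{\dd t}V(m_t) = \lambda\,(\nabla\Phi(y_t) - \nabla\Phi(y_*))^\top(\ell(X_t) - y_t)$, and stationarity of $P$ rearranges to the key identity
\[
v(\rho^2) = \Expect\bigl[(\nabla\Phi(y) - \nabla\Phi(y_*))^\top(\ell(X) - y_*)\bigr].
\]

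Step~3 closes the estimate via the splitting $\ell(X) - y_* = (\ell(X) - \hat y) + (\hat y - y_*)$. The first piece is bounded by Step~1, applying Cauchy--Schwarz in the $\nabla^2\Phi$-weighted inner product where $M_2$ controls the relevant operator norm. For the second piece, differentiating the Gibbs form of $\hat m_y$ in $y$ yields
\[
\hat y(y) - \hat y(y_*) = -2\int_0^1 \mathrm{Cov}_{\hat m_{y_t}}(\ell)\,\nabla^2\Phi(y_t)\,(y - y_*)\,\dd t,
\qquad y_t \coloneqq y_* + t(y - y_*),
\]
and the Poincaré inequality (consequence of the uniform LSI), together with $M_2$, controls the operator $\mathrm{Cov}_{\hat m}(\ell)\,\nabla^2\Phi$ in the correct weighted norm. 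Assembling the two pieces produces a self-consistent inequality of the schematic form $v(\rho^2) \le A\lambda + B\sqrt{\lambda\,v(\rho^2)} + C\lambda\,v(\rho^2)$, which under $\lambda < \lambda_0$ solves to the explicit $H$ of the statement. The $W_2^2$ and $\mathrm{TV}$ bounds on $\rho^1$ follow by writing $\rho^1 = \Expect[\Law(X\vert m)]$ and combining Step~1 with the $v(\rho^2)$ bound, the stability of $\hat m_y$ in $y$ (via the uniform LSI) bridging $\hat m_y$ and $m_*$. Part~2 is the simplification where $M_1 < \infty$ directly bounds the first contribution of the splitting, making the closure inequality trivial and removing the restriction on $\lambda$. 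Part~3 needs no fixed-point argument: the same chain rule applied to $V(m) = \Phi(\langle\ell,m\rangle)$ alone, combined with Step~1 and concavity of $\Phi$, yields the stated quadratic-form bound on $-\Expect[(\hat y - y)^\top \nabla^2\Phi(y)(\hat y - y)]$.

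The main obstacle is the closure in Step~3: the cross-term $\Expect[(\nabla\Phi(y) - \nabla\Phi(y_*))^\top(\hat y(y) - \hat y(y_*))]$ is a quadratic form in $y - y_*$ whose kernel is a product $\nabla^2\Phi \cdot \mathrm{Cov}_{\hat m}(\ell) \cdot \nabla^2\Phi$, which need not be sign-definite, so convexity of $\Phi$ alone does not control its sign. The resolution is Cauchy--Schwarz in the $\nabla^2\Phi$-weighted geometry, where Poincaré and the definition of $M_2$ bound exactly the right operator norm. This is what dictates the intricate thresholds $\lambda_0$ and $H$ appearing in the statement.
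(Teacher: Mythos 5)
Your Step~1 and the identity in Step~2 are correct and match the paper's ingredients: Step~1 is Proposition~\ref{prop:rho-hat-rho-entropy} plus Talagrand and Pinsker, and your "Bregman Lyapunov" identity $v(\rho^2)=\Expect[(\nabla\Phi(Y)-\nabla\Phi(y_*))^\top(\ell(X)-y_*)]$ is exactly what Lemma~\ref{lem:grad-invariant} gives with $\varphi(y)=\Phi(y)-\nabla\Phi(y_*)\cdot y$. The gap is in Step~3, in the way you close the bootstrap. You split $\ell(X)-y_*=(\ell(X)-\hat Y)+(\hat Y-y_*)$ and propose to control the second piece by differentiating the Gibbs map $y\mapsto\hat y(y)$, using the covariance identity and Poincar\'e. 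But that argument only yields $|\hat Y-y_*|\lesssim\lVert\mathrm{Cov}_{\hat m}(\ell)\nabla^2\Phi\rVert\,|Y-y_*|$, and after the weighted Cauchy--Schwarz this contributes a term of order $\lVert\mathrm{Cov}_{\hat m}(\ell)\nabla^2\Phi\rVert\cdot v(\rho^2)$ --- \emph{with no factor of $\lambda$}. Poincar\'e and $M_2$ bound that operator norm by $O(M_2 C_\textnormal{LS})$, which is $O(1)$ in $\lambda$, so your schematic closure $v(\rho^2)\leqslant A\lambda+B\sqrt{\lambda v(\rho^2)}+C\lambda v(\rho^2)$ is wrong: the last term should be $Cv(\rho^2)$ without $\lambda$, and closing it would require $M_2C_\textnormal{LS}$ to be small, a smallness condition on the coefficients that the theorem does not assume.

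The paper's route avoids this. Instead of splitting the $\ell$-moment, it splits at the level of measures, $\rho^{1|2}(\cdot|y)-m_*=(\rho^{1|2}(\cdot|y)-\hat m_y)+(\hat m_y-m_*)$, and controls the second piece $W_2^2(\hat m_y,m_*)$ via a \emph{symmetrized entropy} bound $\int(H(\hat m_y|m_*)+H(m_*|\hat m_y))\rho^2(\dd y)\leqslant 2\sqrt{M_2C_\textnormal{LS}\,v(\rho^2)H(\rho|\hat\rho)}$. This is where convexity of $F$ enters in an essential way: the tangent inequality \eqref{eq:tangent} (established through the auxiliary measure $\mu$ of Section~5.3, whose conditional $\ell$-moment equals $y$) produces the \emph{sign} \eqref{eq:convergence-energy-ineq}, which lets one drop the cross term and obtain an upper bound of order $\sqrt{v(\rho^2)H(\rho|\hat\rho)}\sim\sqrt{v(\rho^2)\lambda}$, not $v(\rho^2)$. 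That is the mechanism that makes the bootstrap close under a threshold $\lambda<\lambda_0$ rather than a smallness condition on $M_2C_\textnormal{LS}$. In contrast, your differentiation-of-$\hat y$ argument uses only the smoothness of the Gibbs map, not the convexity sign, and therefore loses exactly the extra $\sqrt\lambda$ factor that drives the proof. You also omit the second self-consistent loop: $H(\rho|\hat\rho)$ itself depends on $W_2^2(\rho^1,m_*)$ through the integral $I$ (Proposition~\ref{prop:upper-bound-I}), and that dependence must be absorbed, which is what the numerical thresholds $\lambda_0,H$ in the statement encode. For part~3, your sketch is too compressed to assess: the paper uses the specific gradient structure $\nabla^2\Phi(y)\hat y=-\tfrac12\nabla_y\log\int e^{-2\nabla\Phi(y)\cdot\ell(x)}\dd x$ as a test function in Lemma~\ref{lem:grad-invariant}, and it is not clear that "the chain rule applied to $V(m)=\Phi(\langle\ell,m\rangle)$ alone" reproduces this.
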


The proof of Theorem~\ref{thm:dist-si-stationary-mkv-invariant} is postponed to
Section~\ref{sec:gd}.

\begin{rem}[Dependence on the dimension $D$]
\label{rem:finite-dim}
Readers may have observed that, in our framework,
the value of the functional $F(m)$ may only depend
on the $D$-dimensional vector $\int \ell(x) m(\dd x)$,
and this corresponds to ``cylindrical functions'' considered
in \cite[Definition 5.1.11]{AGSGradientFlows}.
Given a continuous functional $F$ on $\mathcal P(\mathbb R^d)$,
for every compact subset $\mathcal S\subset\mathbb{R}^d$,
we can construct, according to
the Stone--Weierstrass theorem, a sequence
of functions $\ell_n : \mathbb R^d \to \mathbb R^{D_n}$
and $\Phi_n : \mathbb R^{D_n} \to \mathbb R$
such that the cylindrical functionals
$F_n(m) = \Phi_n \bigl( \int \ell_n \dd m \bigr)$ approach $F$
in the uniform topology of $\mathcal C\bigl(\mathcal P(\mathcal S)\bigr)$
(see \cite[Lemma 2]{FlemingViot} for example).
However, the dimension $D_n$ may tend to infinity when $n \to \infty$.
Since all the upper bounds
in Theorem~\ref{thm:dist-si-stationary-mkv-invariant}
depend on the dimension $D$ linearly, our analysis and findings cannot
be directly applied to more general functionals on $\mathcal P(\mathbb R^d)$.
\end{rem}

\smallbreak

\begin{rem}[Meaning of $M_1$ and $M_2$]
The second-order functional derivative of $F$ reads
\[
\frac{\delta^2F}{\delta m^2}(m,x',x'')
= \ell(x')^\top \nabla^2\Phi(y) \ell(x''),
\]
and in the case of convex $\Phi$,
satisfies the Cauchy--Schwarz inequality:
\begin{align*}
\biggl| \frac{\delta^2F}{\delta m^2}(m,x',x'') \biggr|
&= \bigl| \ell(x')^\top \nabla^2\Phi(y) \ell(x'') \big| \\
&\leqslant \bigl| \ell(x')^\top \nabla^2\Phi(y) \ell(x') \big|^{1/2}
\bigl| \ell(x'')^\top \nabla^2\Phi(y) \ell(x'') \big|^{1/2} \\
&= \biggl| \frac{\delta^2F}{\delta m^2}(m,x',x') \biggr|^{1/2}
\biggl| \frac{\delta^2F}{\delta m^2}(m,x'',x'') \biggr|^{1/2} \leqslant M_1.
\end{align*}
Similarly, the second-order intrinsic derivative satisfies
\begin{align*}
\bigl|D_m^2 F(m,x',x'')\bigr|
&= \bigl|\nabla\ell(x')^\top \nabla^2\Phi(y) \nabla\ell(x'')\bigr| \\
&\leqslant \bigl|\nabla\ell(x')^\top \nabla^2\Phi(y) \nabla\ell(x')\bigr|^{1/2}
\bigl|\nabla\ell(x'')^\top \nabla^2\Phi(y) \nabla\ell(x'')\bigr|^{1/2} \\
&= \bigl| D_m^2 F(m,x',x') \bigr|^{1/2}
\bigl| D_m^2 F(m,x'',x'') \bigr|^{1/2} \leqslant M_2.
\end{align*}
Moreover, by taking $x' = x''$ in the inequalities above,
we find that $M_1$ and $M_2$ are the respective suprema
of the second-order flat and intrinsic derivatives of the functional $F$.
\end{rem}

\smallbreak

We illustrate in the following example that the order of $\lambda$
when $\lambda \to 0$ for the variance of $\langle \ell, m\rangle$ under $P$
in Theorem~\ref{thm:dist-si-stationary-mkv-invariant}
(i.e., the first claim) is optimal.

\smallbreak

\begin{exm}[Optimality of the order of $\lambda$]
\label{exm:optimal-order}
Consider the mean field functional
$F : \mathcal P_2(\mathbb R^d) \to \mathbb R$ given by
\[
F(m) = \frac 12 \int \lvert x\rvert^2 m(\dd x)
+ \frac 12 \biggl| \int x m(\dd x) \biggr|^2.
\]
By taking $D = d+1$,
$\ell(x) = (x,\lvert x\rvert^2\!/2)^\top$,
$\Phi(y_0, y_1) = y_0^2/2 + y_1$,
the mean field functional $F$ is covered by the cylindrical setting
(namely Assumption~\ref{assu:finite-dim-gd})
of Theorem~\ref{thm:dist-si-stationary-mkv-invariant}.
Moreover, the function $\Phi$ is convex.

The corresponding gradient dynamics \eqref{eq:mkv}
is then characterized by the drift
\[
b(m,x) = - x - \int x' m(\dd x'),
\]
and its unique invariant measure $m_*$ is $\mathcal N(0,1/2)$.
We explicitly compute quantities related to the stationary measure $P^\lambda$
of the self-interacting dynamics \eqref{eq:si} in the following.
The dynamics reads
\begin{align*}
\dd X_t &= - X_t \dd t - \int x' m_t(\dd x') \dd t + \dd B_t, \\
\dd m_t &= \lambda (\delta_{X_t} - m_t) \dd t,
\intertext{and has a unique strong solution by Cauchy--Lipschitz arguments.
Upon defining $Y_{0,t} = \int x' m_t(\dd x')$, the process
has the finite-dimensional projection:}
\dd X_t &= (-X_t - Y_{0,t}) \dd t + \dd B_t, \\
\dd Y_{0,t} &= \lambda (-Y_{0,t} + X_t) \dd t.
\end{align*}
The finite-dimensional dynamics has a unique invariant measure
that is a centered Gaussian with the following covariance structure:
\begin{align*}
\Expect [X\otimes X] &= \frac{\lambda+2}{4(\lambda+1)}
\1_{d\times d}, \\
\Expect[Y_0\otimes Y_0] = \Expect[X\otimes Y_0]
&= \frac{\lambda}{4(\lambda+1)} \1_{d\times d}.
\end{align*}
Hence, the exact distances, or bounds thereof, read
\begin{align*}
\Expect\bigl[\lvert Y_0\rvert^2\bigr]
&= \frac{d\lambda}{4(1+\lambda)}, \\
W_2^2\bigl(\Law(X), m_*\bigr) &=
\frac{d}{2} \Biggl( 1 - \biggl( 1 - \frac{\lambda}{2(1+\lambda)}\biggr)^{\!1/2}
\Biggr)^{\!2}, \\
\lVert \Law(X) - m_*\rVert_\textnormal{TV}^2
&\in \biggl[ \frac 1{10000} \frac{d\lambda^2}{4(1+\lambda)^2},\,
\frac 94 \frac{d\lambda^2}{4(1+\lambda)^2}\biggr],
\end{align*}
where the last mutual bound is due to
\cite[Theorem 1.1]{DMRTVHighDimGaussians}.

Now we try to verify the assumptions
of Theorem~\ref{thm:dist-si-stationary-mkv-invariant}.
By the Kolmogorov extension theorem, we can construct
a stationary Markov process $(X_t, Y_{0,t})_{t \in \mathbb R}$,
defined on the whole real line, such that
$\Law(X_t, Y_{0,t}) = \Law (X, Y_0)$ for all $t \in \mathbb R$.
Then, by defining
\[
m_t = \lambda \int_{- \infty}^t e^{-\lambda (t-s)} \delta_{X_s} \dd s,
\]
we recover the solution $(X_t, m_t)_{t \in \mathbb R}$
to the original infinite-dimensional dynamics.
By construction, $\Law(X_t, m_t)$ is stationary
and has finite fourth moments.
The rest of the assumptions
of Theorem~\ref{thm:dist-si-stationary-mkv-invariant}
can be satisfied with the constants
$C_\textnormal{LS} = 1/2$ and $M_2 = 1$.
Since $\Phi$ is convex,
by the first claim of the theorem, we get
\begin{align*}
\Expect\bigl[\lvert Y_0\rvert^2\bigr] &\leqslant
\frac{5(13d+1)\lambda}{8 - 48(2+\sqrt 5)\lambda}, \\
W_2^2\bigl(\Law(X), m_*\bigr) &\leqslant
\frac{(2+\sqrt 5)(13d+1)\lambda}{8 - 48(2+\sqrt 5)\lambda}, \\
\lVert \Law(X) - m_* \rVert_\textnormal{TV}^2 &\leqslant
\frac{(2+\sqrt 5)(13d+1)\lambda}{2 - 12(2+\sqrt 5)\lambda},
\end{align*}
for $\lambda < 1 \big/6(2+\sqrt 5)$.
So the results of Theorem~\ref{thm:dist-si-stationary-mkv-invariant}
give the optimal order of $\lambda$ when $\lambda \to 0$
for the variance of the $Y$ variable,
but possibly sub-optimal ones for the Wasserstein and total-variation distances
in the $X$ direction.
\end{exm}

\subsection{A class of dynamics}

In this subsection, we present a class of dynamics
to which both Theorems~\ref{thm:si-contraction}
and \ref{thm:dist-si-stationary-mkv-invariant} are applicable.
This class encompasses in particular the neural network example
that will be discussed in the following Section~\ref{sec:numerical}.

\begin{assu}
\label{assu:example-dynamics}
The drift functional writes
\begin{equation}
\label{eq:drift-example-dynamics}
b(m,x) = - \nabla U(x)
- \nabla \Phi_0 \biggl( \int \ell_0(x') m(\dd x') \biggr)
\cdot \nabla \ell_0 (x).
\end{equation}
for some functions $U : \mathbb R^d \to \mathbb R$,
$\Phi_0 : \mathbb R^D \to \mathbb R$,
$\ell_0 : \mathbb R^d \to \mathbb R^D$ satisfying the following conditions:
\begin{itemize}
\item the function $U$ is $\mathcal C^2$ continuous with bounded Hessian, i.e.,
$\lVert \nabla^2 U \rVert_{L^\infty} < \infty$, and its gradient admits a
semi-monotonicity function $\kappa(x) = \kappa_0 - M/x$ for some $\kappa_0 > 0$
and $M \geqslant 0$.
\item the probability measure $Z^{-1} \exp \bigl( - 2U(x) \bigr) \dd x$,
with $Z = \int \exp \bigl( - 2U(x) \bigr) \dd x$, is well defined
in $\mathcal P(\mathbb R^d)$,
and satisfies a $C_{\textnormal{LS}, 0}$-logarithmic Sobolev inequality.
\item the function $\Phi_0$ is $\kappa_{\Phi_0}$-strongly convex
for some $\kappa_{\Phi_0} > 0$
and belongs to $\mathcal C^2(\mathbb R^D) \cap W^{2,\infty}(\mathbb R^D)$.
\item the function $\ell_0$ belongs to $\mathcal C^1(\mathbb R^d; \mathbb R^D)
\cap W^{1,\infty} (\mathbb R^d; \mathbb R^D)$.
\end{itemize}
\end{assu}

\begin{prop}
\label{prop:example-dynamics}
Under Assumption~\ref{assu:example-dynamics},
there exists a unique stationary measure
$P \in \mathcal P_1\bigl(\mathbb R^d\times\mathcal P(\mathbb R^d)\bigr)$
to the self-interacting dynamics \eqref{eq:si}.
Moreover, there exists $C > 0$, independent of $\lambda$, such that
for $(X, m)$ distributed as $P$,
\[
\Expect[\lvert \langle \ell_0, m-m_*\rangle \rvert^2]
+ W_2^2\bigl(\Law(X),m_*\bigr) +
\lVert \Law(X) - m_* \rVert_\textnormal{TV}^2 \leqslant C\lambda,
\]
where $m_*$ is the unique invariant measure to the McKean--Vlasov process.
\end{prop}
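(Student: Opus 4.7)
The plan is to verify that Assumption~\ref{assu:example-dynamics} implies the hypotheses of both Theorem~\ref{thm:si-contraction} and Theorem~\ref{thm:dist-si-stationary-mkv-invariant} (specifically its second part), and then to translate the conclusions into the statement of the proposition. The concrete form \eqref{eq:drift-example-dynamics} of the drift and the boundedness/convexity structure imposed on $\Phi_0$ and $\ell_0$ reduce every hypothesis to an elementary check and ensure that the quantitative prefactors are independent of $\lambda$.

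First, I would verify Assumption~\ref{assu:si-contraction}. Writing $b(m,x) = -\nabla U(x) - \nabla\Phi_0(\langle\ell_0,m\rangle)\cdot\nabla\ell_0(x)$, the second summand is bounded in $x$ uniformly in $m$ by $\lVert\nabla\Phi_0\rVert_{L^\infty}\lVert\nabla\ell_0\rVert_{L^\infty}$, so the first part of the assumption follows from the semi-monotonicity of $-\nabla U$ by inflating the constant $M$, as in Remark~\ref{rem:assum-si-contraction}. For the measure-Lipschitz condition, $b(m,x) - b(m',x)$ is controlled by $\lVert\nabla^2\Phi_0\rVert_{L^\infty}\lVert\nabla\ell_0\rVert_{L^\infty}\lvert\langle\ell_0,m-m'\rangle\rvert$, which is in turn bounded by $\lVert\ell_0\rVert_{W^{1,\infty}} W_\omega(m,m')$ with $\omega(r) = \min(r,2)$, exactly as in Example~\ref{exm:two-body}. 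Corollary~\ref{cor:si-stationary-exist-unique} then produces a unique stationary measure $P \in \mathcal P_1$. To upgrade integrability to $\mathcal P_4(\mathbb R^d\times\mathcal P_4(\mathbb R^d))$, required by Theorem~\ref{thm:dist-si-stationary-mkv-invariant}, I would run a Lyapunov argument with $V(x) = \lvert x\rvert^4$: the strong monotonicity of $-\nabla U$ produces a term of order $-\kappa_0\lvert x\rvert^4$ that dominates both the bounded interaction and the $\frac12 \Delta V$ contribution at infinity, yielding $\mathcal L V \leq -c_1 V + c_2$ uniformly in $m$; this gives a time-uniform bound on $\Expect[\lvert X_t\rvert^4]$, and the stationary representation
\[
m_t = \lambda \int_{-\infty}^t e^{-\lambda(t-s)} \delta_{X_s} \dd s
\]
transfers this to $\Expect[\int\lvert x\rvert^4 m(\dd x)]$.

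Next, I would rewrite $b$ in the cylindrical form required by Assumption~\ref{assu:finite-dim-gd} by augmenting the observables. Set
\[
\ell(x) = \bigl(\ell_0(x), U(x)\bigr) \in \mathbb R^{D+1}, \qquad
\Phi(y_0, y_1) = \Phi_0(y_0) + y_1, \qquad
\mathcal K = \mathbb R^{D+1}.
\]
A direct computation gives $D_m F(m,x) = \nabla\Phi_0(\langle\ell_0,m\rangle)\cdot\nabla\ell_0(x) + \nabla U(x) = -b(m,x)$; the at-most-linear growth of $\nabla\ell$ follows from $\lVert\nabla^2 U\rVert_{L^\infty} < \infty$, and $\nabla^2\Phi$ is block diagonal with bounded block $\nabla^2\Phi_0$ and a vanishing second block. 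The local equilibrium simplifies to $\hat m_y(\dd x) \propto \exp(-2\nabla\Phi_0(y_0)\cdot\ell_0(x) - 2U(x))\dd x$, which is a uniformly bounded perturbation of $Z^{-1}e^{-2U(x)}\dd x$ because $\ell_0$ and $\nabla\Phi_0$ are bounded; the Holley--Stroock lemma then yields the uniform log-Sobolev inequality required in Assumption~\ref{assu:m-hat-uniform-lsi}, with a constant depending only on $C_{\textnormal{LS},0}$, $\lVert\ell_0\rVert_{L^\infty}$ and $\lVert\nabla\Phi_0\rVert_{L^\infty}$. The same block structure reduces the quadratic forms defining $M_2$ and $M_1$ to $\nabla\ell_0^\top\nabla^2\Phi_0\nabla\ell_0$ and $\ell_0^\top\nabla^2\Phi_0\ell_0$, both uniformly bounded; convexity of $\Phi$ is immediate since $\Phi_0$ is strongly convex and $y_1$ enters affinely.

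Applying the second part of Theorem~\ref{thm:dist-si-stationary-mkv-invariant}, valid for every $\lambda > 0$ since $M_1 < \infty$, produces
\[
v(\rho^2) + W_2^2(\rho^1, m_*) + \lVert \rho^1 - m_*\rVert_{\textnormal{TV}}^2 \leq C_0 H' \leq C_1 \lambda,
\]
for constants $C_0, C_1$ depending only on $D$, $M_1$, $M_2$, $C_{\textnormal{LS}}$ and $\kappa_{\Phi_0}$. The Wasserstein and total-variation contributions to the proposition are immediate. For the remaining term, I would exploit the $\kappa_{\Phi_0}$-strong convexity of $\Phi_0$: since $y - y_* = (\langle\ell_0, m-m_*\rangle, \langle U, m-m_*\rangle)$ and $\nabla^2\Phi$ annihilates the last coordinate,
\[
(y-y_*)^\top \nabla^2\Phi(\cdot) (y-y_*) \geq \kappa_{\Phi_0} \lvert \langle\ell_0, m-m_*\rangle \rvert^2,
\]
so that $v(\rho^2) \geq \kappa_{\Phi_0}\Expect[\lvert\langle\ell_0, m-m_*\rangle\rvert^2]$, and dividing by $\kappa_{\Phi_0}$ yields the $O(\lambda)$ bound. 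The main obstacle I anticipate is the fourth-moment integrability of $P$, which requires a careful Lyapunov computation; everything else is an algebraic check of the assumptions.
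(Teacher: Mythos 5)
Your proposal matches the paper's proof almost step for step: verify Assumption~\ref{assu:si-contraction} from the boundedness/Lipschitz structure of the interaction term, apply Corollary~\ref{cor:si-stationary-exist-unique} for existence and uniqueness, augment $\ell = (\ell_0, U)$ and $\Phi(y_0,y_1) = \Phi_0(y_0) + y_1$ to land in the cylindrical setting, verify the uniform LSI by Holley--Stroock, check $M_1$ and $M_2$ are finite, invoke the second case of Theorem~\ref{thm:dist-si-stationary-mkv-invariant}, and finally use the $\kappa_{\Phi_0}$-strong convexity of $\Phi_0$ to extract $\Expect[\lvert\langle\ell_0, m-m_*\rangle\rvert^2]$ from $v(\rho^2)$. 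The only stylistic departure is the fourth-moment argument, where you run a Lyapunov estimate on $X$ alone and then transfer to $m$ via the stationary integral representation, whereas the paper uses the single joint Lyapunov functional $E(x,m) = \kappa_0^{-1}\lvert x\rvert^4 + \int\lvert x'\rvert^4 m(\dd x')$; both achieve the same bound, the paper's route being slightly more self-contained since it avoids invoking a two-sided stationary extension.
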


\begin{proof}[Proof of Proposition~\ref{prop:example-dynamics}]
We first verify the conditions of Theorem~\ref{thm:si-contraction}
to establish the existence and uniqueness of the stationary measure.
As the drift $b$ has derivative
\[
\frac{\delta b}{\delta m}(m,x,x')
= - \nabla^2 \Phi_0 \biggl( \int \ell_0(x'') m(\dd x'') \biggr)
\cdot \nabla \ell_0(x)
\cdot \ell_0(x'),
\]
we have
\[
\biggl\Vert\frac{\delta b}{\delta m}(m,x,\cdot)\biggr\Vert_{W^{1,\infty}}
\leqslant \lVert \nabla^2 \Phi_0 \rVert_{L^\infty}
\lVert \nabla \ell_0 \rVert_{L^\infty}
\lVert \ell_0 \rVert_{W^{1,\infty}}
\]
for every $m \in \mathcal P(\mathbb R^d)$ and every $x \in \mathbb R^d$.
Then the dynamics falls into the class considered in Example~\ref{exm:c1}.
The conditions of Theorem~\ref{thm:si-contraction} are satisfied.
Applying Corollary~\ref{cor:si-stationary-exist-unique} gives
the existence and the uniqueness of the stationary measure
$P \in \mathcal P_1\bigl(\mathbb R^d \times \mathcal P(\mathbb R^d)\bigr)$.

We proceed to verify the conditions of
Theorem~\ref{thm:dist-si-stationary-mkv-invariant}.
Introduce the functions
$\Phi : \mathbb R^{D+1} \to \mathbb R$,
$\ell : \mathbb R^d \to \mathbb R^{D+1}$
defined respectively by
\begin{align*}
\Phi(y_0, y_1) &= \Phi_0 (y_0) + y_1,
&&\text{for}~y_0\in\mathbb R^D~\text{and}~y_1\in\mathbb R, \\
\ell(x) &= \bigl( \ell_0(x), U(x) \bigr),
&&\text{for}~x\in\mathbb R^d.
\end{align*}
Here the range set $\mathcal K$ of the mapping $\ell$
is taken as the whole space $\mathbb R^D$.
In this way, the drift $b$ reads
\[
b(m,x) = - \nabla \Phi\biggl( \int \ell(x') m(\dd x') \biggr)
\cdot \nabla \ell(x)
\]
so Assumption~\ref{assu:finite-dim-gd} is satisfied.
Now we show the stationary measure $P$ of the dynamics \eqref{eq:si}
has finite fourth moment.
Consider the functional
\[
E (x,m) = \kappa_0^{-1} \lvert x\rvert^4 + \int \lvert x'\rvert^4 m(\dd x').
\]
Along the self-interacting dynamics \eqref{eq:si},
we have, by It\=o's formula,
\begin{multline*}
\frac{\dd}{\dd t} \Expect [E(X_t, m_t)]
= 4 \kappa_0^{-1} \Expect \bigl[b(m_t, X_t) \cdot X_t \lvert X_t\rvert^2\bigr]
+ (2d + 4) \kappa_0^{-1}\Expect\bigl[\lvert X_t\rvert^2\bigr] \\
- \lambda \Expect \biggl[ \int \lvert x'\rvert^4 m'(\dd x') \biggr]
+ \lambda \Expect \bigl[ \lvert X_t \rvert^4 \bigr].
\end{multline*}
As the vector field $x \mapsto b(m,x)$ has weak monotonicity function
$\kappa_b(x) = \kappa_0 - M_b / x$, we have
\[
b(m,x) \cdot x \leqslant - \frac{\kappa_0}{2} \lvert x\rvert^2 + C_2
\]
for every $(m,x) \in \mathcal P(\mathbb R^d) \times \mathbb R^d$,
for some $C_2 \geqslant 0$.
The functional $E$ verifies the Lyapunov condition as well:
\[
\frac{\dd}{\dd t} \Expect [E(X_t, m_t)]
\leqslant - c_3 \Expect [E(X_t, m_t)] + C_3
\]
for some $c_3 > 0$, $C_3 \geqslant 0$.
In consequence, the stationary measure $P$
of finite first moment must have finite fourth moment.
Since
\[
\hat m_y(\dd x)
\propto \exp\bigl(- 2 \nabla \Phi (y) \cdot \ell(x)\bigr) \dd x
= \exp\bigl(- 2 \nabla \Phi_0 (y) \cdot \ell_0(x)\bigr)
\exp\bigl(-2 U(x)\bigr) \dd x
\]
with $\sup_{x,y}\lvert\nabla \Phi_0 (y) \cdot \ell_0(x)\rvert \leqslant
\lVert \nabla \Phi_0 \rVert_{L^\infty} \lVert \ell_0 \rVert_{L^\infty}$,
by the Holley--Stroock perturbation argument \cite{HolleyStroockLSI},
we know that the measure $\hat m_y$
verifies a uniform $C_\textnormal{LS}$-LSI with
\[
C_\textnormal{LS} = C_{\textnormal{LS},0}
\exp(4 \lVert \nabla \Phi_0 \rVert_{L^\infty} \lVert \ell_0 \rVert_{L^\infty}),
\]
so Assumption~\ref{assu:m-hat-uniform-lsi} is satisfied.
The constants $M_1$, $M_2$
in Theorem~\ref{thm:dist-si-stationary-mkv-invariant}
and Assumption~\ref{assu:M2} are finite as
\begin{align*}
\lvert\ell(x)^\top \nabla^2 \Phi(y) \ell(x)\rvert
&= \lvert\ell_0(x)^\top \nabla^2\Phi_0(y) \ell_0(x)\rvert
\leqslant \lVert\nabla^2\Phi_0\rVert_{L^\infty}
\lVert\ell_0\rVert_{L^\infty}^2, \\
\lvert\nabla\ell(x) \nabla^2 \Phi(y) \nabla \ell(x)\rvert
&= \lvert\nabla\ell_0(x)^\top \nabla^2\Phi_0(y) \nabla\ell_0(x)\rvert
\leqslant \lVert\nabla^2\Phi_0\rVert_{L^\infty}^2
\lVert\nabla \ell_0\rVert_{L^\infty}.
\end{align*}
All the conditions of Theorem~\ref{thm:dist-si-stationary-mkv-invariant}
are satisfied.
Since for all $y \in \mathbb R^D$,
\begin{align*}
\kappa_{\Phi_0}\lvert\langle\ell_0,m-m_*\rangle\rvert^2
&\leqslant
\langle\ell_0,m-m_*\rangle^\perp\nabla^2\Phi_0(y)
\langle\ell_0,m-m_*\rangle \\
&= \langle\ell,m-m_*\rangle^\perp\nabla^2\Phi(y)
\langle\ell,m-m_*\rangle,
\end{align*}
the claim of the proposition follows from the second case stated in the theorem.
\end{proof}

\begin{rem}[Open question]
Applying further the convergence result of Theorems~\ref{thm:si-contraction},
we can obtain a bound on the difference between the marginal distribution
of the non-stationary self-interacting diffusion \eqref{eq:si} and the
invariant measure of the McKean--Vlasov dynamics \eqref{eq:mkv}.
Specifically, let $(X_t, m_t)$ denote
the self-interacting process \eqref{eq:si}.
Theorem~\ref{thm:si-contraction} yields
\[
W_1\bigl(\Law(X_t, m_t),\Law(X, m)\bigr)\leqslant Ce^{-c t},
\]
where $C$, $c$ are the contractivity constants in the theorem.
Let $\varphi : \mathbb R^K \to \mathbb R$ be a $1$-Lipschitz test function.
Combining this with Proposition~\ref{prop:example-dynamics},
we can bound the following $L^1$ simulation error:
\[
\Expect [ \lvert \langle \varphi \circ \ell, m_t - m_*\rangle\rvert]
= \Expect [ \lvert \langle \varphi \circ \ell, m_t - m + m - m_*\rangle\rvert]
= O (e^{-c t} + \sqrt \lambda).
\]
As noted in Remark~\ref{rem:si-contraction-rate},
the contraction rate $c$ depends on $1/\lambda$ exponentially,
rendering the above error bound impractical for applications.

This naturally raises the question of whether the method and results
of Theorem~\ref{thm:dist-si-stationary-mkv-invariant},
which address the static case
(i.e., the comparison between $\Law(X,m)$ and $m_*\otimes\delta_{m_*}$),
can be extended to the dynamical setting
(comparing $\Law(X_t, m_t)$ and $m_*\otimes\delta_{m_*}$).
Unfortunately, we are currently unable to establish the entropy estimate in
Section~\ref{sec:entropy-estimate} for the parabolic problem, so our approach
does not yet apply in this context.
We leave this as an open problem for future research.
\end{rem}

\section{Numerical application to training two-layer neural networks}
\label{sec:numerical}

Let us recall the structure of two-layer neural networks and introduce
the mean field model for it.
Consider an \emph{activation function} \(\varphi : \mathbb R \to \mathbb R\)
satisfying
\begin{equation*}
\begin{gathered}
\text{$\varphi$ is continuous and non-decreasing,} \\
\lim_{\theta\to -\infty} \varphi(\theta) = 0,
\quad \lim_{\theta\to +\infty} \varphi(\theta) = 1.
\end{gathered}
\end{equation*}
Define \(S = \mathbb R \times \mathbb R^{\din} \times \mathbb R\), where the
\emph{neurons} take values.
For each neuron \(x = (c,a,b) \in S\)
we define the \emph{feature map}:
\begin{equation*}
\mathbb R^{d_{\din}} \ni z \mapsto f(x; z) \coloneqq \tau(c) \varphi (a \cdot z
+ b) \in \mathbb R,
\end{equation*}
where \(\tau : \mathbb R \to [-L, L]\) is a \emph{truncation function} with the
\emph{truncation threshold} \(L \in (0,+\infty)\).
The two-layer neural network is nothing but
the averaged feature map parameterized by
\(N\) neurons
\(x^1, \ldots, x^N \in S\):
\begin{equation}
\label{eq:nn-ps-feature}
\mathbb R^{\din} \ni z \mapsto f^N (x^1,\ldots,x^N ; z) = \frac 1N
\sum_{i=1}^N f(x^i; z) \in \mathbb R.
\end{equation}
The training of neural network aims to minimize the distance between the
averaged output \eqref{eq:nn-ps-feature}
for $K$ data points $(z_1, \ldots, z_K)$ and their labels $(y_1, \ldots, y_K)$,
that is, the objective function of the minimization reads
\begin{equation}
\label{eq:nn-ps-loss}
F_\textnormal{NNet}^N (x^1,\ldots,x^N) =
\frac{N}{2K} \sum_{i=1}^K
\bigl\lvert y_i - f^N(x^1,\ldots,x^N; z_i)\bigr\rvert^2 .
\end{equation}
This objective function is of high dimension and non-convex,
and this difficulty motivates the recent studies,
see for example \cite{MMNMF, ChizatBachGlobal, HRSS} among others,
to lift the finite-dimensional optimization \eqref{eq:nn-ps-loss}
to the space of probability measures
and to consider the following mean field optimization:
\begin{equation*}
\inf_{m\in \mathcal{P}_2(S)} F_\textnormal{NNet}(m), \quad
\text{where } F_\textnormal{NNet}(m) \coloneqq
\frac{1}{2K} \sum_{i=1}^K \bigl\lvert y_i - \Expect^{X \sim m} [f(X; z_i)]
\bigr\rvert^2.
\end{equation*}
The mean field loss functional \(F_\textnormal{NNet}\) is apparently convex.
Given that the activation and truncation functions $\varphi$, $\tau$
have bounded derivatives of up to fourth order,
it has been proved in \cite[Proposition 4.4]{ulpoc} that the minimum
of the entropy-regularized mean field optimization problem
\[
\inf_{m\in \mathcal{P}_2(S)} F_\textnormal{NNet}(m)
+ \frac{\sigma^2}{2}H\bigl(m \big|\mathcal{N}(0,\gamma^{-1})\bigr)
\]
can be attained by the invariant measure of the mean field Langevin dynamics:
\begin{equation}\label{eq:MFL}
\dd X_t = -D_m F\bigl(\Law(X_t), X_t\bigr) \dd t + \sigma \dd W_t,
\end{equation}
where the mean field potential functional reads
\[
F (m) \coloneqq F_\textnormal{NNet}(m)
+ \frac{\sigma^2\gamma}{2} \int |x|^2 m(\dd x).
\]
By defining
\begin{align*}
&\ell^0(x)\coloneqq |x|^2,\qquad
\ell^i(x)\coloneqq f(x, z_i) - y_i,\quad\text{for}~i =1,\ldots, K,\\
&R^{K+1}\ni \theta = (\theta_0,\theta_1,\ldots, \theta_K)
\mapsto \Phi(\theta)\coloneqq \frac{\sigma^2\gamma}{2} \theta_0
+ \frac{1}{2K}\sum_{i=1}^K|\theta_i|^2,
\end{align*}
we note that the mean field potential functional is of the form:
\[
F(m) = \Phi(\langle \ell, m\rangle),
\quad\text{where}~\ell\coloneqq (\ell^0, \ell^1,\ldots, \ell^K),
\]
as in the gradient case investigated in Sections~\ref{subsec:gradientcase}.

In order to simulate the invariant measure
of the mean field Langevin dynamics \eqref{eq:MFL},
one usually turns to the corresponding $N$-particle system:
\begin{equation}\label{eq:N particle}
\dd X^j_t = \bigl(-\nabla_j F_\textnormal{NNet}^N (X^1_t, \ldots, X^N_t)
- \sigma^2\gamma X^j_t \bigr) \dd t +\sigma \dd W^j_t,
\quad\text{for}~j=1,\ldots,N
\end{equation}
The uniform-in-time propagation of chaos results obtained
in \cite{ulpoc, SNWUniform} ensure
that the marginal distributions $\Law(X^1_t, \ldots, X^N_t)$
of the $N$-particle system are close to
those of the mean field Langevin dynamics uniformly on the whole time horizon,
and can efficiently approximate mean field invariant measure
provided that $t$ and $N$ are both large enough.

Note that the $N$-particle system \eqref{eq:N particle} is
nothing but a regularized and noised gradient descent flow for $N$ neurons.
In contrast, the self-interacting diffusion
\begin{equation}
\label{eq:selfinteracting_numerics}
\begin{aligned}
\dd X_t &= -\sum_{i=0}^K \nabla_i \Phi\bigl(Y^0_t, Y^1_t,\ldots, Y^K_t\bigr)
\nabla \ell^i(X_t) \dd t
+ \sigma \dd W_t \\
&= -\biggl(\frac{1}{K}\sum_{k=1}^K Y_t^k\nabla f(X_t,z_k)
+\sigma^2\gamma X_t\biggr) \dd t + \sigma \dd W_t, \\
\dd Y^i_t &= \lambda \bigl( \ell^i(X_t)
- Y_t^i\bigr)\dd t,\quad\text{for}~i=1,\ldots,K,
\end{aligned}
\end{equation}
introduces an innovative alternative algorithm
for training two-layer neural networks,
in which the algorithm iterations impact only a single neuron.

\paragraph{Setup.}
We aim to train a neural network to approximate
the non-linear elementary function
$\mathbb{R}^2\ni z= (z_1, z_2)
\mapsto g(z)\coloneqq\sin{2\pi z_1} + \cos{2\pi z_2}$.
Note that in this numerical example $\din=2$
and therefore $S=\mathbb R^{1+2+1}$.
We draw $K$ points $\{z_i\}^K_{k=1}$ sampled according
to the uniform distribution on $[0,1]^2$ and compute the corresponding labels
$y_k=g(z_k)$ to form our training data $\{z_k, y_k\}_{k=1}^K$.
We fix the truncation function $\tau$
by $\tau(\theta)=(\theta \wedge 30) \vee -30$
and the sigmoid activation function $\varphi$
by $\varphi(\theta)=1/\bigl(1+\exp(-\theta)\bigr)$.
The Brownian noise has volatility $\sigma$ such that $\sigma^2\!/2=0.05$,
and the regularization constant $\gamma$ is fixed
at $\gamma=0.0025$ in the experiment.
The initial value $X_0 = (C_0, A_0, B_0)$,
taking values in $ S= \mathbb R^{1+2+1}$,
is sampled from the normal distribution
 $m_0=\mathcal{N}(0,10^2 \times \Id_{4 \times 4})$
in four dimensions.
To observe the impact of the self-interacting coefficient $\lambda$,
we run the simulation of \eqref{eq:selfinteracting_numerics}
for different $\lambda$ equal to $4^{-p}$ for $p=4, \ldots, 8$.
Furthermore, to compare with these results with fixed $\lambda$,
we choose the non-increasing piecewise constant function
$\lambda_\textnormal{a}$
such that $\lambda_\textnormal{a}(t)=4^{-i}$ on successive intervals
of length $\delta T_i=4^i$, for $i=2,\ldots,11$,
and train the neural network along the annealing scheme:
\begin{equation}
\label{eq:annealing_numerics}
\begin{aligned}
\dd X_t &= -\biggl(\frac{1}{K}\sum_{k=1}^K Y_t^k\nabla f(X_t,z_k)
+\sigma^2\gamma X_t\biggr) \dd t +\sigma \dd W_t,\\
\dd Y^i_t&= \lambda_\textnormal{a}(t) \bigl( \ell^i(X_t) - Y_t^i\bigr) \dd t,
\quad\text{for}~i=1,\ldots,K.
\end{aligned}
\end{equation}
We simulate both the constant and dynamic-$\lambda$
self-interacting diffusions \eqref{eq:selfinteracting_numerics},
\eqref{eq:annealing_numerics} by the Euler scheme,
as described in Section~\ref{sec:algorithm},
on a long time horizon till the terminal time $T=10^6$,
with the discrete time step $\delta t=0.5$.

\begin{figure}[!tb]
\centering
\includegraphics[width=0.75\textwidth]{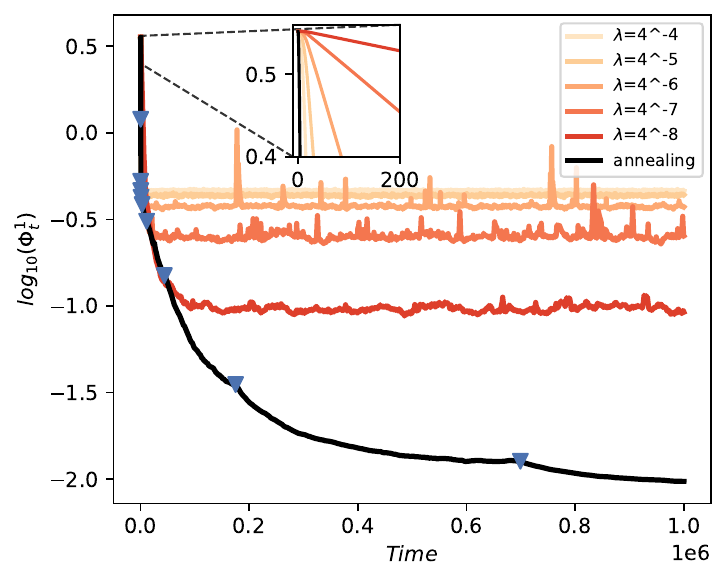}
\caption{Averaged over 100 repetitions losses for fixed values
of $\lambda$ and for discrete annealing.}
\label{fig:results}
\end{figure}

\paragraph{Results and discussions.}
We repeat the simulations mentioned above
for fixed $\lambda$'s
and dynamic $\lambda_\textnormal{a}$ all
for $100$ times and compute the averaged loss $\Phi(Y_t)$ at each time $t$
and plot its evolution in Figure~\ref{fig:results}.
On the annealing scheme curve, we include triangles to visually
indicate the points at which there are changes on the values
of \(\lambda_\textnormal{a}\).
We observe that the value of $\Phi(Y_t)$ first decays exponentially,
and the speed of initial decay depending on the value of $\lambda$.
More precisely, the bigger is the value of $\lambda$,
the faster is the initial decay.
In particular, this remains true for the annealed process
as it starts from a bigger value $\lambda_\textnormal{a}(0)$.
We notice that such phenomenon is in line
with our theoretical results in Theorem~\ref{thm:si-contraction}.
In the long run, when fixing $\lambda$, the value of $\Phi(Y_t)$ stabilizes
at a level sensible with respect to the value of $\lambda$.
We notice that the smaller is the value of $\lambda$,
the better is the final performance.
These facts are in line with our discovery
in Theorems~\ref{thm:si-contraction}
and \ref{thm:dist-si-stationary-mkv-invariant}.
Finally, the training exhibits the best performance
when implementing the discrete annealing.
The loss $\Phi(Y_t)$ continues to decrease
as the value of $\lambda_\textnormal{a}(t)$ diminishes.

\section{A Curie--Weiss model}
\label{sec:curie-weiss}

In this section, we present and study a simple Curie--Weiss model,
i.e., a mean field model of ferromagnets,
which has possibly multiple invariant measures.
In particular, we show that in this case,
the last claim of Theorem~\ref{thm:dist-si-stationary-mkv-invariant} provides
informations on the concentration of the self-interacting stationary measure.

Let $\ell_0 : \mathbb R \to \mathbb R$
be a smooth, odd, increasing function in $\mathcal C^1 \cap W^{1,\infty}$.
For a probability measure $m \in \mathcal P(\mathbb R)$,
consider the mean field energy
\[
F (m) = - \frac 12 \biggl(\int \ell_0 (x)m(\dd x)\biggr)^{\!2}
+ \frac 12 \int x^2 m(\dd x).
\]
By setting
\begin{align*}
\ell(x) &\coloneqq \bigl( \ell_0 (x), \lvert x\rvert^2\!/2 \bigr)^\top, \\
\Phi(y_0, y_1) &\coloneqq - \frac 12 \lvert y_0\rvert^2 + y_1,
\end{align*}
we have $F(m) = \Phi(\langle \ell, m\rangle)$.
So we are in the cylindrical setting of Assumption~\ref{assu:finite-dim-gd}
with the range set of $\ell$ being defined by
\[
\mathcal K \coloneqq [ - \lVert \ell_0\rVert_{L^\infty},
\lVert \ell_0\rVert_{L^\infty} ] \times \mathbb R.
\]
Moreover, as the corresponding intrinsic derivative, or the drift, writes
\[
b(m,x) = - D_m F(m,x)
= \langle \ell_0, m\rangle \ell'_0(x) - x,
\]
we can verify Assumption~\ref{assu:si-contraction}
with the modulus of continuity
\[
\omega (r) = \sup_{x,x' \in \mathbb R : \lvert x - x'\rvert \leqslant r}
\lvert \ell_0 (x) - \ell_0(x')\rvert.
\]
This implies, by Corollary~\ref{cor:si-stationary-exist-unique},
that the self-interacting process has a unique invariant measure,
which we denote by $\rho^\lambda = \rho$.
Arguing as in the proof of Proposition~\ref{prop:example-dynamics},
we can verify the conditions
of Theorem~\ref{thm:dist-si-stationary-mkv-invariant}
for the choice of $\Phi$, $\ell$ and $\rho$ above.
Especially, the probability measures
\[
\hat m_{(y_0,y_1)}(\dd x)
\propto \exp \biggl( -\frac 12 \lvert x\rvert^2
+y_0 \ell_0(x) \biggr) \dd x
\]
satisfy a uniform LSI thanks to the boundedness of $y_0$
and the Holley--Stroock perturbation lemma.

Before applying Theorem~\ref{thm:dist-si-stationary-mkv-invariant},
we first give a characterization of the invariant measure
for the McKean--Vlasov dynamics \eqref{eq:mkv}.
We discuss a special case and then move to general discussions.

\begin{enumerate}
\item If $\lVert \ell' \rVert_{L^\infty} < 1$,
then by the result of \cite{WangLandauType},
we already know that the McKean--Vlasov dynamics \eqref{eq:mkv}
has a unique invariant measure,
which is the centered Gaussian of variance $1/2$, i.e., $\mathcal N(0,1/2)$.
This case corresponds to the weak interaction regime studied in
\cite{DJLEmpApprox}.

\item
In the general case where $\lVert \ell' \rVert_{L^\infty}$ is not restricted,
the invariant measures of \eqref{eq:mkv} correspond to
fixed points of the one-dimensional mapping
\[\mathbb R \ni y_0 \mapsto \Pi_0(y_0) = \hat y_0 \in \mathbb R\]
given by
\[
\Pi_0(y_0) = \frac{\int \ell_0(x) \exp \bigl( 2y_0 \ell_0(x)
- \lvert x\rvert^2 \bigr) \dd x }
{\int \exp \bigl( 2y_0 \ell_0(x) - \lvert x\rvert^2 \bigr) \dd x }
\]
That is to say, if $y_0$ satisfies $\Pi_0(y_0) = y_0$,
then the probability measure proportional to
$\exp \bigl( 2y_0 \ell_0(x) - \lvert x\rvert^2 \bigr) \dd x$
is invariant to \eqref{eq:mkv}; and vice versa.
Due to the oddness of $\ell_0$, the mapping $\Pi_0$ is odd.
In particular, $\Pi_0(0) = 0$
and we know that $\mathcal N(0,1/2)$ is always invariant.

If $\Pi_0'(0) > 1$,
then by the fact that $\lVert \Pi_0\rVert_{L^\infty}
\leqslant \lVert \ell_0\rVert_{L^\infty} < \infty$
and the intermediate value theorem,
there exists $y_0 > 0$ such that
$\Pi_0 (y_0) = y_0$ and $\Pi_0(-y_0) = -y_0$.
That is to say there exists at least three invariant measures,
two of which are, in physicists' language, ``symmetry-breaking'' phases,
and the centered Gaussian measure is the ``symmetry-preserving'' phase.
Moreover, the centered measure corresponding to $y_0=0$ should be unstable
as it is a local maximum point for the free energy landscape
$y_0 \mapsto \frac 12 y_0^2 - \frac 12
\log \int \exp \bigl( 2y_0 \ell_0(x) - \lvert x\rvert^2 \bigr) \dd x$.
\end{enumerate}

We now turn to the study of the stationary measure $\rho^\lambda$
of the self-interacting process \eqref{eq:si}.
Since $\Phi(y) = - \lvert y_0\rvert^2\!/2 + y_1$
is linear in its second coordinate,
the last claim of Theorem~\ref{thm:dist-si-stationary-mkv-invariant}
implies that
\[
\int \lvert \hat y_0 - y_0 \rvert^2 \rho^2(\dd y)=
-\int (\hat y-y)^\top\nabla^2\Phi(y)(\hat y-y) \rho^2(\dd y)\leqslant C \lambda,
\]
where $C$ is a constant depending only on $\ell_0$
and as we recall, $y_0$ is the first coordinate of $y$.
In other words, the stationary measure $\rho^\lambda$
solves the self-consistency equation
\[
\hat y_0 = y_0
\]
up to an error of order $O(\sqrt\lambda)$.
Denote the set of fixed points of $y_0 \mapsto \Pi_0(y_0) = \hat y_0$ by $S$.
If the set $S$ is finite, and if for each $s \in S$ we have
\[
\Pi_0'(s) \neq 1,
\]
then there exists $c > 0$ such that for all $y_0 \in \mathbb R$,
\[
\lvert \hat y_0 - y_0 \rvert \geqslant c \min\bigl(d(y_0, S),1\bigr),
\]
where $d(\cdot,S)$ indicates the distance to the set $S$.
In this case, we have
\[
\int \min\bigl(d(y_0,S),1\bigr)^2 \rho^2(\dd y) = O(\lambda).
\]
That is to say, for small $\lambda$,
the random variable $Y = (Y_0,Y_1)$, distributed as the second component
of the stationary measure $\rho^\lambda$,
has $Y_0$ concentrated near the solutions to the self-consistency equation,
which correspond to invariant measures of the McKean--Vlasov dynamics.
However, the last claim of Theorem~\ref{thm:dist-si-stationary-mkv-invariant}
is not sufficient to show that $Y_0$ is only concentrated around,
or in a way ``selects'', the physically stable phases
that are minimizers of a free energy functional.
We refer readers to \cite{BRSIDiffusions3} for qualitative results
on such selection mechanism.

\section{Proof of Theorem~\ref{thm:si-contraction}}
\label{sec:si-contraction}

We first note that as the metric space
$\bigl(\mathcal P(\mathbb R^d), W_\omega\bigr)$
is separable, we do not have measurability issues.
We refer readers to \cite[Chapter~2]{LedouxTalagrandProbaInBanach}
for details.

Recall that the self-interacting dynamics \eqref{eq:si} writes
\[
\begin{aligned}
\dd X_t &= b(m_t, X_t) \dd t + \dd B_t, \\
\dd m_t &= \lambda (- m_t + \delta_{X_t}) \dd t
\end{aligned}
\]
and similarly for the other dynamics $(X', m')$
driven by another Brownian motion $B'$.
Fix $n \in \mathbb N$.
Let $\refc : \mathbb R^d \times \mathbb R^d \to [0,1]$
be a Lipschitz continuous function such that
$\sync \coloneqq \sqrt{1 - \refc^2}$ is also Lipschitz continuous and
\[
\refc(x,x') = \begin{cases}
1 & \text{if}~|x - x'| \geqslant 2n^{-1}, \\
0 & \text{if}~|x - x'| \leqslant n^{-1}.
\end{cases}
\]
We couple the two dynamics $(X,m)$, $(X',m')$ by
\begin{align*}
\dd B_t &= \refc (X_t, X'_t) \dd B^{\refc}_t
+ \sync(X_t, X'_t) \dd B^{\sync}_t, \\
\dd B'_t &= \refc (X_t, X'_t) \bigl(1 - 2e_te_t^\top\bigr) \dd B^{\refc}_t
+ \sync(X_t, X'_t) \dd B^{\sync}_t,
\end{align*}
where $B^{\refc}$, $B^{\sync}$ are independent Brownian motions
and $e_t$ is the $d$-dimensional vector defined by
\[
e_t = \begin{cases}
\frac{X_t - X'_t}{|X_t - X'_t|} & \text{if}~X_t \neq X'_t, \\
(1,0,\ldots,0)^\top & \text{otherwise}.
\end{cases}
\]

Subtracting the dynamical equations of $X$, $X'$
and denoting $\delta X = X - X'$, we obtain
\[
\dd \delta X_t = \bigl(b(m_t, X_t) - b(m'_t, X'_t)\bigr) \dd t
+ 2 \refc(X_t, X'_t) e_t \dd W_t,
\]
where $W_t \coloneqq \int_0^t e_t^\top \dd B^{\refc}_t$
and is a one-dimensional Brownian motion by Lévy's characterization.
The absolute value of the semimartingale $\delta X_t$
admits the decomposition
$\dd |\delta X_t| = \dd A^{|\delta X|}_t + \dd M^{|\delta X|}_t$ with
\begin{align*}
\dd A^{|\delta X|}_t &\leqslant - |\delta X_t| \kappa(|\delta X_t|) \dd t
+ L W_\omega(m_t,m'_t) \dd t, \\
\dd M^{|\delta X|}_t &= 2 \refc(X_t, X'_t) \dd W_t.
\end{align*}
For the $m$ variable, we have
\[
\dd (m_t - m'_t) = - \lambda (m_t - m'_t) \dd t
+ \lambda (\delta_{X_t} - \delta_{X'_t}) \dd t.
\]
Thus, by considering the (random) $W_\omega$-optimal coupling
at each $t$, we get
\[
\dd W_\omega (m_t, m'_t) \leqslant - \lambda W_\omega(m_t, m'_t) \dd t
+ \lambda \omega (\lvert \delta X_t\rvert) \dd t.
\]
Hence the process
\[
r_t = \lvert\delta X_t\rvert + \frac{2 L}{\lambda} W_\omega(m_t, m'_t)
\]
admits the decomposition $\dd r_t = \dd A^r_t + \dd M^r_t$ with
\begin{align*}
\dd A^r_t
&\leqslant \bigl( - \lvert\delta X_t\rvert \kappa(|\delta X_t|)
+ 2 L \omega(|\delta X_t|) - L W_\omega(m_t,m'_t) \bigr) \dd t, \\
\dd M^r_t &= 2 \refc (X_t, X'_t) \dd W_t.
\end{align*}

Let $f : [0,\infty)\to[0,\infty)$ be a $\mathcal C^2$ function
to be determined in the following.
We define $\rho_t = f(r_t)$.
The process $\rho_t$ admits the decomposition
$\dd\rho_t = \dd A^\rho_t + \dd M^\rho_t$ with
\begin{align*}
\dd A^\rho_t &\leqslant \bigl( - |\delta X_t| \kappa(|\delta X_t|)
+ 2L\omega(|\delta X_t|) - L W_\omega(m_t,m'_t) \bigr)
f'_-(r_t) \dd t \\
&\hphantom{\leqslant{}} \quad + 2\refc (X_t,X'_t)^2 f''(r_t) \dd t \\
&\leqslant - r_t \tilde K(r_t) f'_-(r_t) \dd t
+ 2 \refc (X_t, X'_t)^2 f''(r_t)\dd t
\end{align*}
for the function $\tilde K : (0,\infty) \to \mathbb R$ defined by
\begin{align*}
\tilde K(r)
&\coloneqq \inf_{\substack{x + 2L\lambda^{-1} y = r \\ x, y > 0}}
\frac{x \kappa(x) - 2L \omega(x) + Ly}{r} \\
&\geqslant \inf_{\substack{x + 2L\lambda^{-1} y = r \\ x, y > 0}}
\frac{\kappa_0 x + Ly - M_b - 2L M_\omega}{r} \\
&\geqslant \min \biggl( \kappa_0, \frac{\lambda}{2} \biggr)
- \frac{M_b + 2L M_\omega}{r} \\
&\eqqcolon K_0 - \frac{M}{r}
\eqqcolon K(r).
\end{align*}
Thus, we have shown
\[
\dd A^\rho_t \leqslant - r_t K(r_t) f'_-(r_t) \dd t
+ 2 \refc (X_t, X'_t)^2 f''(r_t) \dd t.
\]
Following Du et al.\ \cite{DJLSequentialPOC},
we choose the function $f : [0,\infty) \to [0,\infty)$ by requiring
\[
f'(r) = \frac 12 \int_r^\infty s \exp \biggl(
- \frac 12 \int_r^s \tau K(\tau) \dd\tau \biggr) \dd s.
\]
and $f(0) = 0$.
The function $f$ is indeed $\mathcal C^2$ continuous,
and, according to \cite[Lemma 5.1]{DJLSequentialPOC},
it is also concave and satisfies
\[
2f''(r) - rK(r) f'(r) + r = 0
\]
and
\[
\frac 1{K_0} \leqslant f'(r),\, \frac{f(r)}{r} \leqslant f'(0)
\]
for all $r > 0$.
Plugging in the expression for $K$, we obtain
\begin{align*}
f'(0)
&= \frac 12 \int_0^\infty s
\exp \biggl( - \frac{K_0s^2}{4} + \frac{Ms}{2} \biggr) \dd s \\
&= \frac 12 \exp \biggl( \frac{M^2}{4K_0}\biggr) \int_0^\infty s
\exp \biggl( - \frac{K_0(s - \frac M{K_0})^2}{4} \biggr) \dd s \\
&= \frac 12 \exp \biggl( \frac{M^2}{4K_0}\biggr) \int_{- M/\sqrt{2K_0}}^\infty
\Biggl( \frac{2t}{K_0} + M \frac{2^{1/2}}{K_0^{3/2}} \Biggr)
\exp \biggl( - \frac{t^2}{2} \biggr) \dd t \\
&\leqslant \frac 12 \exp \biggl( \frac{M^2}{4K_0}\biggr)
\Biggl( \frac{2}{K_0} \exp \biggl( - \frac{M^2}{4K_0} \biggr)
+ 2M \frac{\pi^{1/2}}{K_0^{3/2}} \Biggr) \\
&\leqslant \frac{1}{K_0}
+ \frac{2M}{K_0^{3/2}}\exp \biggl( \frac{M^2}{4K_0} \biggr).
\end{align*}

For $|\delta X_t| \geqslant 2n^{-1}$, we have $\refc_t (X_t, X'_t) = 1$,
and by the previous construction,
\[
\dd f(r_t) = \dd A^{\rho}_t + \dd M^{\rho}_t
\leqslant - r_t \dd t + \dd M^{\rho}_t
\leqslant - \frac{f(r_t)}{K_0^{-1} + 2M K_0^{-3/2} \exp (M^2\!/4K_0)} \dd t
+ \dd M^{\rho}_t.
\]
For $|\delta X_t| < 2n^{-1}$, we proceed differently.
Let $\omega_b$ denote the uniform modulus of equicontinuity
of the vector fields $x \mapsto b(m,x)$.
The absolute continuous part of $r_t$ satisfies
\begin{align*}
\dd A^r_t &\leqslant \bigl( \omega_b(2n^{-1}) + 2L \omega(2n^{-1})
- L W_\omega(m_t,m'_t) \bigr) \dd t \\
&= - L W_\omega(m_t,m'_t) \dd t + o(1) \dd t \\
&\leqslant - \frac{\lambda}{2} r_t \dd t
+ o(1) \dd t,
\end{align*}
where $o(1)$ denotes a term that tends to $0$ when $n \to \infty$.

Now we combine the two cases.
Define a sequence of functions by
\[
f_n(r) = \begin{cases}
f(r), &\text{if $r \geqslant 2n^{-1}$,} \\
\frac{f(2n^{-1})}{2n^{-1}}r, &\text{if $r < 2n^{-1}$.}
\end{cases}
\]
Each function $f_n$ is concave and satisfies, by the arguments above,
\[
\dd \Expect[f_n(r_t)]
\leqslant - c \Expect[f_n(r_t)] \dd t + o(1) \dd t,
\]
where
\begin{align*}
c &\coloneqq \min \biggl(
\frac{1}{K_0^{-1} + 2MK_0^{-3/2} \exp(M^2\!/4K_0)}, \frac{\lambda}{2}
\biggr) \\
&= \frac{1}{K_0^{-1} + 2MK_0^{-3/2} \exp(M^2\!/4K_0)}.
\end{align*}
Applying Grönwall's lemma, we obtain
\[
\Expect [f_n(r_t)] \leqslant e^{-ct} \Expect [f_n(r_0)] + o(1).
\]
Since $\lim_{n\to\infty}\Expect[f_n(r_t)] = \Expect[f(r_t)]$
by dominated convergence,
taking the limit $n \to \infty$ completes the proof.
\qed

\section{Proof of Theorem~\ref{thm:dist-si-stationary-mkv-invariant}}
\label{sec:gd}

This section consists of four subsections.
We show a series of intermediate,
and sometimes technical, lemmas and propositions in the first three subsections
before proving the theorem in the last subsection.

\subsection{Elliptic equation for the stationary measure}

We first note that the stationary measure $P$
solves a partial differential equation in the following weak sense.

\begin{prop}
\label{prop:si-stationary-pde-weak-sol}
Let $\mathcal C^{1,2}_\textnormal{b}$ denote the class of functionals
$\phi : \mathcal P (\mathbb R^d) \times \mathbb R^d \to \mathbb R$
with continuous spatial derivatives $\nabla_x \phi$, $\nabla_x^2 \phi$
and a bounded linear functional derivative
$\frac{\delta \phi}{\delta m}:
\mathcal P(\mathbb R^d) \times \mathbb R^d \times \mathbb R^d
\to \mathbb R$ satisfying
\begin{align*}
&\forall m \in \mathcal P(\mathbb R^d),~\forall x \in \mathbb R^d,&
\lvert \nabla_x \phi(m,x) \rvert
+ \bigl| \nabla_x^2 \phi(m,x) \bigr| &\leqslant C, \\
&\forall m \in \mathcal P(\mathbb R^d),~\forall x,x' \in \mathbb R^d,&
\biggl| \frac{\delta \phi}{\delta m} (m,x,x') \biggr| &\leqslant C,
\end{align*}
for some $C \geqslant 0$.
Under the settings of the theorem,
let $(X,m)$ be a random variable distributed as the stationary measure $P$.
Then we have, for all $\phi \in \mathcal C^{1,2}_\textnormal{b}$,
\begin{multline*}
\Expect\biggl[
\frac 12 \Delta_x \phi (m, X) + b(m, X) \cdot \nabla_x \phi(m,X) \\
+ \lambda \int \frac{\delta\phi}{\delta m}(m,X,x') (\delta_X - m)(\dd x')
\biggr] = 0.
\end{multline*}
\end{prop}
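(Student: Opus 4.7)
The plan is to apply an Itô/chain-rule formula to $\phi(m_t, X_t)$ along the self-interacting dynamics \eqref{eq:si}, take expectations against the stationary law $P$, and exploit stationarity together with the boundedness hypotheses that come with $\mathcal C^{1,2}_\textnormal{b}$.

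First, I would start from the observation that, conditionally on a trajectory $(X_s)_{s\in[0,t]}$, the measure-valued component $m_t$ is the deterministic $\mathcal C^1$ path in $\mathcal P(\mathbb R^d)$ solving $\partial_t m_t = \lambda(\delta_{X_t} - m_t)$. So for $\phi \in \mathcal C^{1,2}_\textnormal{b}$ and $x$ fixed, the chain rule for linear functional derivatives yields
\[
\frac{\dd}{\dd t} \phi(m_t, x) = \lambda \int \frac{\delta \phi}{\delta m}(m_t, x, x') (\delta_{X_t} - m_t)(\dd x').
\]
Combining this with the standard Itô formula in the $X$-variable — legitimate because $\nabla_x\phi$ and $\nabla_x^2\phi$ are bounded — one obtains
\[
\dd \phi(m_t, X_t) = \bigl[\mathcal L \phi\bigr](m_t, X_t) \dd t + \nabla_x \phi(m_t, X_t) \cdot \dd B_t,
\]
where $\mathcal L\phi$ is precisely the integrand in the statement of the proposition.

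Next, the boundedness of $\nabla_x\phi$ guarantees that the stochastic integral is a genuine martingale, so taking expectations and integrating yields
\[
\Expect\bigl[\phi(m_t, X_t)\bigr] - \Expect\bigl[\phi(m_0, X_0)\bigr]
= \int_0^t \Expect\bigl[(\mathcal L\phi)(m_s, X_s)\bigr] \dd s.
\]
Now I would invoke stationarity: since $\Law(X_s, m_s) = P$ for every $s \geqslant 0$ when $\Law(X_0, m_0) = P$, the left-hand side vanishes identically in $t$, and the integrand on the right-hand side is constant in $s$, equal to $\Expect[(\mathcal L\phi)(X, m)]$ with $(X, m) \sim P$. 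Dividing by $t$ and sending $t \to 0$ (or simply reading off the constant) gives the claimed identity.

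The only genuinely delicate point is justifying that all the pieces of $\mathcal L\phi$ are integrable against $P$ so that the expectation is well defined, and that Fubini is legitimate in the measure-derivative term $\int \frac{\delta \phi}{\delta m}(m, X, x')(\delta_X - m)(\dd x')$. Both follow immediately from the uniform bounds built into the definition of $\mathcal C^{1,2}_\textnormal{b}$: $\nabla_x\phi$, $\nabla_x^2\phi$ and $\frac{\delta\phi}{\delta m}$ are uniformly bounded, so the integrand in $\mathcal L\phi$ is bounded by $C(1 + |b(m, X)|)$, which is $P$-integrable because $P \in \mathcal P_1\bigl(\mathbb R^d \times \mathcal P(\mathbb R^d)\bigr)$ and $b$ has at most linear growth under the standing structural assumptions of this subsection. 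I expect no conceptual obstacle beyond justifying this Itô-type chain rule along a measure-valued path; once that is in place the rest is a one-line consequence of stationarity.
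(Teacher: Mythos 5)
Your proposal matches the paper's approach precisely — the paper omits this proof, noting only that it ``follows directly from expanding the difference $\Expect[\phi(m_t,X_t)] - \Expect[\phi(m_0,X_0)]$ by Itô-type calculus,'' which is exactly the route you take, with the martingale term killed by boundedness of $\nabla_x\phi$ and the measure-derivative term supplied by the ODE $\partial_t m_t = \lambda(\delta_{X_t}-m_t)$. One small imprecision: under Assumption~\ref{assu:finite-dim-gd} the drift $b(m,x)=-\nabla\Phi(\langle\ell,m\rangle)\cdot\nabla\ell(x)$ can grow bilinearly in $x$ and $\langle\ell,m\rangle$ rather than ``at most linearly'' as you state, but the finite fourth moment required of $P$ in the theorem's hypotheses is precisely what makes $\Expect\bigl[\lvert b(m,X)\rvert\,\lvert\nabla_x\phi(m,X)\rvert\bigr]$ finite, so your argument is not actually affected.
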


We omit the proof of the proposition, which follows directly
from expanding the difference
$\Expect [\phi(m_t, X_t)] - \Expect [\phi(m_0, X_0)]$
by It\=o-type calculus.

The infinite-dimensional nature of the PDE above makes its analysis difficult,
and in the following we approach the problem by studying
a finite-dimensional projection of it.
Under Assumption~\ref{assu:finite-dim-gd}, define the functions
\begin{align*}
\beta(y,x) &= - \nabla \Phi(y) \cdot \nabla \ell(x)
= - \sum_{\nu = 1}^D \nabla_\nu \Phi(y) \nabla \ell^\nu(x), \\
V(y,x) &= \nabla \Phi(y) \cdot \ell (x)
= \sum_{\nu = 1}^D \nabla_\nu \Phi(y) \ell^\nu (x).
\end{align*}
They verify $\beta(y,x) = - \nabla_x V(y,x)$.
Note that, if $m_y$ is a measure satisfying $\int \ell(x) m_y(\dd x) = y$,
then we have
\begin{align*}
\beta (y,x) &= b(m_y, x), \\
V(y,x) &= \frac{\delta F}{\delta m}(m_y,x).
\end{align*}
Let $(X,m)$ be distributed as the stationary measure $P$
and consider the random variable
$Y = \langle \ell, m\rangle = \int \ell(x) m(\dd x)$
valued in $\mathcal K$.
Applying Proposition~\ref{prop:si-stationary-pde-weak-sol}
to functionals of the following form:
\[
\phi(m,x) = \varphi \biggl( x, \int \ell(x') m(\dd x')\biggr),
\]
where $\varphi \in \mathcal D (\mathbb R^d \times \mathbb R^D)$,
we get that the joint distribution $\rho = \rho^\lambda \coloneqq \Law(X, Y)
\in \mathcal P(\mathbb R^d \times \mathcal K)$
satisfies the stationary degenerate Fokker--Planck equation
\begin{equation}
\label{eq:fp-static}
\frac 12 \Delta_x \rho
- \nabla_x \cdot \bigl(\beta(y,x) \rho\bigr)
- \lambda \nabla_y \cdot \Bigl( \bigl( \ell(x) - y \bigr) \rho \Bigr)
= 0.
\end{equation}
in the sense of distributions.
The fact that $P$ has finite fourth moment implies that
its projection $\rho$ satisfies the following moment condition:
\begin{equation}
\label{eq:rho-moment-condition}
\int ( \lvert x\rvert^4 + \lvert y \rvert^2 ) \rho(\dd x\dd y) < \infty.
\end{equation}
From the Fokker--Planck equation \eqref{eq:fp-static},
we get the following result.

\begin{lem}
\label{lem:grad-invariant}
Under the setting of the theorem, for every function
$\varphi \in \mathcal C^1(\mathbb R^{D};\mathbb R)$
whose gradient $\nabla \varphi$ is of linear growth,
we have
\[
\iint \nabla \varphi(y) \cdot \bigl( \ell(x) - y \bigr) \rho(\dd x\dd y)
= 0.
\]
\end{lem}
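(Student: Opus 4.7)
The plan is to recognize the identity as the finite-dimensional projection of
the stationarity of $P$ onto the $y$-variable. Formally one would test the
distributional Fokker--Planck equation \eqref{eq:fp-static} against $\varphi(y)$:
the $x$-derivative terms vanish and the $y$-divergence term yields exactly the claim.
The entire work is the truncation argument needed to handle the lack of compact support
and the linear growth of $\nabla\varphi$.

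I would first prove the identity for $\varphi\in\mathcal C^1(\mathbb R^D)$ with
$\varphi$ and $\nabla\varphi$ bounded. Let $\chi_R(x)\coloneqq\chi(x/R)$ for
$\chi\in\mathcal C_c^\infty(\mathbb R^d;[0,1])$ equal to $1$ on $\{|x|\le 1\}$
and vanishing on $\{|x|\ge 2\}$. Testing \eqref{eq:fp-static} against
$\chi_R(x)\varphi(y)$ (mollified in $y$ to land in $\mathcal C_c^\infty$ if necessary)
and integrating by parts yields
\begin{align*}
0 &= \tfrac12\iint\Delta\chi_R(x)\varphi(y)\rho(\dd x\dd y)
- \iint\beta(y,x)\cdot\nabla\chi_R(x)\varphi(y)\rho(\dd x\dd y) \\
&\quad{}+ \lambda\iint\chi_R(x)\nabla\varphi(y)\cdot(\ell(x)-y)\rho(\dd x\dd y).
\end{align*}
With $|\nabla\chi_R|\le C/R$ and $|\Delta\chi_R|\le C/R^2$ supported in
$\{R\le|x|\le 2R\}$, the boundedness of $\varphi$, and
$|\beta(y,x)|\le C(1+|y|)(1+|x|)$ (from $\|\nabla^2\Phi\|_\infty<\infty$ and the
linear growth of $\nabla\ell$), together with the integrability of
$(1+|x|)(1+|y|)$ against $\rho$ afforded by \eqref{eq:rho-moment-condition}
and Cauchy--Schwarz, the first two integrals vanish as $R\to\infty$ by dominated convergence.
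The same theorem closes the third term and gives
$\iint\nabla\varphi(y)\cdot(\ell(x)-y)\rho(\dd x\dd y)=0$.

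The general case follows by composition with a smooth truncation.
Pick a $\mathcal C^\infty$ radial map $T_R\colon\mathbb R^D\to\mathbb R^D$ satisfying
$T_R(y)=y$ on $\{|y|\le R\}$, $|T_R(y)|\le R+1$ everywhere, and
$\|\nabla T_R\|_\infty\le 1$, and set $\varphi_R\coloneqq\varphi\circ T_R$. Then
$\varphi_R\in\mathcal C^1$ is bounded and its gradient is bounded, so the previous
step applies and $\iint\nabla\varphi_R(y)\cdot(\ell(x)-y)\rho(\dd x\dd y)=0$ for
every $R$.

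To close the argument I would pass to the limit $R\to\infty$ by dominated convergence.
The pointwise convergence $\nabla\varphi_R(y)\to\nabla\varphi(y)$ is immediate since
$\nabla\varphi_R=\nabla\varphi$ on $\{|y|\le R\}$, and the uniform-in-$R$ estimate
$|\nabla\varphi_R(y)|\le C(1+|y|)$ follows from the linear growth of $\nabla\varphi$
together with $|T_R(y)|\le|y|+1$. Combined with $|\ell(x)-y|\le C(1+|x|^2+|y|)$
(the quadratic bound on $\ell$ coming from the linear growth of $\nabla\ell$), the integrand is
dominated by $C(1+|y|)(1+|x|^2+|y|)$, which lies in $L^1(\rho)$ by
\eqref{eq:rho-moment-condition} and Cauchy--Schwarz. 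The main obstacle throughout
is precisely this moment bookkeeping: the hypothesis affords only a finite second
moment in $y$ and a finite fourth moment in $x$, and it is essential that the
truncation $T_R$ preserve the linear growth of $\nabla\varphi$ uniformly in $R$
rather than introducing extra factors of $|y|$ through a multiplicative cutoff.
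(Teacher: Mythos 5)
Your proposal is correct and follows essentially the same approach as the paper: test the distributional Fokker--Planck equation \eqref{eq:fp-static} against $\varphi(y)$, observing that since $\varphi$ does not depend on $x$ the spatial-derivative terms drop out, and then justify the extension from compactly supported test functions to ones with the stated growth using the moment condition \eqref{eq:rho-moment-condition}. The paper's published argument is a single sentence that simply invokes the quadratic growth of $\varphi$ together with the moment bound; your two-stage truncation (spatial cutoff $\chi_R$ in $x$, then the composition $\varphi\circ T_R$ in $y$) supplies the details it leaves implicit, and the moment bookkeeping is correct: the dominating function $(1+|y|)(1+|x|^2+|y|)$ lies in $L^1(\rho)$ by Cauchy--Schwarz against the finite $\lvert x\rvert^4$- and $\lvert y\rvert^2$-moments. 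One cosmetic slip: the sign on the $\beta\cdot\nabla\chi_R$ term in your display should be $+$ after integration by parts, but since you only use its absolute value this does not affect the argument.
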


\begin{proof}[Proof of Lemma~\ref{lem:grad-invariant}]
Since its gradient $\nabla \varphi$ is of linear growth,
the function $\varphi$ is of quadratic growth.
Thanks to the fact that $\rho$ satisfies the moment condition
\eqref{eq:rho-moment-condition},
we can take the duality with $\varphi$ in the static Fokker--Planck equation
\eqref{eq:fp-static}, from which the desired result follows.
\end{proof}

\subsection{Entropy estimate}
\label{sec:entropy-estimate}

In this subsection, we show an entropy estimate for the stationary
measure $\rho$ by studying the Fokker--Planck equation \eqref{eq:fp-static}.

Denote the first and second marginal distributions
of $\rho$ by $\rho^{1}$, $\rho^{2}$ respectively.
For $\rho^2$-almost all $y \in \mathbb R^D$,
denote also the conditional distribution of the first variable
by $\rho^{1|2}(\cdot|y) : \mathcal B(\mathbb R^d) \to \mathbb R$.
Define
\begin{equation}
\label{eq:def-hat-rho}
\hat \rho (\dd x\dd y) \coloneqq \hat m_y (\dd x) \rho^2(\dd y)
= \frac 1{Z_y} \exp \bigl( - 2 V(y,x) \bigr) \dd x\,\rho^2(\dd y),
\end{equation}
for $Z_y = \int \exp \bigl( - 2V(y,x) \bigr) \dd x$.
Recall that $\hat m_y$ is the probability measure on $\mathbb R^d$
that satisfies a uniform LSI
according to Assumption~\ref{assu:m-hat-uniform-lsi}.

\begin{prop}
\label{prop:rho-hat-rho-entropy}
Under the setting of the theorem, we have
\[
H(\rho | \hat\rho)
\leqslant
\frac{C_\textnormal{LS}}{2}
\biggl(D + 2 \iint \bigl(\ell(x) - y\bigr)^\top \nabla^2\Phi(y)
\bigl(\ell(x) - y\bigr) \rho(\dd x\dd y)\biggr) \lambda,
\]
where $\hat \rho$ is the measure defined by \eqref{eq:def-hat-rho}.
\end{prop}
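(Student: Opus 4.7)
The plan is to combine the uniform log-Sobolev inequality of Assumption~\ref{assu:m-hat-uniform-lsi} with an entropy dissipation identity extracted from the stationary Fokker--Planck equation \eqref{eq:fp-static}. Using the disintegrations $\rho(\dd x\dd y) = \rho^{1|2}(\dd x|y)\rho^2(\dd y)$ and $\hat\rho(\dd x\dd y) = \hat m_y(\dd x)\rho^2(\dd y)$, I would first write $H(\rho|\hat\rho) = \int H\bigl(\rho^{1|2}(\cdot|y)\,\big|\,\hat m_y\bigr)\rho^2(\dd y)$, so the LSI yields
\[
H(\rho|\hat\rho)\leqslant \frac{C_\textnormal{LS}}{4}\iint |\nabla_x\log h|^2\,\rho(\dd x\dd y),\qquad h\coloneqq \frac{\dd\rho}{\dd\hat\rho}.
\]
It then remains to estimate this weighted $x$-Fisher information.

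Next, I would exploit the crucial identity $\nabla_x\log\hat m_y = -2\nabla_x V = 2\beta(y,x)$, which turns the gradient part of \eqref{eq:fp-static} into $\tfrac12\Delta_x\rho-\nabla_x\cdot(\beta\rho) = \tfrac12\nabla_x\cdot(\hat\rho\,\nabla_x h)$. The stationary equation thus becomes $\tfrac12\nabla_x\cdot(\hat\rho\,\nabla_x h) = \lambda\nabla_y\cdot\bigl((\ell-y)\rho\bigr)$. Testing against $\log h$ (after a mollification/truncation argument that uses the fourth-moment condition \eqref{eq:rho-moment-condition} to handle tails) and integrating by parts in the respective variables produces the dissipation identity
\[
\tfrac12\iint |\nabla_x\log h|^2\,\rho = \lambda\iint (\ell(x)-y)\cdot\nabla_y\log h\,\rho(\dd x\dd y).
\]

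The core step is evaluating the right-hand cross term. I would split $\nabla_y\log h = \nabla_y\log\rho - \nabla_y\log\hat m_y - \nabla_y\log\rho^2$ and treat each piece in turn: a direct integration by parts in $y$ using $\nabla_y\cdot(\ell(x)-y) = -D$ gives $\iint(\ell-y)\cdot\nabla_y\log\rho\,\rho = D$; the $\log\rho^2$ contribution vanishes via Lemma~\ref{lem:grad-invariant} applied to $\varphi = \log\rho^2$ (after smoothing); and for the $\log\hat m_y$ term, a direct computation using $\nabla_y\log Z_y = -2\nabla^2\Phi(y)\hat y(y)$ with $\hat y(y) = \langle\ell,\hat m_y\rangle$ yields $\nabla_y\log\hat m_y(x) = 2\nabla^2\Phi(y)\bigl(\hat y(y)-\ell(x)\bigr)$. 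Writing $\hat y-\ell = (\hat y-y)-(\ell-y)$, one piece produces the desired quadratic form $-2\iint(\ell-y)^\top\nabla^2\Phi(y)(\ell-y)\rho$, while the other is annihilated thanks to the key algebraic observation that $\nabla^2\Phi(y)\bigl(\hat y(y)-y\bigr)$ is a \emph{gradient} --- namely that of
\[
\varphi(y)\coloneqq -\tfrac12\log Z_y + \Phi(y) - y\cdot\nabla\Phi(y),
\]
as one verifies from the same expression for $\nabla_y\log Z_y$ together with $\nabla_y\bigl(y\cdot\nabla\Phi(y)\bigr) = \nabla\Phi(y) + \nabla^2\Phi(y)y$. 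Applying Lemma~\ref{lem:grad-invariant} to this $\varphi$ kills the cross term, leaving $\iint(\ell-y)\cdot\nabla_y\log h\,\rho = D + 2\iint(\ell-y)^\top\nabla^2\Phi(y)(\ell-y)\rho$, and inserting this into the dissipation identity followed by the LSI bound produces exactly the claim. The main technical obstacle is the regularity gymnastics needed to legitimately use $\log h$ and $\log\rho^2$ as test functions, which calls for an approximation by smooth cutoffs and careful exploitation of the fourth-moment hypothesis to pass to the limit; spotting the potential $\varphi$ is the decisive algebraic step without which the right-hand side would not close.
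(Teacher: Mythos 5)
Your algebraic skeleton is sound and the two gradient‐structure observations (that $\nabla_y\log\hat m_y(x)=2\nabla^2\Phi(y)(\hat y(y)-\ell(x))$, and that $\nabla^2\Phi(y)(\hat y(y)-y)$ is the gradient of $-\tfrac12\log Z_y+\Phi-y\cdot\nabla\Phi$) are correct and do appear in the paper, though the latter potential is used there only in Step~6, for the concave case. However, there is a genuine gap at the heart of your argument: the ``dissipation identity''
\[
\tfrac12\iint|\nabla_x\log h|^2\,\rho = \lambda\iint(\ell-y)\cdot\nabla_y\log h\,\rho
\]
is claimed as an \emph{equality}, and the paper contains a remark (immediately after Proposition~\ref{prop:rho-hat-rho-entropy}) that explicitly warns this equality need \emph{not} hold, with a concrete counterexample: the equation $\nabla_y\cdot(y\rho)=0$ admits $\rho=\delta_0$, and the same formal integration by parts gives the absurdity $0=d$. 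The obstruction is not a tail or fourth‐moment issue that cutoffs can fix; it is the \emph{degeneracy} of \eqref{eq:fp-static} in the $y$ direction, which allows $\rho^2$ to be singular so that $\nabla_y\log\rho$ and $\nabla_y\log\rho^2$ need not exist as functions, and your piece‐by‐piece split of $\nabla_y\log h$ (in particular piece~1, where you integrate by parts in $y$ to produce $D$, and piece~2, where you feed $\log\rho^2$ into Lemma~\ref{lem:grad-invariant}) is not well posed. The paper addresses this by convolving $\rho$ with a Gaussian in the $y$ variable, which regularizes the equation by adding $\lambda\varepsilon\Delta_y$; after invoking the BKRS elliptic regularity theorem to justify testing against $\log\rho^\varepsilon$, the extra mollification produces a \emph{nonnegative} $y$‐Fisher information term that is discarded, so the formal equality is replaced by a one-sided inequality, and the limit $\varepsilon\to0$ is handled by lower semicontinuity of the partial Fisher information. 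This is the decisive analytical step your proposal is missing: a naive smoothing limit does not recover the equality, and one must track carefully which side the loss occurs on. Since the Proposition only asserts the inequality, your plan is in principle salvageable, but the ``regularity gymnastics'' you mention at the end is the entire content of the proof, and the way you phrase it suggests you would be aiming for an equality that is false in general.
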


For convenience, we set
\begin{equation}
\label{eq:def-I}
I \coloneqq \iint \bigl(\ell(x) - y\bigr)^\top \nabla^2\Phi(y)
\bigl(\ell(x) - y\bigr) \rho(\dd x\dd y),
\end{equation}
so the claim of the proposition reads
\[
H(\rho | \hat \rho) \leqslant \frac{C_\textnormal{LS}}{2}(D+2I) \lambda.
\]

\begin{proof}[Proof of Proposition~\ref{prop:rho-hat-rho-entropy}]
Let $g^\varepsilon$ be the Gaussian kernel in $D$ dimensions:
\[
g^\varepsilon(y) = (2 \pi \varepsilon)^{-D/2}
\exp \biggl( - \frac{|y|^2}{2\varepsilon} \biggr).
\]
We define the partial convolution
$\rho^\varepsilon = \rho \star_y g^\varepsilon$,
and according to \eqref{eq:fp-static},
it solves the non-degenerate elliptic equation
\begin{equation}
\label{eq:fp-static-reg}
\frac 12 \Delta_x \rho^\varepsilon
- \nabla_x \cdot \bigl( \beta^\varepsilon(y,x) \rho^\varepsilon \bigr)
- \ell(x) \cdot \nabla_y \rho^\varepsilon
+ \lambda \varepsilon \Delta_y \rho^\varepsilon
+ \lambda \nabla_y \cdot (y \rho^\varepsilon) = 0,
\end{equation}
where $\beta^\varepsilon$ is defined by
\[
\beta^\varepsilon = \frac{(\beta \rho) \star_y
g^\varepsilon}{\rho^\varepsilon}.
\]
Indeed, we have
\begin{align*}
(y\rho) \star_y g^\varepsilon
&=\int y' \rho(x', y') g^\varepsilon (y - y') \dd y' \\
&= \int (y' - y + y) \rho(x', y') g^\varepsilon (y - y') \dd y' \\
&= \varepsilon \int \rho(x', y') \nabla_y g^\varepsilon(y-y') \dd y'
+ y \rho^\varepsilon \\
&= \varepsilon \nabla_y \rho^\varepsilon + y \rho^\varepsilon.
\end{align*}
Thus, $\bigl(\nabla_y \cdot (y\rho)\bigr) \star_y g^\varepsilon
= \nabla_y \cdot \bigl((y\rho) \star_y g^\varepsilon\bigr)
= \varepsilon \Delta_y \rho^\varepsilon
+ \nabla_y \cdot (y \rho^\varepsilon)$.
By \cite[Lemma 3.1.1]{BKRSFPKEq}, we have
$\lVert \beta^\varepsilon \rVert_{L^2(\rho^\varepsilon)}
\leqslant \lVert \beta \rVert_{L^2(\rho)} < \infty$.
Then we can apply \cite[Theorem 3.1.2]{BKRSFPKEq} to \eqref{eq:fp-static-reg}
and obtain that $\rho^\varepsilon \in W^{1,1}(\mathbb R^{d+D})$ and satisfies
\begin{multline*}
\frac 12 \iint \frac{|\nabla_x \rho^\varepsilon|^2}{\rho^\varepsilon}
+ \lambda \varepsilon \iint \frac{|\nabla_y
\rho^\varepsilon|^2}{\rho^\varepsilon} \\
= \iint \nabla_x \rho^\varepsilon \cdot \beta^\varepsilon
+ \lambda \iint \nabla_y \rho^\varepsilon \cdot \ell
- \lambda \iint \nabla_y \rho^\varepsilon \cdot y.
\end{multline*}
As the function $\ell$ depends only on the $x$ argument,
for the second term we have
\[
\iint \nabla_y \rho^\varepsilon \cdot \ell
= \int \biggl( \int \nabla_y \rho^\varepsilon(x,y) dy \biggr) \ell(x) \dd x
= \int 0 \cdot \ell(x) \dd x = 0.
\]
where the first equality is due to Fubini and the second to the fact that
$\nabla \rho^\varepsilon \in L^1_x(L^1_y)$.
For the last term, similarly, since the function
$\bigl( (x,y)\mapsto \rho^\varepsilon(x,y) y \bigr) \in W^{1,1}$,
we have
$\iint \nabla_y \cdot (\rho^\varepsilon y) = 0$ and therefore,
\[
- \iint \nabla_y \rho^\varepsilon \cdot y
= \iint (\nabla_y \cdot y) \rho^\varepsilon
= D.
\]
That is to say, we have established
\begin{equation}
\label{eq:rho-fisher-bound}
\frac 12 \iint \frac{|\nabla_x \rho^\varepsilon|^2}{\rho^\varepsilon}
+ \lambda \varepsilon \iint \frac{|\nabla_y
\rho^\varepsilon|^2}{\rho^\varepsilon}
= \iint \nabla_x \rho^\varepsilon \cdot \beta^\varepsilon + \lambda D.
\end{equation}
This equality implies a uniform-in-$\varepsilon$ bound on
$\iint |\nabla_x\rho^\varepsilon|^2/\rho^\varepsilon$ by Cauchy--Schwarz.
Using a sequence of functions in $\mathcal C^\infty_\textnormal{c}$
approaching $V(y,x)$ in \eqref{eq:fp-static},
we also get
\begin{multline}
\label{eq:rho-energy-bound}
\iint \frac 12 \beta \cdot \nabla_x \rho^\varepsilon
- \iint \lambda \varepsilon \nabla_y V \cdot \nabla_y \rho^\varepsilon
- \iint \beta \cdot \beta^{\varepsilon} \rho^\varepsilon \\
+ \iint \lambda \nabla_y V(y,x) \cdot \bigl( \ell(x) - y \bigr)
\rho^\varepsilon(\dd x\dd y)
= 0.
\end{multline}
The second term of \eqref{eq:rho-energy-bound} is upper bounded by
\begin{align*}
\lambda \varepsilon \iint \lvert\nabla_y V \cdot \nabla_y
\rho^\varepsilon\rvert
&\leqslant \lambda \varepsilon \lVert \nabla_y V \rVert_{L^2(\rho^\varepsilon)}
\biggl( \iint \frac{|\nabla_y \rho^\varepsilon|^2}{\rho^\varepsilon}
\biggr)^{\!1/2} \\
&\leqslant \sqrt{\lambda \varepsilon}
\lVert \nabla_y V \rVert_{L^2(\rho^\varepsilon)}
\sqrt{\frac 12 \iint \frac{|\nabla_x \rho^\varepsilon|^2}{\rho^\varepsilon} +
\frac 12 \lVert \beta^\varepsilon \rVert_{L^2(\rho^\varepsilon)}^2
+ \lambda D},
\end{align*}
where the second inequality is due to \eqref{eq:rho-fisher-bound}.
Using the uniform-in-$\varepsilon$ bound of
$\iint \lvert \nabla_x \rho^\varepsilon \rvert^2/ \rho^\varepsilon$,
we obtain that the second term of \eqref{eq:rho-energy-bound}
converges to $0$ when $\varepsilon \to 0$.
The third term of \eqref{eq:rho-energy-bound} satisfies
\[
\iint \beta \cdot \beta^\varepsilon \rho^\varepsilon
= \iint \beta \cdot \bigl((\beta \rho) \star g^\varepsilon\bigr)
= \iint (\beta \star g^\varepsilon) \cdot \beta \rho
\to \iint |\beta|^2 \rho
\]
when $\varepsilon \to 0$.
Hence, by \eqref{eq:rho-fisher-bound} and \eqref{eq:rho-energy-bound},
we have
\begin{align*}
\MoveEqLeft \frac 12 \iint |\nabla_x \log \rho^\varepsilon
- 2\beta|^2\rho^\varepsilon(\dd x\dd y) \\
&\leqslant \lambda D
+ \iint \nabla_x \rho^\varepsilon \cdot \beta^\varepsilon
+ \iint \beta \cdot \nabla_x \rho^\varepsilon
- 2 \iint \nabla_x \rho^\varepsilon \cdot \beta \\
&\hphantom{\leqslant{}}\quad
+ 2 \lambda \iint \nabla_y V(y,x) \cdot \bigl(\ell(x) - y\bigr)
\rho^\varepsilon(\dd x\dd y)
+ o(1)\\
&= \lambda D + 2 \lambda \iint \nabla_y V(y,x) \cdot \bigl(\ell(x) - y\bigr)
\rho^\varepsilon(\dd x\dd y) + o(1),
\end{align*}
where the last equality is due to the fact that
\[
\biggl\vert\iint \nabla_x \rho^\varepsilon \cdot
(\beta^\varepsilon-\beta)\biggr\vert
\leqslant
\biggl(\iint\frac{|\nabla_x\rho^\varepsilon|^2}{\rho^\varepsilon}\biggr)^{\!1/2}
\biggl(\iint|\beta - \beta^\varepsilon|^2\rho^\varepsilon\biggr)^{\!1/2}
\]
and
\begin{align*}
\iint |\beta-\beta^\varepsilon|^2\rho^\varepsilon
&= \iint |\beta|^2\rho^\varepsilon
- 2 \iint \beta\cdot\beta^\varepsilon\rho^\varepsilon
+ \iint |\beta^\varepsilon|^2\rho^\varepsilon \\
&= \iint ( |\beta|^2 \star_y g^\varepsilon ) \rho
- 2 \iint (\beta \star_y g^\varepsilon) \cdot \beta \rho
+ \iint |\beta^\varepsilon|^2\rho^\varepsilon
\to 0,
\end{align*}
when $\varepsilon \to 0$ by previous arguments and \cite[Lemma 3.1.1]{BKRSFPKEq}.
We also have
\begin{multline*}
\iint \nabla_y V(y,x) \cdot \bigl(\ell(x) - y\bigr)
\rho^\varepsilon(\dd x\dd y)
= \iint \Bigl( \nabla_y V(y,x) \cdot \bigl(\ell(x) - y\bigr) \Bigr)
\star_y g^\varepsilon \rho(\dd x\dd y) \\
\to
\iint \nabla_y V(y,x) \cdot \bigl(\ell(x) - y\bigr)\rho(\dd x\dd y)
\end{multline*}
when $\varepsilon \to 0$.
Finally, by Lemma~\ref{lem:grad-invariant} for the function
$\varphi(y) = y \cdot \nabla \Phi(y) - \Phi(y)$,
we have
\begin{align*}
\MoveEqLeft \iint\nabla_yV(y,x)\cdot\bigl(\ell(x)-y\bigr)\rho(\dd x\dd y) \\
&=\iint \ell(x)^\top \nabla^2\Phi(y)
\bigl(\ell(x) - y\bigr) \rho(\dd x\dd y) \\
&=\iint \bigl(\ell(x) - y\bigr)^\top \nabla^2\Phi(y)
\bigl(\ell(x) - y\bigr) \rho(\dd x\dd y) = I,
\end{align*}
where the last equality is exactly the definition \eqref{eq:def-I} of $I$.
Thus, we have shown
\[
\frac 12 \iint
\biggl| \nabla_x \log \frac{\rho^\varepsilon(x,y)}{\hat m_y(x)}
\biggr|^2 \rho^\varepsilon(\dd x\dd y)
\leqslant (D+2I) \lambda + o(1).
\]
Note that by the lower semicontinuity of (partial) Fisher information,
\[
\liminf_{\varepsilon \to 0}\iint
\biggl| \nabla_x \log \frac{\rho^\varepsilon(x,y)}{\hat m_y(x)}
\biggr|^2 \rho^\varepsilon(\dd x\dd y)
\geqslant
\iint
\biggl| \nabla_x \log \frac{\rho(x,y)}{\hat m_y(x)}
\biggr|^2 \rho(\dd x\dd y).
\]
We refer readers to \cite[Proof of Lemma~A.1]{uklpoc} for details.
Taking the limit $\varepsilon \to 0$, we obtain
\[
\frac 12\iint
\biggl| \nabla_x \log \frac{\rho(x,y)}{\hat m_y(x)}
\biggr|^2 \rho(\dd x\dd y)
\leqslant (D+2I) \lambda.
\]
Since $\rho^2$ is supported on $\mathcal K$,
for $\rho^2$-almost all $y \in \mathbb R^D$,
we have the following by the uniform LSI for $(\hat m_y)_{y \in \mathcal K}$:
\[
H(\rho | \hat \rho)
= \int H\bigl( \rho^{1|2}(\cdot|y) \big| \hat m_y \bigr) \rho^2(\dd y)
\leqslant \frac{C_\textnormal{LS}}{4} \!
\iint \biggl| \nabla_x \log \frac{\rho(x,y)}{\hat m_y(x)} \biggr|^2
\rho(\dd x\dd y),
\]
which completes the proof.
\end{proof}

\begin{rem}
If we formally integrate the static Fokker--Planck equation
\eqref{eq:fp-static} with $\log (\rho/\hat\rho)$ and integrate by parts,
we obtain
\begin{equation}
\label{eq:rho-rel-fisher-equality}
\frac 12 \iint
\biggl| \nabla_x \log \frac{\rho(x,y)}{\hat m_y(x)}
\biggr|^2 \rho(\dd x\dd y)
= \lambda D
+ 2 \lambda \iint \ell(x)^\top\nabla^2\Phi(y)\bigl(\ell(x)-y\bigr)
\rho(\dd x\dd y).
\end{equation}
However, the equality must not hold in all circumstances.
Indeed, if one artificially increases the dimension of $\ell$ and $\Phi$
by defining the new functions
\begin{align*}
\tilde \Phi (y_0, y_1) &= \Phi(y_0), \\
\tilde \ell (x) &= \bigl(\ell(x), 0\bigr),
\end{align*}
the right hand side of \eqref{eq:rho-rel-fisher-equality} increases
while the left hand side stays unchanged.
This phenomenon is caused by the fact that the equation \eqref{eq:fp-static}
is degenerate elliptic and lacks Laplacian in the $y$ directions.
To illustrate this effect, consider the first-order equation
\[
\nabla_y \cdot (y \rho) = 0
\]
in $d$ dimensions.
This equation has a probability solution $\rho = \delta_0$, the Dirac mass at
the origin.
Formally integrating the equation with $\log \rho$ and integrating by parts,
we have
\[
0 = \int \log \rho \nabla_y \cdot (y \rho)
= - \int \frac{\nabla_y \rho}{\rho} \cdot y \rho
= - \int \nabla_y \rho \cdot y
= \int \rho \nabla_y \cdot y
= \int \rho d = d,
\]
which is absurd.
\end{rem}

To complete the entropy estimate,
we provide in the following upper bounds for the integral $I$.

\begin{prop}
\label{prop:upper-bound-I}
Under the setting of the theorem,
the integral $I$ in \eqref{eq:def-I} satisfies the upper bound:
\begin{equation}
\label{eq:upper-bound-I-1}
I \leqslant 12 M_2
\Bigl( d C_\textnormal{LS} + 2 W_2^2 \bigl( \Law(X), m_* \bigr) \Bigr),
\end{equation}
where $X$ is the first component of the random variable
$(X,m)$ following the stationary distribution $P = P^\lambda$,
and $m_*$ is the unique invariant measure of the McKean--Vlasov process
\eqref{eq:mkv}.
If additionally $\nabla^2\Phi$ is convex and the quantity
\[
M_1 \coloneqq \sup_{x \in \mathbb R^d, y \in \mathbb R^D}
\ell(x)^\top \nabla^2\Phi(y) \ell(x)
\]
is finite, then we have the alternative upper bound:
\begin{equation}
\label{eq:upper-bound-I-2}
I \leqslant M_1.
\end{equation}
\end{prop}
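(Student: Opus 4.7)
The plan rests on one algebraic identity, which I exploit in two different ways. Applying Lemma~\ref{lem:grad-invariant} with $\varphi(y) = y^\top \nabla\Phi(y) - \Phi(y)$, whose gradient is $\nabla\varphi(y) = \nabla^2\Phi(y) y$, yields
\[
\iint y^\top \nabla^2\Phi(y)\bigl(\ell(x) - y\bigr)\,\rho(\dd x\,\dd y) = 0.
\]
Expanding the definition of $I$ and using this identity together with the symmetry of $\nabla^2\Phi$ reorganises the integral as
\[
I = \iint \ell(x)^\top \nabla^2\Phi(y)\ell(x)\,\rho(\dd x\,\dd y)
- \iint y^\top \nabla^2\Phi(y) y\,\rho(\dd x\,\dd y).
\]
For \eqref{eq:upper-bound-I-2}, convexity of $\Phi$ makes $\nabla^2\Phi$ positive semidefinite, so the second integral is non-negative while the first is pointwise at most $M_1$ by definition. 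Hence $I \leqslant M_1$ follows immediately.

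For \eqref{eq:upper-bound-I-1}, I would abandon the identity and argue directly from $Y = \langle\ell,m\rangle$. Writing $\ell(X) - Y = \int \bigl(\ell(X) - \ell(x')\bigr)m(\dd x')$ and applying Jensen's inequality to the convex quadratic form $v \mapsto v^\top \nabla^2\Phi(Y) v$ gives
\[
\bigl(\ell(X) - Y\bigr)^\top \nabla^2\Phi(Y)\bigl(\ell(X) - Y\bigr)
\leqslant \int \bigl(\ell(X) - \ell(x')\bigr)^\top \nabla^2\Phi(Y)\bigl(\ell(X) - \ell(x')\bigr)\,m(\dd x').
\]
The definition of $M_2$, combined with the fundamental theorem of calculus along the segment from $x'$ to $X$, bounds the integrand on the right by $M_2 \lvert X - x'\rvert^2$. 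Introducing on an enlarged probability space an auxiliary variable $X'$ with $\Law(X' \mid X, m) = m$ and taking expectations against $P$ then gives $I \leqslant M_2\,\Expect\lvert X - X'\rvert^2$.

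The last step is to relate $\Expect\lvert X - X'\rvert^2$ to $W_2^2\bigl(\Law(X), m_*\bigr)$ and $d C_\textnormal{LS}$, and this is where I expect the main conceptual point. The crucial observation is that $\Law(X') = \Expect[m] = \rho^1 = \Law(X)$: taking expectations in the second line of~\eqref{eq:si} at stationarity produces $\Expect[\lambda(\delta_X(A) - m(A))] = 0$ for every Borel $A$, whence $\Expect[m] = \rho^1$. Since $X$ and $X'$ then share the marginal $\rho^1$, Cauchy--Schwarz on their covariance yields $\Expect\lvert X - X'\rvert^2 \leqslant 4\Var(\rho^1)$, and the $L^2$ triangle inequality gives $\sqrt{\Var(\rho^1)} \leqslant W_2(\rho^1,m_*) + \sqrt{\Var(m_*)}$. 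As the McKean--Vlasov invariant must satisfy the fixed-point equation $m_* = \hat m_{y_*}$ with $y_* = \langle\ell,m_*\rangle$, Assumption~\ref{assu:m-hat-uniform-lsi} furnishes a $C_\textnormal{LS}$-LSI and thus a Poincar\'e inequality for $m_*$, which, applied to coordinate functions, gives $\Var(m_*) \leqslant d C_\textnormal{LS}/2$. Chaining these estimates yields $I \leqslant 4 M_2\bigl(d C_\textnormal{LS} + 2 W_2^2(\Law(X),m_*)\bigr)$, which is in fact sharper than the claimed \eqref{eq:upper-bound-I-1}.
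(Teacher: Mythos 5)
Your proof is correct and closely parallels the paper's. For \eqref{eq:upper-bound-I-2} the arguments are identical: apply Lemma~\ref{lem:grad-invariant} with $\varphi(y)=y^\top\nabla\Phi(y)-\Phi(y)$ to drop the cross term and use positive-semidefiniteness of $\nabla^2\Phi$. For \eqref{eq:upper-bound-I-1}, both arguments pass through the same intermediate bound $I\leqslant M_2\,\Expect\bigl[\int|X-x'|^2\,m(\dd x')\bigr]$, but you reach it via Jensen for the PSD quadratic form $v\mapsto v^\top\nabla^2\Phi(Y)v$ together with the fundamental theorem of calculus along the segment, whereas the paper rewrites $I$ through $\delta^2F/\delta m^2$ and invokes the Wasserstein-duality Lemma~\ref{lem:kantorovich-2-var} in the appendix; your route is more elementary and tailored to the cylindrical structure, the paper's lemma is more general. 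You then observe $\Expect[m]=\rho^1$ at stationarity and use Cauchy--Schwarz on the covariance; this is equivalent in effect to the paper's application of Proposition~\ref{prop:si-stationary-pde-weak-sol} with soft cut-offs $\phi_n$, and indeed that approximation is precisely what you would need to make the step rigorous, since indicator test functions are not in $\mathcal C^{1,2}_\textnormal{b}$. Finally, your $L^2$-triangle estimate $\sqrt{\Var\rho^1}\leqslant W_2(\rho^1,m_*)+\sqrt{\Var m_*}$ together with the correct Poincar\'e constant $C_\textnormal{LS}/2$ implied by the LSI normalization in Assumption~\ref{assu:m-hat-uniform-lsi} yields $I\leqslant 4M_2\bigl(dC_\textnormal{LS}+2W_2^2(\Law(X),m_*)\bigr)$, which improves on the paper's prefactor $12M_2$; the paper loses a factor through a three-term squared triangle inequality and the cruder bound $\Var X_*\leqslant C_\textnormal{LS}d$. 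In short: same decomposition, different elementary lemmas, and a sharper constant on your side.
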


\begin{proof}[Proof of Proposition~\ref{prop:upper-bound-I}]
First let us treat the simpler case where $M_1 < \infty$.
By applying Lemma~\ref{lem:grad-invariant} to the function
$\varphi(y) = y \cdot \nabla \Phi(y) - \Phi(y)$, we get
\begin{align*}
I &= \iint \bigl( \ell(x)^\top \nabla^2\Phi(y) \ell(x)
- y^\top \nabla^2\Phi(y)y \bigr) \rho(\dd x\dd y) \\
&\leqslant \iint \ell(x)^\top \nabla^2\Phi(y) \ell(x) \rho(\dd x\dd y)
\leqslant M_1.
\end{align*}
So the second claim of the proposition is proved.

Without the assumption $M_1 < \infty$, we note that,
for the second-order functional derivative
\[
\frac{\delta^2 F}{\delta m^2}(m,x',x'')
= \ell(x'')^\top \nabla^2\Phi(\langle \ell, m\rangle) \ell(x),
\]
we have
\[
I = \Expect \biggl[
\iint \frac{\delta^2 F}{\delta m^2}(m,x',x'')
(\delta_X - m)(\dd x') (\delta_X - m)(\dd x'') \biggr],
\]
where $(X,m)$ is a random variable following the stationary distribution $P$,
and $(X,\langle \ell, m\rangle)$ has the distribution $\rho$.
We observe
\begin{align*}
\bigl| D_m^2 F(m,x',x'') \bigr|
&= \biggl| \nabla_{x'} \nabla_{x''} \frac{\delta^2 F}{\delta m^2}
(m,x',x'') \biggr| \\
&= \bigl| \nabla\ell(x'')^\top \nabla^2\Phi(\langle\ell, m\rangle)
\nabla\ell(x') \bigr| \leqslant M_2.
\end{align*}
Then, by applying Lemma~\ref{lem:kantorovich-2-var} in
Appendix~\ref{app:kantorovich-2-var}
to a sequence of $\mathcal C^2$ functions approaching
$(x', x'') \mapsto \frac{\delta^2 F}{\delta m^2}(m,x',x'')$, we get
\begin{align*}
I &\leqslant M_2 \Expect \bigl[ W_2^2 (\delta_X, m) \bigr] \\
&= M_2 \Expect \biggl[ \int \lvert X - x' \rvert^2 m(\dd x') \biggr] \\
&\leqslant 2 M_2
\Expect \biggl[ \lvert X - \Expect[X] \rvert^2
+ \int \lvert x' - \Expect[X] \rvert^2 m(\dd x') \biggr].
\end{align*}
Let $\phi : \mathcal P_2(\mathbb R^d) \to \mathbb R$
be the functional defined by
\[
\phi(m) = \int \lvert x' - \Expect[X] \rvert^2 m(\dd x').
\]
We consider the sequence of $\mathcal C^1_\textnormal{b}$ ``soft cut-offs''
that approach $\phi$:
\[
\phi_n(m) = \sum_{i=1}^d \int n^2 \tanh^2
\biggl( \frac{x'^i - \Expect[X^i]}{n} \biggr) m(\dd x'),
\qquad\text{for}~n \in \mathbb N.
\]
Then, by applying the sequence $\phi_n$ to
Proposition~\ref{prop:si-stationary-pde-weak-sol} of stationary measure
and taking the limit $n \to \infty$, we get
\begin{multline*}
0 = \lambda \Expect\biggl[
\int \frac{\delta\phi}{\delta m}(m,x') (\delta_X - m)(\dd x') \biggr] \\
= \lambda \Expect\bigl[ \lvert X - \Expect[X] \lvert^2 \bigr]
- \lambda \Expect\biggr[\!\int \lvert x'
- \Expect[X] \rvert^2 m(\dd x') \biggr].
\end{multline*}
Thus, we have derived
\[
I \leqslant 4M_2 \Expect\bigl[ \lvert X - \Expect[X] \rvert^2 \bigr]
\eqqcolon 4M_2 \Var X,
\]
where $\Var X$ denotes the sum of the variances
of each component of the random vector $X$.
It remains only to find an upper bound for $\Var X$.
Note that, using the definition of Wasserstein distance
and the triangle inequality, and letting
$X_*$ be distributed as $m_*$, we get
\begin{align*}
\Var X &= W_2^2 \bigl( \Law(X), \delta_{\Expect[X]} \bigr) \\
&\leqslant 3 \Bigl(
W_2^2 \bigl( \Law(X), m_* \bigr)
+ W_2^2 \bigl( m_*, \delta_{\Expect[X_*]} \bigr)
+ W_2^2 \bigl(\delta_{\Expect[X_*]}, \delta_{\Expect[X]}\bigr) \Bigr) \\
&\leqslant
3 \Bigl( \Var X_* + 2 W_2^2\bigl(\Law(X), m_*\bigr) \Bigr),
\end{align*}
while the variance of $X_*$ is upper bounded by the Poincaré inequality:
\begin{align*}
\Var X_* &= \sum_{i=1}^d
\Biggl( \int \lvert x^i\rvert^2 m_*(\dd x)
- \biggl( \int x^i m_* (\dd x) \biggr)^{\!2} \Biggr) \\
&\leqslant C_\textnormal{LS}
\sum_{i=1}^d \int \lvert \nabla x^i\rvert^2 m_*(\dd x)
= C_\textnormal{LS} d.
\end{align*}
We then conclude by combining the three inequalities above.
\end{proof}

\subsection{Construction of another measure}

In this subsection, we construct another measure
in order to exploit the convexity of $\Phi$,
used for the proof of the first and second claims of the theorem.
Readers only interested in the last claim of the theorem
can now directly go to the next subsection.

Let $\mu = \mu^\lambda$ be the probability measure
on $\mathbb R^d \times \mathbb R^D$ characterized by the following formula:
\begin{multline}
\label{eq:def-mu}
\langle f, \mu \rangle
= \int f(x,y) \mu (\dd x \dd y) \\
= \Expect \biggl[ \int f(x, \langle \ell, m\rangle) m(\dd x) \biggr]
= \Expect \biggl[ \int f(x, Y) m(\dd x) \biggr]
\end{multline}
for all bounded and measurable
$f : \mathbb R^d \times \mathbb R^D \to \mathbb R$.
By taking $f$ depending only on the $y$ variable, we first realize that
the second marginals of $\rho$ and $\mu$ agree, that is,
\[\rho^2 = \mu^2.\]
In addition, we show the following important properties of $\mu$.

\begin{prop}
\label{prop:mu-properties}
Under the setting of the theorem,
for every $\mathcal C^2$ differentiable $\Psi : \mathbb R^{D} \to \mathbb R$
with bounded Hessian, we have
\[
\iint \nabla \Psi(y) \cdot \ell(x) (\mu - \rho) (\dd x \dd y) = 0.
\]
In particular, the respective first marginals $\mu^1$, $\rho^1$
of $\mu$, $\rho$ satisfy
\[
\int \ell(x) (\mu^1 - \rho^1) (\dd x) = 0.
\]
Moreover, denoting by $\mu^{1|2}(\cdot | \cdot)
: \mathcal B(\mathbb R^d) \times \mathbb R^D \to \mathbb R$
the conditional measure of $\mu$ given its second variable,
we have for $\rho^2$-almost all $y \in \mathbb R^{D}$,
\[
\bigl\langle \ell, \mu^{1|2} (\cdot | y) \bigr\rangle = y.
\]
\end{prop}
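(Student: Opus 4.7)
The plan is that all three assertions of Proposition~\ref{prop:mu-properties} reduce, via the definition \eqref{eq:def-mu} of $\mu$, to Lemma~\ref{lem:grad-invariant} (for the first two) and to a routine disintegration argument (for the last). No new analytic input is required.

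For the first identity, I would unfold \eqref{eq:def-mu} applied to the function $f(x,y) = \nabla\Psi(y)\cdot\ell(x)$, which turns the $\mu$-integral into
\[
\Expect\biggl[\nabla\Psi(Y) \cdot \int \ell(x)\,m(\dd x)\biggr] = \Expect[\nabla\Psi(Y)\cdot Y],
\]
because $Y = \langle \ell, m\rangle$ by definition. The $\rho$-integral, meanwhile, equals $\Expect[\nabla\Psi(Y)\cdot\ell(X)]$ since $\rho = \Law(X,Y)$. Subtracting the two expressions reduces the desired identity to
\[
\iint \nabla\Psi(y)\cdot\bigl(\ell(x)-y\bigr)\rho(\dd x\dd y) = 0,
\]
which is exactly the content of Lemma~\ref{lem:grad-invariant} applied with $\varphi=\Psi$. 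The linear-growth hypothesis on $\nabla\varphi$ required by the lemma is satisfied because the Hessian of $\Psi$ is bounded.

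The second identity then follows as an immediate corollary: specialising to $\Psi(y)=y^\nu$ for $\nu=1,\dots,D$ (whose Hessian is zero, hence bounded) yields $\int \ell^\nu(x)(\mu^1-\rho^1)(\dd x) = 0$ componentwise. For the third identity, I would argue by duality against an arbitrary bounded measurable test function $g:\mathbb R^D\to\mathbb R^D$. Disintegrating $\mu$ along its second marginal gives
\[
\iint g(y)\cdot\ell(x)\,\mu(\dd x\dd y) = \int g(y)\cdot\bigl\langle \ell,\mu^{1|2}(\cdot|y)\bigr\rangle\,\mu^2(\dd y),
\]
while invoking \eqref{eq:def-mu} once more evaluates the same integral to $\Expect[g(Y)\cdot Y] = \int g(y)\cdot y\,\mu^2(\dd y)$, where I used $\mu^2 = \rho^2 = \Law(Y)$ (which itself follows by taking $f=f(y)$ in \eqref{eq:def-mu}). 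Comparing the two expressions for arbitrary $g$ and applying a standard monotone-class argument yields $\langle \ell,\mu^{1|2}(\cdot|y)\rangle = y$ for $\mu^2$-almost every $y$, equivalently for $\rho^2$-almost every $y$.

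The only obstacle, which is purely technical, is that \eqref{eq:def-mu} is stated for bounded $f$, whereas both $\nabla\Psi(y)\cdot\ell(x)$ and $g(y)\cdot\ell(x)$ may have polynomial growth (since $\nabla\ell$ grows at most linearly, $\ell$ itself has at most quadratic growth). This is handled by a truncation-and-dominated-convergence argument: the finite fourth-moment hypothesis on $P$ together with the linear growth of $\nabla\ell$ ensure $\Expect[\int|\ell|^2\dd m]<\infty$ and $\Expect[|Y|^2]<\infty$, which suffices to extend \eqref{eq:def-mu} to the functions encountered above and to legitimise all the manipulations.
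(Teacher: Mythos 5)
Your proof is correct, and for the first identity you take a genuinely shorter route than the paper. The paper re-invokes the weak PDE characterization (Proposition~\ref{prop:si-stationary-pde-weak-sol}) on the functional $\phi(m)=\int f(x',\langle\ell,m\rangle)m(\dd x')$, obtaining an identity of the form $\langle f,\rho-\mu\rangle + \textnormal{(correction)} = 0$, and then shows that for $f(x,y)=\nabla\Psi(y)\cdot\ell(x)$ the correction term vanishes by Lemma~\ref{lem:grad-invariant} applied to the Legendre-type transform $\varphi(y)=\nabla\Psi(y)\cdot y-\Psi(y)$, whose gradient is $\nabla^2\Psi(y)y$. You instead observe that, because of the special bilinear structure of $f$, the $\mu$-integral collapses immediately to $\Expect[\nabla\Psi(Y)\cdot Y]$ by the very definition \eqref{eq:def-mu} (using $Y=\langle\ell,m\rangle$), while the $\rho$-integral is $\Expect[\nabla\Psi(Y)\cdot\ell(X)]$, so the difference is precisely $\Expect[\nabla\Psi(Y)\cdot(\ell(X)-Y)]$ and Lemma~\ref{lem:grad-invariant} with $\varphi=\Psi$ closes the argument in one step, with no need to touch the PDE again. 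Both approaches ultimately rest on the same Fokker--Planck machinery (Lemma~\ref{lem:grad-invariant} is itself derived from \eqref{eq:fp-static}); yours is more economical and, in my view, cleaner. Your treatments of the second claim ($\Psi(y)=y^\nu$) and of the third claim (duality against $g(y)\cdot\ell(x)$ plus disintegration of $\mu$) agree with the paper's. Your final paragraph on extending \eqref{eq:def-mu} from bounded to polynomially growing test functions by truncation, justified by the fourth-moment hypothesis on $P$ and the linear growth of $\nabla\ell$, correctly identifies and resolves the only technical point; the paper handles it in the same spirit via ``by approximation.''
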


\begin{proof}[Proof of Proposition~\ref{prop:mu-properties}]
Consider the functional
\[
\phi(m) = \int f(x', \langle k, m\rangle) m(\dd x'),
\]
where $f \in \mathcal C^1_\textnormal{b}
(\mathbb R^d \times \mathbb R^D; \mathbb R)$
and $k \in \mathcal C_\textnormal{b}(\mathbb R^d; \mathbb R^D)$.
Then its linear functional derivative reads
\[
\frac{\delta \phi}{\delta m}(m,x')
= f(x', \langle k, m\rangle)
+ \int \nabla_y f(x'', \langle k, m\rangle) \cdot k(x') m(\dd x''),
\]
so $\phi$ belongs to the $\mathcal C^1_\textnormal{b}$ class.
Then, applying Proposition~\ref{prop:si-stationary-pde-weak-sol}
to the functional $\phi$, we get
\begin{align*}
0 &= \Expect \biggl[ \int \frac{\delta\phi}{\delta m}
(m,x') (\delta_X - m) (\dd x') \biggr] \\
&= \Expect [ f(X, m) ]
- \Expect\biggl[ \int f(x', \langle k, m\rangle) m(\dd x') \biggr] \\
&\hphantom{={}}\quad
+ \Expect \biggl[ \int \nabla_y f(x', \langle k, m\rangle)
\cdot \bigl(k (X) - \langle k,m\rangle\bigr) m(\dd x') \biggr]
\end{align*}
By approximation, the equality above holds for
$k = \ell$ and for all $\mathcal C^1$-continuous
$f$ with the following growth bounds:
\begin{align*}
\lvert f(x,y)\rvert &\leqslant M (1 + \lvert x\rvert^4 + \lvert y\rvert^2), \\
\lvert \nabla_y f(x,y) \rvert
&\leqslant M (1 + \lvert x\rvert^2 + \lvert y\rvert),
\end{align*}
that is to say, we have
\[
\langle f, \rho - \mu\rangle + \Expect \biggl[ \int \nabla_y f(x', Y)
\bigl( \ell(X) - Y \bigr) m(\dd x') \biggr] = 0,
\]
where, as before, $Y = \langle\ell, m\rangle$.
Specializing to $f(x,y) = \nabla \Psi(y) \cdot \ell(x)$, we obtain
\begin{align*}
\langle f, \mu - \rho\rangle
&= \Expect \biggl[ \int \ell(x)^\top
\nabla^2\Psi(Y) \bigl(\ell(X) - Y\bigr)m(\dd x)\biggr] \\
&= \Expect \bigl[ Y^\top
\nabla^2\Psi(Y) \bigl(\ell(X) - Y\bigr)\bigr]=0,
\end{align*}
where the last equality is due to Lemma \ref{lem:grad-invariant},
as for $\varphi(y) \coloneqq \nabla \Psi(y) \cdot y - \Psi(y)$,
we have $\nabla \varphi(y) = \nabla^2\Psi(y)y$.
So the first claim is proved.
Taking $\Psi(y) = y^\nu$, for $\nu = 1,\ldots,D$, yields
the second claim.

For the last claim,
we take $f(x,y) = \ell(x)  g(y)$
for $g : \mathbb R^{D} \to \mathbb R$ of linear growth
in the defining equation \eqref{eq:def-mu} of $\mu$.
Then, we get
\begin{multline*}
\int g(y) \biggl(\int \ell(x) \mu^{1|2} (\dd x|y)\biggr) \mu^2(\dd y)
= \iint f(x,y) \mu(\dd x\dd y)
= \Expect [ Y g(Y) ] \\
= \int g(y) y \mu^2(\dd y).
\end{multline*}
The desired property follows from the arbitrariness of $g$.
\end{proof}

\subsection{Proving the theorem}

Having established the entropy estimate
and constructed the auxiliary measure,
we finally move to the central part of the proof,
which consists of six steps.
The aim of the first five steps is to show
the first and the second claims of the theorem,
and in the last step we prove the last claim.

\proofstep{Step 1: Control of the symmetrized entropy}
We aim at controlling the symmetrized entropy
\[
\int \bigl( H(\hat m_y | m_*) + H(m_* | \hat m_y) \bigr) \rho^2(\dd y)
\]
in this step. First observe
\begin{multline}
\label{eq:sys_entropy}
\int \bigl( H(\hat m_y | m_*) + H(m_* | \hat m_y) \bigr) \rho^2(\dd y) \\
= 2 \iint \bigl(V(y_*,x) - V(y,x)\bigr)
\bigl( \hat m_y(\dd x) - m_*(\dd x) \bigr) \rho^2(\dd y).
\end{multline}
In order to control the right hand side above,
we turn to the probability measure $\mu$ introduced in \eqref{eq:def-mu}.
Recall that $m_*$ is the invariant measure of the McKean--Vlasov
\eqref{eq:mkv},
and $y_* \coloneqq \langle\ell, m_*\rangle$.
The convexity of $\Phi$ implies the convexity of $F$ as a functional,
and as a result, for $\rho^2$-almost all $y \in \mathbb R^{D}$, we have
the tangent inequalities
\begin{multline}
\label{eq:tangent}
\int V\bigl(\bigl\langle\ell, \mu^{1|2}(\cdot|y)\bigr\rangle, x\bigr)
\bigl(\mu^{1|2}(\dd x | y) - m_*(\dd x)\bigr) \\
\geqslant F\bigl(\mu^{1|2}(\cdot|y)\bigl) - F(m_*) \\
\geqslant \int V(y_*, x) \bigl(\mu^{1|2}(\dd x | y) - m_*(\dd x)\bigr).
\end{multline}
Thanks to the last claim of Proposition~\ref{prop:mu-properties},
the leftmost term satisfies, for $\rho^2$-almost all $y \in \mathbb R^D$,
\[
\int V\bigl(\bigl\langle\ell, \mu^{1|2}(\cdot|y)\bigr\rangle, x\bigr)
\bigl(\mu^{1|2}(\dd x | y) - m_*(\dd x)\bigr)
= \int V(y, x) \bigl(\mu^{1|2}(\dd x | y) - m_*(\dd x)\bigr).
\]
Hence, integrating the tangent inequalities \eqref{eq:tangent} above
by $\rho^2$, we get
\begin{multline}
\label{eq:convergence-energy-ineq-pre}
\iint V(y, x) \bigl(\mu^{1|2}(\dd x | y) - m_*(\dd x)\bigr) \rho^2(\dd y) \\
\geqslant \iint V(y_*, x) \bigl(\mu^{1|2}(\dd x | y) - m_*(\dd x)\bigr)
\rho^2(\dd y).
\end{multline}
Using $\mu^2 = \rho^2$ and applying Proposition~\ref{prop:mu-properties} to
$V(y,x) = \nabla \Phi(y) \cdot \ell(x)$, we know that
the left hand side of \eqref{eq:convergence-energy-ineq-pre} satisfies
\begin{align*}
\MoveEqLeft \iint V(y, x) \bigl(\mu^{1|2}(\dd x | y)
- m_*(\dd x)\bigr) \rho^2(\dd y) \\
&=\iint V(y,x) \mu(\dd x\dd y) - \iint V(y,x) m_*(\dd x) \rho^2(\dd y) \\
&=\iint V(y,x) \rho(\dd x\dd y) - \iint V(y,x) m_*(\dd x) \rho^2(\dd y) \\
&= \iint V(y, x) \bigl(\rho^{1|2}(\dd x | y) - m_*(\dd x)\bigr) \rho^2(\dd y).
\end{align*}
The right hand side of \eqref{eq:convergence-energy-ineq-pre} satisfies
\begin{align*}
\MoveEqLeft \iint V(y_*, x) \bigl(\mu^{1|2}(\dd x | y) - m_*(\dd x)\bigr)
\rho^2(\dd y) \\
&= \iint V(y_*, x) \mu(\dd x\dd y) - \iint V(y_*, x) m_*(\dd x) \\
&= \nabla \Phi(y_*) \cdot \int \ell(x)
\bigl(\mu^{1}(\dd x) - m_*(\dd x)\bigr) \\
&= \nabla \Phi(y_*) \cdot \int \ell(x)
\bigl(\rho^{1}(\dd x) - m_*(\dd x)\bigr) \\
&= \iint V(y_*, x) \bigl(\rho^{1|2}(\dd x | y) - m_*(\dd x)\bigr) \rho^2(\dd y),
\end{align*}
where the third equality is due to the last claim
of Proposition~\ref{prop:mu-properties}.
Thus, we have derived
\begin{equation}
\label{eq:convergence-energy-ineq}
\iint \bigl(V(y, x) - V(y_*,x)\bigr)
\bigl(\rho^{1|2}(\dd x | y) - m_*(\dd x)\bigr) \rho^2(\dd y)
\geqslant 0.
\end{equation}
Therefore, to dominate the right hand side of \eqref{eq:sys_entropy},
it remains to control the following term.
Using the Kantorovich duality, we get
\begin{align*}
\MoveEqLeft
\biggl|\iint \bigl( V(y,x) - V(y_*,x) \bigr)
\bigl( \rho^{1|2}(\dd x|y) - \hat m_y(\dd x) \bigr)
\rho^2(\dd y)\biggr| \\
&= \biggl|
\int \bigl(\nabla\Phi(y) - \nabla\Phi(y_*)\bigr)
\cdot \bigl\langle \ell, \rho^{1|2}(\cdot|y) - \hat m_y \bigr\rangle
\rho^2(\dd y) \biggr| \\
&= \biggl|
\iint_0^1
(y - y_*)^\top \nabla^2\Phi\bigl((1-t)y + ty_*\bigr)
\bigl\langle \ell, \rho^{1|2}(\cdot|y) - \hat m_y \bigr\rangle
\dd t\,\rho^2(\dd y)
\biggr| \\
&\leqslant
\biggl( \iint_0^1 (y - y_*)^\top
\nabla^2\Phi\bigl((1-t)y + ty_*\bigr)(y-y_*) \dd t\,\rho^2(\dd y)
\biggr)^{\!1/2} \\
&\hphantom{\leqslant{}}\quad\times\biggl( \iint_0^1
\Bigl|\Bigl\langle \nabla^2 \Phi \bigl((1-t)y+ty_*\bigr)^{1/2}\ell,
\rho^{1|2}(\cdot|y) - \hat m_y\Bigr\rangle\Bigr|^2
\dd t\,\rho^2(\dd y) \biggr)^{\!1/2} \\
&\leqslant
\biggl( \iint_0^1 (y - y_*)^\top
\nabla^2\Phi\bigl((1-t)y + ty_*\bigr)(y-y_*) \dd t\,\rho^2(\dd y)
\biggr)^{\!1/2} \\
&\hphantom{\leqslant{}}\quad\times
\sqrt{M_2} \biggl( \int W_1^2\bigl( \rho^{1|2}(\cdot|y), \hat m_y \bigr)
\rho^2(\dd y) \biggr)^{\!1/2} \\
&\eqqcolon \sqrt{M_2 v(\rho^2)}
\biggl( \int W_1^2\bigl( \rho^{1|2}(\cdot|y), \hat m_y \bigr)
\rho^2(\dd y) \biggr)^{\!1/2},
\end{align*}
where, by Assumption~\ref{assu:M2},
$\bigl|\nabla^2\Phi(y')^{1/2}\nabla\ell(x)\bigr| \leqslant \sqrt{M_2}$
for all $x \in \mathbb R^d$, $y' \in \mathbb R^D$,
and $v(\rho^2)$ is the quantity to be controlled
in the first claim of the theorem.
The uniform LSI for $(\hat m_y)_{y \in \mathcal K}$
implies a uniform Talagrand's transport inequality, from which we obtain
\begin{align*}
\int W_1^2\bigl(\rho^{1|2}(\cdot|y), \hat m_y\bigr)\rho^2(\dd y)
&\leqslant \int W_2^2\bigl(\rho^{1|2}(\cdot|y), \hat m_y\bigr)\rho^2(\dd y) \\
&\leqslant C_\textnormal{LS}
\int H\bigl(\rho^{1|2}(\cdot|y)\big|\hat m_y\bigr)\rho^2(\dd y)
= C_\textnormal{LS} H(\rho | \hat \rho),
\end{align*}
as $\rho^2$ is supported on $\mathcal K$.
Combining the two inequalities above with \eqref{eq:sys_entropy} and
\eqref{eq:convergence-energy-ineq}, we obtain
\begin{equation}
\label{eq:bound-sym-entropy}
\int \bigl( H(\hat m_y | m_*) + H(m_* | \hat m_y) \bigr) \rho^2(\dd y)
\leqslant
2 \sqrt{M_2C_\textnormal{LS} v(\rho^2) H(\rho | \hat \rho)}.
\end{equation}

\proofstep{Step 2: Control of the conditional Wasserstein distance}
Now, using again the Talagrand's transport inequality
for $\hat m_y$ and $m_*$ (note that $m_* = \hat m_{y_*}$
for $y_* = \langle \ell, m_*\rangle$), we get for $\rho^2$-almost all
$y \in \mathbb R^D$,
\[
W_2^2(\hat m_y, m_*) \leqslant \frac{C_\textnormal{LS}}{2}
\bigl( H(\hat m_y | m_*) + H(m_* | \hat m_y) \bigr),
\]
while the triangle inequality and the transport inequality imply
\begin{align*}
W_2^2 \bigl( \rho^{1|2} (\cdot|y), m_* \bigr)
&\leqslant 2 \Bigl(
W_2^2 \bigl( \rho^{1|2} (\cdot|y), \hat m_y \bigr)
+ W_2^2 (\hat m_y, m_*) \Bigr) \\
&\leqslant 2 C_\textnormal{LS}
H \bigl( \rho^{1|2}(\cdot|y), \hat m_y \bigr)
+ 2 W_2^2(\hat m_y, m_*).
\end{align*}
So, combining the three inequalities above
and integrating with $\rho^2$, we find
\begin{equation}
\label{eq:bound-w2-pre}
\int W_2^2 \bigl( \rho^{1|2} (\cdot|y), m_* \bigr) \rho^2(dy)
\leqslant 2 C_\textnormal{LS} H(\rho | \hat \rho)
+ 2 \sqrt{M_2C_\textnormal{LS}^3 v(\rho^2) H(\rho | \hat \rho)}.
\end{equation}

\proofstep{Step 3: Control of $v(\rho^2)$ by $H(\rho|\hat\rho)$}
Applying Proposition~\ref{prop:mu-properties}
to the function $\Psi(y) = \Phi(y) - \nabla \Phi(y_*) \cdot y$,
where, as we recall, $y_* = \langle \ell, m_*\rangle$, we get
\begin{align*}
0 &= \iint \nabla \Psi(y) \cdot \ell(x) (\mu - \rho) (\dd x\dd y) \\
&= \iint \bigl(\nabla\Phi(y) - \nabla\Phi(y_*)\bigr)
\cdot \ell(x)(\mu-\rho)(\dd x\dd y) \\
&= \iint_0^1 (y-y_*)^\top\nabla^2\Phi\bigl((1-t)y+ty_*\bigr)
\bigl\langle\ell, \mu^{1|2}(\cdot|y) - \rho^{1|2}(\cdot|y)\bigr\rangle
\dd t\,\rho^2(\dd y) \\
&= \iint_0^1 (y-y_*)^\top\nabla^2\Phi\bigl((1-t)y+ty_*\bigr)
\bigl( y - \bigl\langle\ell,\rho^{1|2}(\cdot|y)\bigr\rangle
\bigr)\dd t\,\rho^2(\dd y),
\end{align*}
where the last equality is due to the last claim of
Proposition~\ref{prop:mu-properties}.
In other words, we have
\begin{multline*}
\iint_0^1 (y-y_*)^\top\nabla^2\Phi\bigl((1-t)y+ty_*\bigr)
(y-y_*)\dd t\,\rho^2(\dd y) \\
= \iint_0^1 (y - y_*)^\top
\nabla^2 \Phi\bigl((1-t)y+ty_*\bigr)
\bigl\langle\ell, \rho^{1|2}(\cdot|y) - m_*\bigr\rangle\dd t\,\rho^2(\dd y),
\end{multline*}
and this implies, by Cauchy--Schwarz,
\begin{align*}
\MoveEqLeft \iint_0^1 (y-y_*)^\top\nabla^2\Phi\bigl((1-t)y+ty_*\bigr)
(y-y_*)\dd t\,\rho^2(\dd y) \\
&\leqslant
\iint_0^1 \bigl\langle \ell, \rho^{1|2}(\cdot|y) - m_*\bigr\rangle^\top
\nabla^2\Phi\bigl((1-t)y+ty_*\bigr)
\bigl\langle \ell, \rho^{1|2}(\cdot|y) - m_*\bigr\rangle
\dd t\,\rho^2(\dd y) \\
&= \iint_0^1
\Bigl|\Bigl\langle \nabla^2 \Phi \bigl((1-t)y+ty_*\bigr)^{1/2}\ell,
\rho^{1|2}(\cdot|y) - m_*\Bigr\rangle\Bigr|^2
\dd t\,\rho^2(\dd y).
\end{align*}
As $\bigl|\nabla^2\Phi(y')^{1/2}
\nabla \ell(x)\bigr| \leqslant \sqrt{M_2}$
for all $y' \in \mathbb R^D$ and $x \in \mathbb R^d$,
we have, by the Kantorovich duality,
\begin{multline}
\label{eq:bound-rho2}
v(\rho^2) =
\iint_0^1 (y-y_*)^\top\nabla^2\Phi\bigl((1-t)y+ty_*\bigr)
(y-y_*)\dd t\,\rho^2(\dd y) \\
\leqslant M_2
\int W_1^2\bigl(\rho^{1|2}(\cdot|y), m_*\bigr) \rho^2(\dd y).
\end{multline}
Thus, using the fact that $W_1 \leqslant W_2$ and the inequality
\eqref{eq:bound-w2-pre}, we obtain
\[
v(\rho^2) \leqslant
2 M_2 C_\textnormal{LS} H(\rho | \hat \rho)
+ 2 \sqrt{M_2^3 C_\textnormal{LS}^3 v(\rho^2) H(\rho | \hat \rho)}.
\]
Introduce the ``adimensionalized'' variable
\[
\upsilon = \frac{v(\rho^2)}
{4M_2^3 C_\textnormal{LS}^3 H(\rho|\hat\rho)}.
\]
Then the inequality above reads
\[
\upsilon \leqslant \frac{1}{2M_2^2C_\textnormal{LS}^2}
+ \sqrt{\upsilon}
\leqslant \frac{1}{2M_2^2C_\textnormal{LS}^2}
+ \frac 12 + \frac 12 \upsilon.
\]
Hence, we get
\begin{equation}
\label{eq:upper-bound-v}
v(\rho^2) \leqslant
4 M_2 C_\textnormal{LS}
\bigl(M_2^2C_\textnormal{LS}^2 + 1\bigr) H(\rho | \hat \rho).
\end{equation}

\proofstep{Step 4:
Control of Wasserstein and TV distances by $H(\rho|\hat\rho)$}
By inserting \eqref{eq:upper-bound-v}
into \eqref{eq:bound-w2-pre}, and noting,
by the definition of the Wasserstein distance,
\[
W_2^2 (\rho^1, m_*) \leqslant
\int W_2^2 \bigl( \rho^{1|2}(\cdot | y), m_* \bigr) \rho^2(\dd y),
\]
we get
\begin{equation}
\label{eq:upper-bound-w}
W_2^2 (\rho^1, m_*)
\leqslant \Bigl( 2C_\textnormal{LS} + 4M_2 C_\textnormal{LS}^2
\bigl(M_2^2C_\textnormal{LS}^2 + 1\bigr)^{1/2} \Bigr) H(\rho|\hat \rho).
\end{equation}
For the total variation distance,
we observe that the Csiszár--Kullback--Pinsker inequality implies
\begin{align*}
\int \bigl\lVert \rho^{1|2}(\cdot | y) - \hat m_y \bigr\rVert_\textnormal{TV}^2
\rho^2(\dd y)
&\leqslant 2 H(\rho | \hat \rho), \\
\int \lVert \hat m_y - m_* \rVert_\textnormal{TV}^2
\rho^2(\dd y)
&\leqslant \int \bigl( H(\hat m_y | m_*) + H(m_* | \hat m_y) \bigr)
\rho^2(\dd y).
\end{align*}
By the triangle and Jensen's inequalities, we get
\begin{align*}
\lVert \rho^1 - m_* \rVert_\textnormal{TV}^2
&\leqslant \int \bigl\lVert \rho^{1|2}(\cdot | y)
- m_* \bigr\rVert_\textnormal{TV}^2 \rho^2(\dd y) \\
&\leqslant 2 \int \Bigl(
\bigl\lVert \rho^{1|2}(\cdot|y) - \hat m_y\bigr\rVert_\textnormal{TV}^2
+ \lVert \hat m_y - m_* \rVert_\textnormal{TV}^2 \Bigr) \rho^2(\dd y) \\
&\leqslant 4 H(\rho|\hat\rho)
+ 2 \int \bigl( H(\hat m_y | m_*) + H(m_* | \hat m_y)\bigr) \rho^2(\dd y).
\end{align*}
Then, by inserting \eqref{eq:upper-bound-v}
into \eqref{eq:bound-sym-entropy}, we get
\begin{equation}
\label{eq:upper-bound-TV}
\lVert \rho^1 - m_* \rVert_\textnormal{TV}^2
\leqslant \Bigl( 4 + 8M_2 C_\textnormal{LS}
\bigl(M_2^2C_\textnormal{LS}^2 + 1\bigr)^{1/2} \Bigr) H(\rho|\hat \rho).
\end{equation}

\proofstep{Step 5: Control of $H(\rho|\hat \rho)$ and conclusion
for the convex case}
In the case where
\[
M_1 \coloneqq \sup_{x \in \mathbb R^d, y \in \mathbb R^D}
\ell(x)^\top \nabla^2\Phi(y) \ell(x) < \infty,
\]
by Proposition~\ref{prop:rho-hat-rho-entropy} and
\eqref{eq:upper-bound-I-2} in Proposition~\ref{prop:upper-bound-I},
we immediately get
\[
H(\rho | \hat \rho) \leqslant
\frac{C_\textnormal{LS}}{2} (D + 2M_1) \lambda = H'.
\]

In the general case where $M_1$ is not necessarily finite,
Proposition \ref{prop:rho-hat-rho-entropy}
and \eqref{eq:upper-bound-I-1} in Proposition~\ref{prop:upper-bound-I} yield
\[
H(\rho | \hat \rho)
\leqslant
\frac{C_\textnormal{LS}}{2}
( D + 24 M_2 C_\textnormal{LS} d) \lambda
+ 24 M_2 C_\textnormal{LS} W_2^2(\rho^1, m_*) \lambda.
\]
Together with the upper bound \eqref{eq:upper-bound-w} of $W_2^2(\rho^1, m_*)$,
we get
\[
H(\rho|\hat \rho)
\leqslant
\frac{C_\textnormal{LS}(D + 24 M_2C_\textnormal{LS}d)\lambda}
{2 - 96 M_2 C_\textnormal{LS}^2
\Bigl( 1 + 2 M_2C_\textnormal{LS}
\bigl( M_2^2C_\textnormal{LS}^2+1\bigr)^{1/2} \Bigr) \lambda}
= H,
\]
for
\[
\lambda <
\frac{1}{48M_2 C_\textnormal{LS}^2
\Bigl( 1 + 2 M_2C_\textnormal{LS}
\bigl( M_2^2C_\textnormal{LS}^2+1\bigr)^{1/2} \Bigr)}
= \lambda_0.
\]
We obtain the desired estimates on $v(\rho^2)$, Wasserstein and TV distances,
by inserting the upper bounds of $H(\rho|\hat\rho)$
for the respective cases into
\eqref{eq:upper-bound-v}, \eqref{eq:upper-bound-w}, \eqref{eq:upper-bound-TV}.

\medskip
Now we work with a concave $\Phi$ and
prove the last claim of the theorem.

\proofstep{Step 6: Case of concave $\Phi$}
Observe first that the mapping $y \mapsto \nabla^2\Phi(y) \cdot \hat y$
is a gradient:
\begin{align*}
\nabla^2\Phi(y) \cdot \hat y &= \nabla^2\Phi(y) \cdot
\frac{\int \ell(x)\exp\bigl( -2\nabla\Phi(y) \cdot \ell(x)\bigr)\dd x}
{\int \exp\bigl( -2\nabla \Phi(y) \cdot \ell(x)\bigr)\dd x} \\
&= - \frac 12\nabla_y
\biggl(\log \int \exp \bigl( -2\nabla\Phi(y) \cdot \ell(x) \bigr) \dd x\biggr).
\end{align*}
This identity is analogous to the fact in thermodynamics
that when we derive the free energy with respect to a variable,
we get the statistical average of its response variable.
Moreover,
\[
\nabla^2\Phi(y)\cdot y=
\nabla_y \bigl( \nabla \Phi(y) \cdot y - \Phi(y) \bigr).
\]
Thus, taking the test function
\[
\varphi(y)
= - \frac 12 \log \int \exp \bigl( -2\nabla\Phi(y) \cdot \ell(x) \bigr) \dd x
- \nabla \Phi(y) \cdot y + \Phi(y)
\]
in Lemma~\ref{lem:grad-invariant}, we get
\[
\iint \bigl( \ell(x) - y \bigr)^\top
\nabla^2\Phi(y) ( \hat y - y) \rho(\dd x\dd y) = 0.
\]
Consequently,
\begin{align*}
\MoveEqLeft
-\iint (\hat y-y)^\top \nabla^2\Phi(y) (\hat y-y) \rho(\dd x\dd y) \\
&= - \iint \bigl(\hat y - \ell(x)\bigr)^\top
\nabla^2\Phi(y) (\hat y-y) \rho(\dd x\dd y) \\
&= \int \Bigl< \bigl(-\nabla^2\Phi(y)\bigr)^{1/2}
\ell, \hat m_y - \rho^{1|2}(\cdot|y) \Bigr>
\cdot \bigl(-\nabla^2\Phi(y)\bigr)^{1/2}
(\hat y - y) \rho^2(\dd y) \\
&\leqslant
\biggl(- \int ( \hat y-y)^\top
\nabla^2\Phi(y)
( \hat y-y) \rho^2(\dd y)\biggr)^{\!1/2}\\
&\hphantom{\leqslant{}}\quad\times
\biggl( \int \Bigl|\Bigl< \bigl(-\nabla^2\Phi(y)\bigr)^{1/2}
\ell, \hat m_y - \rho^{1|2}(\cdot|y) \Bigr>\Bigr|^2
 \rho^2(\dd y)\biggr)^{\!1/2},
\end{align*}
which implies that
\[
-\int (\hat y-y)^\top \nabla^2\Phi(y) (\hat y-y) \rho^2(\dd y)
\leqslant\int \Bigl|\Bigl< \bigl(-\nabla^2\Phi(y)\bigr)^{1/2}
\ell, \hat m_y - \rho^{1|2}(\cdot|y) \Bigr>\Bigr|^2
 \rho^2(\dd y).
\]
The term on the right satisfies
\begin{align*}
\int\Bigl|\Bigl< \bigl(-\nabla^2\Phi(y)\bigr)^{1/2}
\ell, \hat m_y - \rho^{1|2}(\cdot|y) \Bigr>\Bigr|^2
 \rho^2(\dd y)
&\leqslant M_2 \int W_1^2\bigl( \hat m_y, \rho^{1|2}(\cdot|y)\bigr)
\rho^2(\dd y) \\
&\leqslant M_2C_\textnormal{LS}
\int H\bigl(\rho^{1|2}(\cdot|y)\big|\hat m_y\bigr) \rho^2(\dd y)
\end{align*}
by the uniform LSI for $(\hat m_y)_{y \in \mathcal K}$.
The entropy estimate in Proposition~\ref{prop:rho-hat-rho-entropy} gives
\[
\int H\bigl(\rho^{1|2}(\cdot|y)\big|\hat m_y\bigr) \rho^2(\dd y)
\leqslant \frac{C_\textnormal{LS}}{2} (D + 2I) \lambda
\leqslant \frac{C_\textnormal{LS}D\lambda}{2},
\]
as in the case of concave $\Phi$,
the term $I \leqslant 0$ by its definition \eqref{eq:def-I}.
\qed

\appendix
\section{A lemma on Wasserstein duality}
\label{app:kantorovich-2-var}

\begin{lem}
\label{lem:kantorovich-2-var}
Let $f : \mathbb R^{d_1} \times \mathbb R^{d_2} \to \mathbb R$
be a $\mathcal C^2$ function such that
the Euclidean operator norm of $\nabla_x\nabla_yf(x,y)$ satisfies
\[
\lvert \nabla_x \nabla_y f(x,y) \rvert \leqslant M
\]
for all $x \in \mathbb R^{d_1}$ and $y \in \mathbb R^{d_2}$.
Then, for all $\mu$, $\mu' \in \mathcal P_2(\mathbb R^{d_1})$
and $\nu$, $\nu' \in \mathcal P_2(\mathbb R^{d_2})$, we have
\[
\biggl|\iint f(x,y) (\mu - \mu')(\dd x) (\nu - \nu')(\dd y)\biggr|
\leqslant M W_2(\mu, \mu') W_2(\nu, \nu').
\]
\end{lem}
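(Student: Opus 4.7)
The plan is to reduce the cross-integral to an expectation of a mixed second difference and then exploit the bound on $\nabla_x\nabla_y f$. Let $\pi \in \Pi(\mu,\mu')$ and $\sigma \in \Pi(\nu,\nu')$ be $W_2$-optimal couplings, and set
\[
\Delta f(x,x',y,y') \coloneqq f(x,y) - f(x',y) - f(x,y') + f(x',y').
\]
Using the identity $\int h(x)(\mu-\mu')(\dd x) = \iint \bigl(h(x)-h(x')\bigr)\pi(\dd x\dd x')$ once for each of the two signed measures, I would rewrite
\[
\iint f(x,y)(\mu-\mu')(\dd x)(\nu-\nu')(\dd y)
= \iiiint \Delta f(x,x',y,y')\,\pi(\dd x\dd x')\sigma(\dd y\dd y').
\]

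Next, two applications of the fundamental theorem of calculus give the representation
\[
\Delta f(x,x',y,y') = \int_0^1\!\!\int_0^1 (x-x')^\top \nabla_x\nabla_y f(x_s, y_t)(y-y')\,\dd s\dd t,
\]
with $x_s \coloneqq x'+s(x-x')$ and $y_t \coloneqq y'+t(y-y')$. The operator-norm bound $\lvert\nabla_x\nabla_y f\rvert\leqslant M$ together with Cauchy--Schwarz then yields the pointwise estimate $\lvert\Delta f(x,x',y,y')\rvert\leqslant M\lvert x-x'\rvert\,\lvert y-y'\rvert$. Plugging this into the integrated version and applying Cauchy--Schwarz again, now in the product of the two couplings,
\begin{align*}
\biggl|\iiiint \Delta f(x,x',y,y')\,\pi(\dd x\dd x')\sigma(\dd y\dd y')\biggr|
&\leqslant M \biggl(\iint\lvert x-x'\rvert^2 \pi(\dd x\dd x')\biggr)^{\!1/2} \\
&\hphantom{\leqslant{}}\quad \times \biggl(\iint\lvert y-y'\rvert^2 \sigma(\dd y\dd y')\biggr)^{\!1/2} \\
&= M\,W_2(\mu,\mu')\,W_2(\nu,\nu'),
\end{align*}
where the last equality uses the $W_2$-optimality of $\pi$ and $\sigma$.

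The only minor technical point is to ensure that the original double integral is well defined, since $f$ itself is not assumed integrable against the product measures --- only $\nabla_x\nabla_y f$ is bounded. This is handled by replacing $f$ with $g(x,y) \coloneqq f(x,y)-f(x,0)-f(0,y)+f(0,0)$; the subtracted pieces depend on only one variable and hence vanish against $\mu-\mu'$ or $\nu-\nu'$, leaving the cross integral unchanged, while $\lvert g(x,y)\rvert\leqslant M\lvert x\rvert\,\lvert y\rvert$ by the same Taylor computation, which is integrable as soon as $\mu,\mu',\nu,\nu'$ have finite first moments (a consequence of their having finite second moments). No step presents a serious obstacle: the statement is essentially a tensor-product version of Kantorovich duality, and the crux is the mixed-derivative identity for $\Delta f$.
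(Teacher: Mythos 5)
Your proposal is correct and follows essentially the same route as the paper: express the cross-integral as an expectation (or $\pi\otimes\sigma$-integral) of the mixed second difference $\Delta f$, represent $\Delta f$ by a double application of the fundamental theorem of calculus to isolate $\nabla_x\nabla_y f$, and conclude by Cauchy--Schwarz with the $W_2$-optimality of the couplings. The extra remark on well-definedness of the integral (via the normalization $g(x,y)=f(x,y)-f(x,0)-f(0,y)+f(0,0)$) is a welcome technical addition that the paper leaves implicit.
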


\begin{proof}[Proof of Lemma~\ref{lem:kantorovich-2-var}]
Let $\pi \in \Pi(\mu, \mu')$ be the $W_2$-optimal transport plan
between $\mu$ and $\mu'$,
and $\pi' \in \Pi(\nu, \nu')$ be that between $\nu$ and $\nu'$.
Construct the random variable $\bigl((X, X'), (Y, Y')\bigr)$
distributed as $\pi \otimes \pi'$.
Then, we have
\begin{align*}
\MoveEqLeft \iint f(x,y) (\mu - \mu')(\dd x) (\nu - \nu')(\dd y) \\
&= \Expect [ f(X,Y) - f(X', Y) - f(X,Y') + f(X',Y')] \\
&= \Expect \biggl[ \iint_{[0,1]^2} (X-X')^\top \\
&\hphantom{{}= \Expect \biggl[ \iint_{[0,1]^2}}\quad
\nabla_x \nabla_y f \bigl( (1-t)X + tX', (1-s)Y + sY' \bigr)
(Y-Y') \dd t \dd s \biggr].
\end{align*}
Therefore, taking absolute values, we get
\begin{align*}
\MoveEqLeft \biggl|\iint f(x,y)
(\mu - \mu')(\dd x) (\nu - \nu')(\dd y)\biggr| \\
&\leqslant M \Expect \bigl[ \lvert X - X'\rvert \lvert Y - Y' \rvert \bigr] \\
&\leqslant M \Expect \bigl[ \lvert X - X'\rvert^2 \bigr]^{1/2}
\Expect \bigl[ \lvert Y - Y' \rvert^2 \bigr]^{1/2} \\
&= M W_2(\mu, \mu') W_2(\nu, \nu'),
\end{align*}
which concludes the proof.
\end{proof}

\section{Algorithm}
\label{sec:algorithm}

\begin{algorithm}[H]
\caption{Training two-layer neural network by self-interacting diffusions}
\SetKwInOut{Input}{Input}
\SetKwInOut{Output}{Output}

\Input{Activation func.\ $\varphi$,
truncation func.\ $\tau$,
dataset $\{z_k,y_k\}^K_{k=1}$,
volatility $\sigma$,
regularization constant $\gamma$,
initial distribution $m_0$,
time step $\delta t$,
time horizon $T$,
non-increasing piecewise constant func.\ %
$\lambda:[0,T]\to \mathbb (0,\infty)$}
\Output{$Y_T$}
generate $X_0=(C_0,A_0,B_0)\sim m_0$; $Y_0=l(X_0)$\;
\For{$t=0$, $\delta t$, $2\delta t$, \dots, $T-\delta t$}{
generate i.i.d.\ $\mathcal{N}_t\sim \mathcal{N}(0,1)$\;
$X_{t+\delta t}\leftarrow X_t
- \big(\frac{1}{K}\sum_{k=1}^K Y_t^k\nabla f(X_t,z_k)
+\sigma^2\gamma X_t\big)\delta t +\sigma\sqrt{\delta t}\mathcal{N}_t$\;
\If{$\lambda(t)\equiv \lambda_*$ is constant function}
{$Y_{t+\delta t}\leftarrow (1-\lambda_* \delta t)Y_t
+ \lambda_* \delta t \,\ell(X_{t+\delta t})$\;
\tcp{Update corresponding to fixed value of $\lambda$}}
\Else {$Y_{t+\delta t}\leftarrow \bigl(1-\lambda(t) \delta t\bigr)Y_t
+ \lambda(t) \delta t \,\ell(X_{t+\delta t})$\;
\tcp{Update corresponding to annealing $\lambda(t)
=\lambda_\textnormal{a}(t)$}}
}

\end{algorithm}

\paragraph{Acknowledgements.}
The authors thank Fan Chen and Xiaozhen Wang for their assistance
with the numerical implementation,
and Yupeng Bai for identifying an error in Theorem~\ref{thm:si-contraction}
in a previous version of the paper.

\paragraph{Funding.}
The first author's research is supported
by the National Natural Science Foundation of China (No.~12222103),
and by the National Key R{\&}D Program of China (No.~2022ZD0116401).
The second author's research is supported by
Finance For Energy Market Research Centre.
The third author's research is supported by
the European Union’s Horizon 2020 research and innovation programme
under the Marie Skłodowska-Curie grant agreement No.~945332.

\printbibliography

@article{uklpoc,
	title={Uniform-in-time propagation of chaos for kinetic mean field
{Langevin} dynamics},
	author={Fan Chen and Yiqing Lin and Zhenjie Ren and Songbo Wang},
	fjournal={Electronic Journal of Probability},
	shortjournal={Electron.\ J.\ Probab.},
	journal={Electron.\ J.\ Probab.},
	year={2024},
	volume={29},
	pagetotal={43},
	eid={17},
	doi={10.1214/24-EJP1079}
}

@article{ulpoc,
	title={Uniform-in-time propagation of chaos for mean field {Langevin} dynamics},
	author={Fan Chen and Zhenjie Ren and Songbo Wang},
	year={2024},
	journal={Ann.\ Inst.\ Henri Poincaré, Probab.\ Stat.},
	eprint={2212.03050},
	archivePrefix={arXiv},
	primaryClass={math.PR},
	pubstate={forthcoming}
}

@article{efp,
	author={Fan Chen and Zhenjie Ren and Songbo Wang},
	title={Entropic Fictitious Play for Mean Field Optimization Problem},
	fjournal={Journal of Machine Learning Research},
	shortjournal={J.\ Mach.\ Learn.\ Res.},
	journal={J.\ Mach.\ Learn.\ Res.},
	year={2023},
	volume={24},
	eid={211},
	pagetotal={36},
	url={http://jmlr.org/papers/v24/22-0411.html}
}

@Article{LLFSharp,
 Author = {Lacker, Daniel and Le Flem, Luc},
 Title = {Sharp uniform-in-time propagation of chaos},
 FJournal = {Probability Theory and Related Fields},
 Journal = {Probab. Theory Relat. Fields},
 ISSN = {0178-8051},
 Volume = {187},
 Number = {1-2},
 Pages = {443--480},
 Year = {2023},
 Language = {English},
 DOI = {10.1007/s00440-023-01192-x},
 Keywords = {82C22,60H10},
 zbMATH = {7735855}
}

@Article{HolleyStroockLSI,
 Author = {Holley, Richard and Stroock, Daniel},
 Title = {Logarithmic {Sobolev} inequalities and stochastic {Ising} models},
 FJournal = {Journal of Statistical Physics},
 Journal = {J. Stat. Phys.},
 ISSN = {0022-4715},
 Volume = {46},
 Number = {5-6},
 Pages = {1159--1194},
 Year = {1987},
 Language = {English},
 DOI = {10.1007/BF01011161},
 Keywords = {60K35,82B05},
 zbMATH = {4117639},
 Zbl = {0682.60109}
}

@Article{EberleReflectionCoupling,
 Author = {Eberle, Andreas},
 Title = {Reflection couplings and contraction rates for diffusions},
 FJournal = {Probability Theory and Related Fields},
 Journal = {Probab. Theory Relat. Fields},
 ISSN = {0178-8051},
 Volume = {166},
 Number = {3-4},
 Pages = {851--886},
 Year = {2016},
 Language = {English},
 DOI = {10.1007/s00440-015-0673-1},
 Keywords = {60J60,60H10},
 zbMATH = {6657178},
 Zbl = {1367.60099}
}

@article{EGZCoupling,
 Author = {Eberle, Andreas and Guillin, Arnaud and Zimmer, Raphael},
 Title = {Couplings and quantitative contraction rates for {Langevin} dynamics},
 FJournal = {The Annals of Probability},
 Journal = {Ann. Probab.},
 ISSN = {0091-1798},
 Volume = {47},
 Number = {4},
 Pages = {1982--2010},
 Year = {2019},
 Language = {English},
 DOI = {10.1214/18-AOP1299},
 Keywords = {60J60,60H10,35Q84,35B40},
 zbMATH = {7114709},
 Zbl = {1466.60160}
}

@Article{HRSS,
 Author = {Hu, Kaitong and Ren, Zhenjie and {\v{S}}i{\v{s}}ka, David and Szpruch, {\L}ukasz},
 Title = {Mean-field {Langevin} dynamics and energy landscape of neural networks},
 FJournal = {Annales de l'Institut Henri Poincar{\'e}. Probabilit{\'e}s et Statistiques},
 Journal = {Ann. Inst. Henri Poincar{\'e}, Probab. Stat.},
 ISSN = {0246-0203},
 Volume = {57},
 Number = {4},
 Pages = {2043--2065},
 Year = {2021},
 Language = {English},
 DOI = {10.1214/20-AIHP1140},
 Keywords = {65C30,60H30,35Q89},
 zbMATH = {7481278},
 Zbl = {1492.65023}
}

@Book{AGSGradientFlows,
 Author = {Ambrosio, Luigi and Gigli, Nicola and Savar{\'e}, Giuseppe},
 Title = {Gradient flows in metric spaces and in the space of probability measures},
 Edition = {2nd ed.},
 ISBN = {978-3-7643-8721-1},
 Year = {2008},
 Publisher = {Birkh{\"a}user},
 Address = {Basel},
 Language = {English},
 Keywords = {35-02,49-02,49J40,28A33,35K55,47H05,65M15},
 zbMATH = {5233008},
 Zbl = {1145.35001}
}

@article{MMNMF,
 Author = {Mei, Song and Montanari, Andrea and Nguyen, Phan-Minh},
 Title = {A mean field view of the landscape of two-layer neural networks},
 FJournal = {Proceedings of the National Academy of Sciences of the United States of America},
 Journal = {Proc. Natl. Acad. Sci. USA},
 ISSN = {0027-8424},
 Volume = {115},
 Number = {33},
 Pages = {e7665--e7671},
 Year = {2018},
 Language = {English},
 DOI = {10.1073/pnas.1806579115},
 Keywords = {92B20,62M45},
 zbMATH = {7076238},
 Zbl = {1416.92014}
}

@article{GuillinMonmarcheKinetic,
 Author = {Guillin, Arnaud and Monmarch{\'e}, Pierre},
 Title = {Uniform long-time and propagation of chaos estimates for mean field kinetic particles in non-convex landscapes},
 FJournal = {Journal of Statistical Physics},
 Journal = {J. Stat. Phys.},
 ISSN = {0022-4715},
 Volume = {185},
 Number = {2},
 Pages = {20},
 eid = {15},
 Year = {2021},
 Language = {English},
 DOI = {10.1007/s10955-021-02839-6},
 Keywords = {82B40,60J60,35K58,35Q83,35Q84},
 zbMATH = {7445218}
}

@article{DelarueTseUniformPOC,
      title={Uniform in time weak propagation of chaos on the torus},
      author={François Delarue and Alvin Tse},
      year={2024},
      eprint={2104.14973},
      archivePrefix={arXiv},
      primaryClass={math.AP},
      journal={Ann. Inst. Henri Poincaré, Probab. Stat.},
      pubstate={forthcoming}
}

@article{DEGZElementary,
 Author = {Durmus, Alain and Eberle, Andreas and Guillin, Arnaud and Zimmer, Raphael},
 Title = {An elementary approach to uniform in time propagation of chaos},
 FJournal = {Proceedings of the American Mathematical Society},
 Journal = {Proc. Am. Math. Soc.},
 ISSN = {0002-9939},
 Volume = {148},
 Number = {12},
 Pages = {5387--5398},
 Year = {2020},
 Language = {English},
 DOI = {10.1090/proc/14612},
 Keywords = {60J60,60H10},
 zbMATH = {7268404},
 Zbl = {1471.60123}
}

@article{DurmusMoulinesNonasymptotic,
 Author = {Durmus, Alain and Moulines, {\'E}ric},
 Title = {Nonasymptotic convergence analysis for the unadjusted {Langevin} algorithm},
 FJournal = {The Annals of Applied Probability},
 Journal = {Ann. Appl. Probab.},
 ISSN = {1050-5164},
 Volume = {27},
 Number = {3},
 Pages = {1551--1587},
 Year = {2017},
 Language = {English},
 DOI = {10.1214/16-AAP1238},
 Keywords = {65C30,65C05,65C40,60H10,34F05,60H35},
 zbMATH = {6775356},
 Zbl = {1377.65007}
}

@article{RobertsRosenthal,
 Author = {Roberts, Gareth O. and Rosenthal, Jeffrey S.},
 Title = {Optimal scaling of discrete approximations to {Langevin} diffusions},
 FJournal = {Journal of the Royal Statistical Society. Series B. Statistical Methodology},
 Journal = {J. R. Stat. Soc., Ser. B, Stat. Methodol.},
 ISSN = {1369-7412},
 Volume = {60},
 Number = {1},
 Pages = {255--268},
 Year = {1998},
 Language = {English},
 DOI = {10.1111/1467-9868.00123},
 Keywords = {60J20,65C05},
 zbMATH = {1212515},
 Zbl = {0913.60060}
}

@article{RobertsTweedie,
 Author = {Roberts, Gareth O. and Tweedie, Richard L.},
 Title = {Exponential convergence of {Langevin} distributions and their discrete approximations},
 FJournal = {Bernoulli},
 Journal = {Bernoulli},
 ISSN = {1350-7265},
 Volume = {2},
 Number = {4},
 Pages = {341--363},
 Year = {1996},
 Language = {English},
 DOI = {10.2307/3318418},
 Keywords = {60F99,60J60,65C10},
 zbMATH = {1005408},
 Zbl = {0870.60027}
}

@Article{BirkhoffErgodic,
 Author = {Birkhoff, George D.},
 Title = {Proof of the ergodic theorem},
 FJournal = {Proceedings of the National Academy of Sciences of the United States of America},
 Journal = {Proc. Natl. Acad. Sci. USA},
 ISSN = {0027-8424},
 Volume = {17},
 Pages = {656--660},
 Year = {1931},
 Language = {English},
 DOI = {10.1073/pnas.17.12.656},
 Keywords = {37Axx},
 zbMATH = {3003496},
 Zbl = {0003.25602}
}

@article{CKRGame,
author = {Conforti, Giovanni and Kazeykina, Anna and Ren, Zhenjie},
title = {Game on Random Environment, Mean-Field {Langevin} System, and Neural Networks},
journal = {Math. Oper. Res.},
volume = {48},
number = {1},
pages = {78--99},
year = {2023},
doi = {10.1287/moor.2022.1252}
}

@article{ChizatMFL,
title={Mean-Field {Langevin} Dynamics: {Exponential} Convergence and Annealing},
author={L{\'e}na{\"i}c Chizat},
journal={Trans. Mach. Learn. Res.},
issn={2835-8856},
year={2022},
url={https://openreview.net/forum?id=BDqzLH1gEm},
}

@article{NWSPDA,
 Author = {Nitanda, Atsushi and Wu, Denny and Suzuki, Taiji},
 Title = {Particle dual averaging: optimization of mean field neural network with global convergence rate analysis},
 FJournal = {Journal of Statistical Mechanics: Theory and Experiment},
 Journal = {J. Stat. Mech. Theory Exp.},
 ISSN = {1742-5468},
 Volume = {2022},
 Number = {11},
 Pages = {51},
 eid = {114010},
 Year = {2022},
 Language = {English},
 DOI = {10.1088/1742-5468/ac98a8},
 Keywords = {82-XX},
 zbMATH = {7632727}
}

@article{KuramotoOscillators,
title = {Rhythms and turbulence in populations of chemical oscillators},
journal = {Phys. A, Stat. Mech. Appl.},
volume = {106},
number = {1},
pages = {128--143},
year = {1981},
issn = {0378-4371},
doi = {10.1016/0378-4371(81)90214-4},
author = {Yoshiki Kuramoto},
}

@Article{MartzelAslangul,
 Author = {Martzel, Nicolas and Aslangul, Claude},
 Title = {Mean-field treatment of the many-body {Fokker}-{Planck} equation},
 FJournal = {Journal of Physics A: Mathematical and General},
 Journal = {J. Phys. A, Math. Gen.},
 ISSN = {0305-4470},
 Volume = {34},
 Number = {50},
 Pages = {11225--11240},
 Year = {2001},
 Language = {English},
 DOI = {10.1088/0305-4470/34/50/305},
 Keywords = {82C31,82C35},
 zbMATH = {1754002},
 Zbl = {0993.82036}
}

@Article{KellerSegelChemotaxis,
 Author = {Keller, Evelyn F. and Segel, Lee A.},
 Title = {Model for chemotaxis},
 FJournal = {Journal of Theoretical Biology},
 Journal = {J. Theor. Biol.},
 ISSN = {0022-5193},
 Volume = {30},
 Number = {2},
 Pages = {225--234},
 Year = {1971},
 Language = {English},
 DOI = {10.1016/0022-5193(71)90050-6},
 Keywords = {92C17,60J70},
 zbMATH = {5607960},
 Zbl = {1170.92307}
}

@Article{CMVKinetic,
 Author = {Carrillo, Jos{\'e} A. and McCann, Robert J. and Villani, C{\'e}dric},
 Title = {Kinetic equilibration rates for granular media and related equations: entropy dissipation and mass transportation estimates},
 FJournal = {Revista Matem{\'a}tica Iberoamericana},
 Journal = {Rev. Mat. Iberoam.},
 ISSN = {0213-2230},
 Volume = {19},
 Number = {3},
 Pages = {971--1018},
 Year = {2003},
 Language = {English},
 DOI = {10.4171/RMI/376},
 Keywords = {35K65,35K55,35B40},
 zbMATH = {2109873},
 Zbl = {1073.35127}
}

@Article{BGGGranular,
 Author = {Bolley, Fran{\c{c}}ois and Gentil, Ivan and Guillin, Arnaud},
 Title = {Uniform convergence to equilibrium for granular media},
 FJournal = {Archive for Rational Mechanics and Analysis},
 Journal = {Arch. Ration. Mech. Anal.},
 ISSN = {0003-9527},
 Volume = {208},
 Number = {2},
 Pages = {429--445},
 Year = {2013},
 Language = {English},
 DOI = {10.1007/s00205-012-0599-z},
 Keywords = {35B40,35R60,35R09},
 zbMATH = {6161066},
 Zbl = {1264.35040}
}

@Article{BCCPGranular,
 Author = {Benedetto, Dario and Caglioti, Emanuele and Carrillo, Jos{\'e} A. and Pulvirenti, Mario},
 Title = {A non-{Maxwellian} steady distribution for one-dimensional granular media},
 FJournal = {Journal of Statistical Physics},
 Journal = {J. Stat. Phys.},
 ISSN = {0022-4715},
 Volume = {91},
 Number = {5-6},
 Pages = {979--990},
 Year = {1998},
 Language = {English},
 DOI = {10.1023/A:1023032000560},
 Keywords = {60H15,82C31,74A99},
 zbMATH = {1279493},
 Zbl = {0921.60057}
}

@Article{DJLEmpApprox,
 Author = {Du, Kai and Jiang, Yifan and Li, Jinfeng},
 Title = {Empirical approximation to invariant measures for {McKean}--{Vlasov} processes: mean-field interaction vs self-interaction},
 FJournal = {Bernoulli},
 Journal = {Bernoulli},
 ISSN = {1350-7265},
 Volume = {29},
 Number = {3},
 Pages = {2492--2518},
 Year = {2023},
 Language = {English},
 DOI = {10.3150/22-BEJ1550},
 Keywords = {60B05,60J60,60H10,60H35,60F99,60F05},
 zbMATH = {7691590}
}

@Book{BKRSFPKEq,
 Author = {Bogachev, Vladimir I. and Krylov, Nicolai V. and R{\"o}ckner, Michael and Shaposhnikov, Stanislav V.},
 Title = {Fokker--{Planck}--{Kolmogorov} equations},
 FSeries = {Mathematical Surveys and Monographs},
 Series = {Math. Surv. Monogr.},
 ISSN = {0076-5376},
 Volume = {207},
 ISBN = {978-1-4704-2558-6},
 Year = {2015},
 Publisher = {American Mathematical Society},
 Address = {Providence, RI},
 Language = {English},
 DOI = {10.1090/surv/207},
 Keywords = {35-02,35Q84,35R06},
 zbMATH = {6532864},
 Zbl = {1342.35002}
}

@Article{WangLandauType,
 Author = {Wang, Feng-Yu},
 Title = {Distribution dependent {SDEs} for {Landau} type equations},
 FJournal = {Stochastic Processes and their Applications},
 Journal = {Stochastic Process. Appl.},
 ISSN = {0304-4149},
 Volume = {128},
 Number = {2},
 Pages = {595--621},
 Year = {2018},
 Language = {English},
 DOI = {10.1016/j.spa.2017.05.006},
 Keywords = {60J75,47G20,60G52},
 zbMATH = {6824510},
 Zbl = {1380.60077}
}

@online{DJLSequentialPOC,
      title={Sequential propagation of chaos},
      author={Kai Du and Yifan Jiang and Xiaochen Li},
      year={2023},
      eprint={2301.09913},
      archivePrefix={arXiv},
      primaryClass={math.PR}
}

@online{DMRTVHighDimGaussians,
      title={The total variation distance between high-dimensional Gaussians with the same mean},
      author={Luc Devroye and Abbas Mehrabian and Tommy Reddad},
      year={2023},
      eprint={1810.08693},
      archivePrefix={arXiv},
      primaryClass={math.ST}
}

@article{FlemingViot,
 Author = {Fleming, Wendell H. and Viot, Michel},
 Title = {Some measure-valued {Markov} processes in population genetics theory},
 FJournal = {Indiana University Mathematics Journal},
 Journal = {Indiana Univ. Math. J.},
 ISSN = {0022-2518},
 Volume = {28},
 Pages = {817--843},
 Year = {1979},
 Language = {English},
 DOI = {10.1512/iumj.1979.28.28058},
 Keywords = {60J60,92D10},
 zbMATH = {3694306},
 Zbl = {0444.60064}
}

@article{BLRSIDiffusions,
 Author = {Bena{\"i}m, Michel and Ledoux, Michel and Raimond, Olivier},
 Title = {Self-interacting diffusions.},
 FJournal = {Probability Theory and Related Fields},
 Journal = {Probab. Theory Relat. Fields},
 ISSN = {0178-8051},
 Volume = {122},
 Number = {1},
 Pages = {1--41},
 Year = {2002},
 Language = {English},
 DOI = {10.1007/s004400100161},
 Keywords = {60K35,58J65,60J60},
 zbMATH = {1885537},
 Zbl = {1042.60060}
}

@article{CranstonLeJanSIDiffusions,
 Author = {Cranston, Michael and Le Jan, Yves},
 Title = {Self attracting diffusions: {Two} case studies},
 FJournal = {Mathematische Annalen},
 Journal = {Math. Ann.},
 ISSN = {0025-5831},
 Volume = {303},
 Number = {1},
 Pages = {87--93},
 Year = {1995},
 Language = {English},
 DOI = {10.1007/BF01460980},
 Keywords = {60H10},
 zbMATH = {840175},
 Zbl = {0838.60052}
}

@article{RaimondSIDiffusions,
 Author = {Raimond, Olivier},
 Title = {Self-attracting diffusions: {Case} of the constant interaction},
 FJournal = {Probability Theory and Related Fields},
 Journal = {Probab. Theory Relat. Fields},
 ISSN = {0178-8051},
 Volume = {107},
 Number = {2},
 Pages = {177--196},
 Year = {1997},
 Language = {English},
 DOI = {10.1007/s004400050082},
 Keywords = {60H10,60K35},
 zbMATH = {1002143},
 Zbl = {0881.60055}
}

@inproceedings{ChizatBachGlobal,
 author = {Chizat, L\'{e}na{\"i}c and Bach, Francis},
 booktitle = {Adv. Neural Inf. Process. Syst.},
 volume = {31},
 pages = {3036--3046},
 publisher = {Curran Associates, Inc.},
 title = {On the Global Convergence of Gradient Descent for Over-parameterized Models using Optimal Transport},
 url = {https://proceedings.neurips.cc/paper_files/paper/2018/file/a1afc58c6ca9540d057299ec3016d726-Paper.pdf},
 year = {2018}
}

@inproceedings{SNWUniform,
title={Uniform-in-time propagation of chaos for the mean-field gradient {Langevin} dynamics},
author={Taiji Suzuki and Atsushi Nitanda and Denny Wu},
booktitle={11th Int. Conf. Learn. Represent.},
year={2023},
url={https://openreview.net/forum?id=_JScUk9TBUn}
}

@Book{LedouxTalagrandProbaInBanach,
 Author = {Ledoux, Michel and Talagrand, Michel},
 Title = {Probability in {Banach} spaces. {Isoperimetry} and processes},
 FSeries = {Ergebnisse der Mathematik und ihrer Grenzgebiete. 3. Folge},
 Series = {Ergeb. Math. Grenzgeb., 3. Folge},
 ISSN = {0071-1136},
 Volume = {23},
 ISBN = {3-540-52013-9},
 Year = {1991},
 Publisher = {Springer-Verlag},
 Address = {Berlin etc.},
 Language = {English},
 Keywords = {60Bxx,60-02,60G15,60G17,60G50,46Bxx,28C20,49R50,52A40,62G30,60E15},
 zbMATH = {49190},
 Zbl = {0748.60004}
}

@InProceedings{NWSConvexMFL,
  title = 	 { Convex Analysis of the Mean Field {Langevin} Dynamics },
  author =       {Nitanda, Atsushi and Wu, Denny and Suzuki, Taiji},
  booktitle = 	 {Proc. 25th Int. Conf. Artif. Intell. Stat.},
  pages = 	 {9741--9757},
  volume = 	 {151},
  series = 	 {Proc. Mach. Learn. Res.},
  year = {2022},
  publisher =    {PMLR},
  pdf = 	 {https://proceedings.mlr.press/v151/nitanda22a/nitanda22a.pdf},
  url = 	 {https://proceedings.mlr.press/v151/nitanda22a.html},
  abstract = 	 { As an example of the nonlinear Fokker-Planck equation, the mean field Langevin dynamics recently attracts attention due to its connection to (noisy) gradient descent on infinitely wide neural networks in the mean field regime, and hence the convergence property of the dynamics is of great theoretical interest. In this work, we give a concise and self-contained convergence rate analysis of the mean field Langevin dynamics with respect to the (regularized) objective function in both continuous and discrete time settings. The key ingredient of our proof is a proximal Gibbs distribution $p_q$ associated with the dynamics, which, in combination with techniques in Vempala and Wibisono (2019), allows us to develop a simple convergence theory parallel to classical results in convex optimization. Furthermore, we reveal that $p_q$ connects to the duality gap in the empirical risk minimization setting, which enables efficient empirical evaluation of the algorithm convergence. }
}

@incollection{BakryEmeryHypercontractives,
 Author = {Bakry, Dominique and {\'E}mery, Michel},
 Title = {Diffusions hypercontractives},
 Year = {1985},
 Language = {French},
 booktitle = {S{\'e}min. de Probab. {XIX}},
 volume = {1123},
 series = {{Lect}. {Notes} {Math}.},
 publisher = {Springer-Verlag},
 address = {Berlin etc.}
}

@Article{BRSIDiffusions2,
 Author = {Bena{\"i}m, Michel and Raimond, Olivier},
 Title = {Self-interacting diffusions. {II}: {Convergence} in law.},
 FJournal = {Annales de l'Institut Henri Poincar{\'e}. Probabilit{\'e}s et Statistiques},
 Journal = {Ann. Inst. Henri Poincar{\'e}, Probab. Stat.},
 ISSN = {0246-0203},
 Volume = {39},
 Number = {6},
 Pages = {1043--1055},
 Year = {2003},
 Language = {English},
 Keywords = {60K35,58J65,60H10,60J60,60F05},
 zbMATH = {2003846},
 Zbl = {1064.60191}
}

@Article{BRSIDiffusions3,
 Author = {Bena{\"i}m, Michel and Raimond, Olivier},
 Title = {Self-interacting diffusions. {III}: {Symmetric} interactions},
 FJournal = {The Annals of Probability},
 Journal = {Ann. Probab.},
 ISSN = {0091-1798},
 Volume = {33},
 Number = {5},
 Pages = {1716--1759},
 Year = {2005},
 Language = {English},
 DOI = {10.1214/009117905000000251},
 Keywords = {60K35,37C50,60H10,37B25},
 zbMATH = {2228502},
 Zbl = {1085.60073}
}

@Article{BRSIDiffusions4,
 Author = {Bena{\"i}m, Michel and Raimond, Olivier},
 Title = {Self-interacting diffusions. {IV}: {Rate} of convergence},
 FJournal = {Electronic Journal of Probability},
 Journal = {Electron. J. Probab.},
 ISSN = {1083-6489},
 Volume = {16},
 Pages = {1815--1843},
 eid = {66},
 Year = {2011},
 Language = {English},
 DOI = {10.1214/EJP.v16-948},
 Keywords = {60K35,60H10,62L20,60F05,60J60},
 zbMATH = {6049122},
 Zbl = {1245.60089}
}

@Article{KKErgodicitySI,
 Author = {Kleptsyn, Victor and Kurtzmann, Aline},
 Title = {Ergodicity of self-attracting motion},
 FJournal = {Electronic Journal of Probability},
 Journal = {Electron. J. Probab.},
 ISSN = {1083-6489},
 Volume = {17},
 Pages = {37},
 eid = {50},
 Year = {2012},
 Language = {English},
 DOI = {10.1214/EJP.v17-2121},
 Keywords = {60K35,60H10},
 zbMATH = {6098133},
 Zbl = {1261.60093}
}

\end{document}